\numberwithin{equation}{section}
\newtheorem{theorem}{Theorem}[section]
\theoremstyle{definition}
\newtheorem{remark}[theorem]{Remark}
\theoremstyle{remarks}
\newtheorem{thm}{Theorem}
\newtheorem{lem}[thm]{Lemma}
\newtheorem{prop}[thm]{Proposition}
\newtheorem{defi}[thm]{Definition}
\newtheorem{rem}[thm]{Remark}
\newtheorem{sch}[thm]{Scheme}
\def\RR{{\mathbb R}}
\title{Selection of equilibria in a linear quadratic mean-field game}
\author{Fran\c{c}ois Delarue, Rinel Foguen Tchuendom}
\begin{document}
\maketitle



\begin{abstract}
In this paper, we address an instance of uniquely solvable mean-field game with a common noise whose corresponding counterpart without common noise has several equilibria. We study the selection problem for this mean-field game without common noise via three approaches. 

A common approach is to select, amongst all the equilibria, those yielding the minimal cost for the representative player. Another one is to select equilibria that are included in the support of the zero noise limit of the mean-field game with common noise. A last one is to select equilibria supported by the limit of the mean-field component of the corresponding $N$-player game as the number of players goes to infinity. The contribution of this paper is to show that, for the class under study, the last two approaches select the same equilibria, but the first approach selects another one.  
\vskip 4pt


\noindent \textbf{2010 MSC.} Primary: 60F17, 60H10, 91A13, 91A15. Secondary: 35L65
\vskip 4pt

\noindent \textbf{Keywords}: Mean-field game, Linear-quadratic control problem, Common noise, $N$-player game, Selection of equilibria, Peano phenomenon, Vanishing viscosity, Transition point, Scalar conservation law, Burgers equation, Entropy solution.

\end{abstract}

\section{Introduction}

\subsection{A short overview of MFGs}

The theory of Mean Field Games (MFGs for short) is concerned with the study of asymptotic Nash equilibria for stochastic differential games with an infinite number of players subject to a mean-field interaction (i.e each player is affected by the other players only through the empirical distribution of the system).
In this regard, it is worth recalling that a Nash equilibrium constitutes a consensus (or compromise) between all the players from which no player has unilateral incentive to deviate.

As the number of players (which we denote by the upper case $N$ throughout the paper) of the stochastic differential game increases, finding Nash equilibria becomes an increasingly complex problem as it typically involves a system of $N$ PDEs set on a space of dimension of order $N$. The motivation for studying the asymptotic regime is to reduce the underlying complexity. At least in the case where the players are driven by independent noises, the hope is indeed to take benefit from the theory of propagation of chaos for mean-field interacting systems (see for example \cite{Sznitman}) in order to reduce the analysis of the whole system to the analysis of a single representative player. 

In the analysis of the limiting MFGs, the representative player aims at minimizing a cost functional while interacting with an environment described by a flow of distributions. Finding Nash equilibria thus consists in finding optimal states whose flow of marginal distributions matches exactly the flow of distributions describing the environment. This is a constraint of McKean-Vlasov type which requires to solve a fixed point problem over the set of time-dependent paths with values in the space of probability measures. 

MFGs were introduced independently and simultaneously by Lasry and Lions \cite{MFG1,MFG2,MFG3} and  by Caines, Huang and Malham\'e \cite{HuangCainesMalhame2} (who used the name of Nash Certainty Equivalence). We refer to the notes \cite{Cardaliaguet} written by Cardaliaguet for a very good introduction to the subject. We also refer to the works of Carmona and Delarue, who studied MFGs with a probabilistic approach, see for instance \cite{CarmonaDelarue_sicon,CarmonaDelarueLachapelle} together with the recent two-volume monograph \cite{CarmonaDelarue_book_I,CarmonaDelarue_book_II}. Many other authors have contributed to the rapid development of the theory, see the references in \cite{CarmonaDelarue_book_I,CarmonaDelarue_book_II}. Under suitable regularity conditions of the cost functional, existence of Nash equilibria has been proved in the above works (for instance by using Schauder's fixed point argument). Further  monotonicity conditions introduced by Lasry and Lions guarantee uniqueness of the solution, see \cite{MFG3}.

\subsection{A specific class of MFGs}
In the earlier paper \cite{Foguen}, Foguen Tchuendom investigated a class of Linear-Quadratic Mean Field Games (LQ-MFGs) in which the representative player at equilibrium interacts with the mean of its distribution. Here and below, we call LQ-MFG a mean-field game whose cost functionals are quadratic in the state and control variables and whose dynamics is linear in the state and control variables: \textit{Still, the coefficients may depend in 
a more general fashion upon the distribution of the population; this is contrast with earlier works on mean-field games, in which 
the coefficients of LQ-MFGs are also required to be linear or quadratic with respect to the mean of the population}.

In \cite{Foguen}, the $N$ players in the finite game are also assumed to be subject to a common (or systemic) noise in addition to independent noises. Such a modeling is motivated by practical applications.  
We refer to 
the review of 
Gu\'eant, Lasry and Lions \cite{GueantLasryLions.pplnm} for earlier examples of mean-field games 
with a common noise. 
We also refer to the linear-quadratic model (including linear-quadratic coefficients with respect to the mean 
of the population) introduced by Carmona, Fouque and Sun
\cite{CarmonaFouqueSun} for another example involving a common noise. In comparison with 
mean-field games without common noise, the major change in mean-field games with common noise
is that, due to the presence of common noise, the representative player at equilibrium feels the mean-field interaction through the conditional distribution of its state given the common noise. 
In \cite{Foguen}, equilibria to the LQ-MFGs with common noise under study are shown to be characterized by a one dimensional standard 
Forward-Backward Stochastic Differential Equation
(FBSDE for short). In this FBSDE, the forward part describes the conditional mean of the representative player' 
state given the common noise and the backward one accounts for the affine part of the feedback control. 
Thanks to the common noise, this FBSDE is non-degenerate and thus satisfies an existence and uniqueness theorem proved by Delarue in \cite{Delarue02}; in particular, the LQ-MFGs with common noise 
addressed in \cite{Foguen}
have a unique equilibrium. Importantly, 
\cite{Foguen} provides a counter-example to uniqueness of Nash equilibria for a mean-field game in the same class of LQ-MFGs but in the absence of common noise.

\subsection{From restoration of uniqueness to selection of equilibria}
\label{subse:1:3}

The result obtained in \cite{Foguen} is an example of restoration of uniqueness by addition of a common noise. The striking fact in this example is that the action of the common noise onto uniqueness is limpid. Basically, 
the LQ structure forces equilibria to be (one-dimensional) Gaussian processes conditional on the realization of the common noise: Whilst the covariance structure is independent of the realization of the common noise (and hence is deterministic), the conditional mean follows an Ornstein-Ulhenbeck process driven by the common noise. Hence, the class of LQ-MFG's tackled in \cite{Foguen} is parametric (the parameter being one-dimensional) and the role of the common noise is precisely to force the parameter randomly. The fact that equilibria have a one-dimensional structure plays a crucial role in the rest of the paper.   
In this regard, it is worth mentioning that  
Foguen Tchuendom's result was extended by Delarue in \cite{DelarueRestoration} to a larger class of nonparametric non-LQ-MFGs but at the cost of using an infinite dimensional common noise, which makes it much too complex for our purpose.

Foguen Tchuendom's result prompts us to address the following question: For an instance of LQ-MFG which is uniquely solvable under the presence of common noise but which has several equilibria in the absence of common noise, is there any way to select some specific equilibria to the game without common noise? 
To answer this question, we propose here three methods of selection, as described below:

\begin{enumerate}
\item (minimal cost selection) Amongst all the equilibria to the LQ-MFG without common noise, select those that minimize the cost of the representative player;
\item (zero noise limit selection) Consider the unique stochastic equilibrium to the LQ-MFG with common noise and find its weak limit as the intensity of the noise tends to $0$. If this limit exists, select the equilibria that are included in the support of the limit. 
\item ($N$-player limit selection) Solve the $N$-player game without common noise and find the weak limit 
of the equilibrium as $ N \rightarrow +\infty $. If this limit exists, select the equilibria that are included in the support of the limit.
\end{enumerate}

Whilst the first method is directly connected with the optimization structure underpinning the LQ-MFG, the second approach is in fact much more general. Indeed, the idea of restoring uniqueness by means of a random forcing has been extensively studied in probability theory. It goes back to the earlier work of Zvonkin \cite{Zvonkin} on the solvability of one-dimensional stochastic differential equations driven by non-Lipschitz continuous drifts. Several authors also contributed to the subject and addressed the higher dimensional framework, among which Veretennikov \cite{veretennikov}, Krylov and R\"ockner 
\cite{kry:roc:05}, Davie \cite{dav:07}... 
Similar questions have been also addressed in the framework of infinite dimensional stochastic differential equations, see for instance Flandoli, Gubinelli and Priola \cite{FlandoliGubinelliPriola} and the monograph of Flandoli \cite{Flandoli}. 
Still, although restoration of uniqueness has been investigated in various 
frameworks, including, as we just mentioned, infinite dimensional ones, 
finding the zero-noise limit when the corresponding deterministic or ordinary equation has multiple solutions
is a challenging question, for which fewer results are known. The earlier result in this direction is due to 
Bafico and Baldi, see \cite{BaficoBaldi}; it provides a rather complete picture of the selection procedure for one-dimensional dynamics with isolated singularities. 
Examples treated in \cite{BaficoBaldi} will serve us as a benchmark throughout this paper, but, in fact, we will mostly follow 
another approach
to these examples  due to 
Delarue and Flandoli \cite{DelarueFlandoli}. 
In \cite{DelarueFlandoli}, the authors not only address the zero noise limit but also 
make explicit the typical time at which selection occurs; our strategy is to do the same below. 
We refer the reader to 
\cite{Trevisan} for a third proof, to 
\cite{HerrmannRoynette} for related large deviations principles, 
and to 
\cite{AttanasioFlandoli,DelarueFlandoli2,JourdainReygner} for examples of selection in higher (possibly infinite) dimension.

As for the third method of selection, it is directly connected with the fact that mean-field games are understood as limiting versions of games with finitely many players. In this regard, it is a rather challenging question to show that equilibria to 
the finite player games do converge to a solution of the corresponding mean-field game. In fact, the approach to this question depends on the nature of the equilibria: In the finite player system, equilibria may be searched in an open or closed loop form. 
As for open loop equilibria, as considered in \cite{Foguen}, weak compactness methods were first studied by 
Fischer \cite{Fischer}
and
Lacker
\cite{Lacker_limits}. Generally speaking, the point therein is to prove that the support of any weak limit of the laws of the empirical distributions of 
the finite player game equilibria is included in the set of solutions to the limiting mean-field game. Still, to the best of our knowledge, nothing has been said so far on the exact shape of this support (at least when there is no uniqueness): This is the question we want to address below in the particular example specified in the next section. 
As for equilibria in closed loop form, the first main general result on the convergence of equilibria, at least when uniqueness to the limiting mean-field game holds true, is due to 
Cardaliaguet, Delarue, Lasry and Lions \cite{CardaliaguetDelarueLasryLions} and is based on the so-called master equation 
for mean-field games, which is a PDE set on the space of probability measures: As our case is parametric, the master equation for it reduces (up to a correction term) to a one-dimensional standard PDE. 
Importantly, this PDE here takes the form of a scalar conservation law and, although uniqueness 
does not hold for the mean-field game under study and, accordingly, the master equation does not admit
a classical solution, the so-called entropy solution of this scalar conservation law
is intended to be, amongst all the possible solutions, of a special interest. And, indeed, although 
we deal with open loop equilibria, we make an intense use of it throughout the text.
We will go back to this point next. In fact, the reader must be aware that, very recently, Lacker \cite{Lacker2} succeeded to 
extend the weak compactness approach initiated in \cite{Lacker_limits} to closed loop equilibria in order to tackle cases when uniqueness does not hold and henceforth when the master equation is ill-posed. 
Although this new result in the literature on mean-field games is not of a special use in the sequel, it demonstrates that, similar to the question we here address for open loop equilibria, the identification of the weak limits of closed loop equilibria in case when uniqueness does not hold is a hot question as well. In this regard, it 
is worth mentioning that, in a parallel forthcoming work to ours, Cecchin, Dai Pra, Fisher
and Pelino \cite{Cecchin} address a similar question but for equilibria in closed loop form: Namely, 
for a continuous time mean-field game on a two-state space for which uniqueness does not hold,  
they investigate the equilibria that are selected by the limit of the  closed-loop equilibria of the $N$-player game.

\subsection{Summary of the results and organization of the paper}
Generally speaking, we focus below on a specific example of the aforementioned parametric LQ-MFG class, which admits three equilibria.
As made clear in the text, these three equilibria are parametrized by the three following values of the parameter: $-1$, $0$ and $1$. In this framework, we prove that
the zero noise limit and $N$-player limit selection methods select 
with probability $1/2$ the two equilibria $-1$ and $1$ whilst the minimal cost selection 
selects the equilibrium $0$. 
So, the first striking fact of this paper is to show that the minimal cost selection does not yield the same result as the other two approaches! The second one is to show that, here, taking the vanishing viscosity limit and
taking the limit over the number of players give the same result: Intuitively, 
the idiosyncratic noises in the $N$-payer game here aggregate into a common noise of intensity 
of order $1/\sqrt{N}$, which explains why the two approaches yield the same result.  
The last important point is that this example shows that several equilibria may be physically selected in this way; 
even more, it makes clear the fact that randomized equilibria may naturally appear for mean-field games
without common noise: Using the same terminology as in 
\cite{Lacker2} (see also 
\cite{CarmonaDelarue_book_II,CarmonaDelarueLacker,Lacker_limits}), the limit that picks up the two equilibria $-1$ and $1$ with probability $1/2$
should be regarded as a weak mean-field equilibrium; also, the randomness that carries 
the choice between $-1$ and $1$ should be regarded as an endogenous common noise in a game without exogenous common noise!  
\vskip 4pt

Our result should be compared with \cite{Cecchin}
and \cite{Lacker2}. 
First, it is worth mentioning that our work has some similarities with \cite{Cecchin}: As in 
\cite{Cecchin}, we show that the master equation for the LQ-MFG without uniqueness can be addressed by using the theory of entropy solutions to scalar conservation laws; 
as we already mentioned,
there is a (one-dimensional) nonlinear hyperbolic equation underpinning the master equation
and its entropy solution 
permits to identify 
the optimal feedback that is selected by both the zero noise and $N$-player limits.
For sure, this 
general fact 
certainly goes beyond the two examples tackled here and in 
\cite{Cecchin}. Certainly, it
should be addressed in a more systematic way in the future, at least in  \textit{one-dimensional parametric} models.
Indeed, the key point in both papers is that equilibria are driven by a one-dimensional parameter:
Here, the parameter is the mean of the one-dimensional state variable and, in 
\cite{Cecchin}, it is the probability weight of one of the two elements of the state space. 
Although it sounds to be a very exciting question, selection in higher (but finite) dimensional parametric model is probably much more challenging:
The master equation is then expected to reduce to a more complicated non-conservative hyperbolic system. 
The latter fact is made clear in \cite[Section 2]{BLL} for mean-field games with a finite state space of any arbitrary cardinality. Similarly, the model 
we address below can be also written out in higher dimension $d \geq 2$, but, then, the aforementioned one-dimensional PDE
underpinning the master equation turns into a non-conservative hyperbolic system as well.
 
 Another interesting remark about \cite{Cecchin} is that, in the example addressed therein, the 
$N$-player limit selects one equilibrium only while it selects two equilibria in our example: The difference comes from the fact that we here choose an initial condition that exactly seats at the singularity of the entropy solution of the conservation law. As explained below, in our framework, our method can be adapted to handle initial conditions that are away from the singularity, in which case one equilibrium only is selected by the 
zero-noise and $N$-player limits.

Lastly, it must be stressed that a related question to ours is studied in \cite[Subsection 7.2]{Lacker2}:
For a pretty similar LQ-MFG, it is proven in 
\cite{Lacker2}
that, given the weak mean-field equilibrium that charges, with symmetric weights, the two equilibria $-1$ and $1$ of the 
game without uniqueness, it is possible to construct a sequence of approximate Nash equilibria 
that converges to it in the weak sense. 
\vskip 4pt

The paper is organized as follows. We implement the first method, which we call {minimal cost selection}, 
in Section 
\ref{se:2}. In Section 
\ref{se:PDE}, we make clear what is the notion of master equation in our setting.
It plays a key role in the subsequent analysis of the zero-noise limit and of the convergence of
the $N$-player equilibria. 
Section \ref{se:zero:noise} 
is dedicated to the analysis of the zero-noise limit, 
whilst we focus on the limit of the $N$-player equilibria in 
Section \ref{se:zero:noise}. Further computations, that are used in the text, are detailed in Appendix.

\section{Notations and statements}
\label{se:2}
\subsection{Description of the mean-field game and related selection of equilibria}
For the sake of clarity, we recall the class of LQ-MFGs addressed in \cite{Foguen} and the corresponding characterization of equilibria through FBSDEs. 

We are given two independent (one-dimensional) Brownian motions $ B = (B_t)_{t \in [0,T]}$ and $W=(W_t)_{t \in [0,T]}$ defined on a complete filtered probability space
$(\Omega, \mathcal{F}, (\mathcal{F}_t)_{t\in [0,T]}, \mathbb{P})$ satisfying the usual conditions. The representative player's initial state is given in the form of a random variable $\xi \in \mathcal{L}^2_{\mathcal{F}_0}$, $\mathcal{L}^2_{\mathcal{F}_0}$ standing for the collection of square integrable $\mathcal{F}_0$-measurable random variables. We suppose (mostly for convenience) that the filtration  $(\mathcal{F}_t)_{t\in [0,T]}$ corresponds to the natural filtration generated by ${\xi, W, B}$ augmented with $\mathbb{P}$-null sets. Also, we let $(\mathcal{F}^B_t)_{t\in [0,T]}$ be the filtration generated by $ B$ only and augmented with $\mathbb{P}$-null sets. 

Throughout the paper, 
we consider controls $\alpha := (\alpha_t)_{t\in[0,T]} \in \mathcal{H}^2$, where $\mathcal{H}^2$ is the space of $ (\mathcal{F}_t)_{t\in [0,T]}$-progressively measurable processes satisfying 
$$ \mathbb{E}\Big[ \int_0^T|\alpha_t|^2 dt\Big] < +\infty.$$ Finally, we consider three constants $\kappa \in \mathbb{R},\sigma \geq 0, \sigma_{0} \geq 0$ and three bounded and Lipschitz continuous functions $ f, b, g : \mathbb{R} \rightarrow \mathbb{R} $.  The MFG problem considered in \cite{Foguen} reads:

\begin{sch}{(MFG-problem)}
\label{MFG-problem}

\begin{enumerate} 

\item (Mean field Input) If $\sigma_{0} >0$, consider a continuous $(\mathcal{F}^B_t)_{t \in [0,T]}$-adapted process $(\mu_t)_{t \in [0,T]}$ taking values in $\mathbb{R}$. 
If $\sigma_{0}=0$, take $(\mu_{t})_{t \in [0,T]}$ as a deterministic (continuous real-valued) curve. 

\item (Cost Minimization) \textit{Find} $\alpha^{*} \in \mathcal{H}^2$, satisfying
\begin{align*}
\label{costfxn2}
J(\alpha^*) = \min_{\alpha \in \mathcal{H}^2} J(\alpha)  & := \min_{\alpha \in \mathcal{H}^2}  \mathbb{E} \Bigg[ \int_0^T \frac{1}{2} [\alpha_t^2 + (f(\mu_t) + X_t)^2 ] dt  + \frac{1}{2}(X_T + g(\mu_T))^2 \Bigg] 
\end{align*}

\textit{under the stochastic dynamics}
\begin{equation}
\label{constraint2}
\begin{cases}
dX_t = [ \kappa X_t + \alpha_t + b(\mu_t)]dt + \sigma dW_t + \sigma_0 dB_t , \hspace{2mm} \forall t \in [0,T],
\\ 
 X_0 = \xi. 
 \end{cases}
\end{equation}

\item (McKean-Vlasov constraint) If $\sigma_{0}>0$, find $(\mu_t)_{t \in [0,T]}$ such that: 
\begin{equation*} 
\forall t \in [0,T],\hspace{2mm} \mu_t = \mathbb{E}[X^{\alpha^*}_t| \mathcal{F}^B_T]. 
\end{equation*}
If $\sigma_{0} =0$, find $(\mu_t)_{t \in [0,T]}$ such that the above holds true without conditional expectation. 
\end{enumerate}
\end{sch}

We recall from \cite{Foguen} that one can characterize the solutions of this MFG-problem through FBSDEs as in the following proposition. 

\begin{prop}
\label{prop1}
Given the above data with $\sigma_{0}>0$, 
there exists an MFG-solution $(\alpha_t,\mu_t)_{t \in [0,T]} $  if and only if 
there exists an $({\mathcal F}_{t}^{B})_{t \in [0,T]}$ adapted solution
$(\mu^{\xi,\sigma_0}_t, h^{\xi,\sigma_0}_t, Z^{\xi,\sigma_0}_t)_{t \in [0,T]}$
 to the FBSDE:
\begin{equation}
\label{fbsde1}
\begin{cases}
\forall t \in [0,T],
\vspace{2pt}
\\ 
d\mu^{\xi,\sigma_0}_t = \bigl[ -w^{-2}_t h^{\xi,\sigma_0}_t + w^{-1}_t b(w_t \mu^{\xi,\sigma_0}_t) \bigr]dt + w^{-1}_t \sigma_0 dB_t,   
\vspace{2pt}
\\ 
dh^{\xi,\sigma_0}_t  = \bigl[ -w_t f(w_t \mu^{\xi,\sigma_0}_t) - w_t \eta_t b(w_t \mu^{\xi,\sigma_0}_t) \bigr]dt + Z^{\xi,\sigma_0}_t dB_t,\vspace{2pt}
\\ 
\textrm{\rm and} \quad 
\mu^{\xi,\sigma_0}_0 = \mathbb{E} [\xi]w^{-1}_0, \hspace{1mm} h^{\xi,\sigma_0}_T =g(\mu^{\xi,\sigma_0}_T),
\end{cases}
\end{equation}
where
\begin{align*}
&w_t  :=  \exp \Big( \int_t^T (- \kappa+ \eta_s)ds \Big), \quad \forall t \in [0,T],
 \notag 
\\
&\eta := (\eta_t)_{t\in[0,T]}  \hspace{2mm} \text{is the unique solution to the Riccati ODE:} \hspace{2mm} \frac{d \eta_t}{dt} = \eta^2_t -2\kappa \eta_t - 1, \hspace{2mm} \eta_T = 1. 
\end{align*}
When FBSDE (\ref{fbsde1}) is solvable, $(\alpha_{t},\mu_{t})_{t \in [0,T]}$
and $(\mu^{\xi,\sigma_0}_{t},h_{t}^{\xi,\sigma_{0}})_{t \in [0,T]}$
are connected by the following relationships:
\begin{equation*}
\begin{split}
&\mu_{t} = w_{t} \mu^{\xi,\sigma_0}_t, \quad \forall t \in [0,T],
\\
&\alpha_t =  -\eta_t X_t - h_t, 
\quad \textrm{\rm where} \quad h_{t} = w_{t}^{-1} h^{\xi,\sigma_0}_t, \quad \forall t \in [0,T],
\end{split}
\end{equation*}
$(X_{t})_{t \in [0,T]}$ being implicitly defined as the solution of the 
forward equation (\ref{constraint2}).

The result remains true when $\sigma_{0}=0$ except that 
$Z^{\xi,\sigma_0}$ in 
\eqref{fbsde1}
is null, that is to say 
\eqref{fbsde1} is a deterministic system. 
\end{prop}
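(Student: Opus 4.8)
The plan is to split the claimed equivalence into the resolution of the inner optimal control problem for a frozen mean-field input, followed by a reformulation of the McKean--Vlasov fixed point, and finally to match the resulting coupled system with \eqref{fbsde1} via an explicit deterministic change of unknowns.

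\textbf{Step 1 (the inner LQ problem).} Fix the input $(\mu_t)_{t\in[0,T]}$ as in item~(1) of Scheme~\ref{MFG-problem}. Since the dynamics \eqref{constraint2} is affine in $(\alpha,X)$ and $J$ is the sum of $\tfrac12\mathbb{E}\int_0^T\alpha_t^2\,dt$ and of expectations of squares of affine functionals of $\alpha$, the cost $J$ is strictly convex and coercive on $\mathcal{H}^2$; hence there is a unique minimizer $\alpha^*$, characterized by the stochastic Pontryagin principle (necessary and sufficient here by convexity). Writing the Hamiltonian and minimizing over $\alpha$ gives $\alpha^*_t=-Y_t$, where the adjoint process $(Y_t)$ solves the backward equation with terminal value $Y_T=X_T+g(\mu_T)$ and driver $-[\kappa Y_t+X_t+f(\mu_t)]$. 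The affine ansatz $Y_t=\eta_tX_t+h_t$ forces, by identification of the quadratic part, the Riccati ODE $\dot\eta_t=\eta_t^2-2\kappa\eta_t-1$, $\eta_T=1$ (globally solvable on $[0,T]$ because the associated quadratic has two distinct real roots of opposite sign, so the Riccati flow stays bounded), while the affine part obeys the linear BSDE
\begin{equation*}
dh_t=\bigl[(\eta_t-\kappa)h_t-f(\mu_t)-\eta_t b(\mu_t)\bigr]dt+Z_t\,dB_t,\qquad h_T=g(\mu_T),
\end{equation*}
whose $L^2$ well-posedness follows from classical BSDE theory since $f(\mu_t),b(\mu_t)$ are bounded. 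Thus $\alpha^*_t=-\eta_tX_t-h_t$.

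\textbf{Step 2 (fixed point) and Step 3 (change of variables).} Inserting $\alpha^*$ into \eqref{constraint2} and applying $\mathbb{E}[\,\cdot\mid\mathcal{F}^B_T]$ kills the idiosyncratic part $\sigma\,dW_t$, keeps $\sigma_0\,dB_t$, and, using that $\eta$ is deterministic and $h$ is $(\mathcal{F}^B_t)$-adapted, shows that $\mu_t:=\mathbb{E}[X^{\alpha^*}_t\mid\mathcal{F}^B_T]$ solves $d\mu_t=[(\kappa-\eta_t)\mu_t-h_t+b(\mu_t)]dt+\sigma_0\,dB_t$ with $\mu_0=\mathbb{E}[\xi]$, once the McKean--Vlasov constraint $\mu_t=\mathbb{E}[X^{\alpha^*}_t\mid\mathcal{F}^B_T]$ is imposed; coupled with the backward equation for $h$ this yields a self-contained FBSDE in $(\mu,h,Z)$. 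Then introduce the deterministic weight $w_t=\exp(\int_t^T(-\kappa+\eta_s)\,ds)$, so $\dot w_t=(\kappa-\eta_t)w_t$ and $w_T=1$, and set $\mu^{\xi,\sigma_0}_t=w_t^{-1}\mu_t$, $h^{\xi,\sigma_0}_t=w_th_t$, $Z^{\xi,\sigma_0}_t=w_tZ_t$. A product-rule computation (no Itô correction, $w$ being deterministic) turns the forward equation into the first line of \eqref{fbsde1} and, the terms proportional to $(\kappa-\eta_t)w_th_t$ cancelling, the backward equation into the second line; the data become $\mu^{\xi,\sigma_0}_0=w_0^{-1}\mathbb{E}[\xi]$ and $h^{\xi,\sigma_0}_T=g(\mu^{\xi,\sigma_0}_T)$. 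The stated relations $\mu_t=w_t\mu^{\xi,\sigma_0}_t$ and $\alpha_t=-\eta_tX_t-h_t$ with $h_t=w_t^{-1}h^{\xi,\sigma_0}_t$ are exactly these substitutions.

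\textbf{Converse and the case $\sigma_0=0$.} For the reverse implication one runs the same computation backwards: given an $(\mathcal{F}^B_t)$-adapted solution of \eqref{fbsde1}, undo the scalings to recover $(\mu,h,Z)$, set $\mu=w\mu^{\xi,\sigma_0}$ as the input, define $\alpha^*_t=-\eta_tX_t-h_t$ with $X$ the solution of \eqref{constraint2}, invoke the sufficiency half of the Pontryagin principle to get optimality of $\alpha^*$, and check the McKean--Vlasov constraint by conditioning on $\mathcal{F}^B_T$ as in Step~2. The case $\sigma_0=0$ is the specialization where $B$ is absent: all processes are deterministic, $Z\equiv0$, and the BSDE for $h$ degenerates to a backward ODE.

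\textbf{Main obstacle.} The delicate part is the measurability bookkeeping in Step~2 --- ensuring $\mathbb{E}[X^{\alpha^*}_t\mid\mathcal{F}^B_T]$ is a well-defined continuous $(\mathcal{F}^B_t)$-adapted process and that conditioning commutes with the dynamics --- together with a version of the verification/sufficiency argument robust to $\mu$ being a merely square-integrable random input rather than a deterministic curve; granted these, the Riccati reduction and the change of variables are routine.
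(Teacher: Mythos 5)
The paper does not prove this proposition itself but recalls it from the cited reference \cite{Foguen}, where it is established by exactly the route you follow: the stochastic Pontryagin principle (necessary and sufficient by convexity of the Hamiltonian) for the frozen-input LQ problem, the affine ansatz $Y_t=\eta_t X_t+h_t$ yielding the Riccati equation and the linear BSDE for $h$, conditioning on $\mathcal{F}^B_T$ to obtain the McKean--Vlasov forward equation, and the deterministic rescaling by $w_t$ to reach \eqref{fbsde1}. Your computations (cancellation of the $(\kappa-\eta_t)$ terms, $w_T=1$ giving the terminal condition, global solvability of the Riccati flow since $\eta_T=1$ is trapped between the equilibria) are correct, so the proposal is a sound reconstruction of essentially the same argument.
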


We know from \cite{Foguen} that  
\begin{enumerate}
\item In the presence of common noise (i.e $\sigma_0 > 0$), FBSDE (\ref{fbsde1}) is 
uniquely solvable, in which case there is a unique equilibrium to the LQ-MFG;
\item In the absence of common noise  (i.e $\sigma_0 = 0$),
FBSDE (\ref{fbsde1}) is solvable, but it may admit several solutions. In that case, which we call \textit{degenerate}, 
there may be several equilibria.
\end{enumerate}

\subsection{A particular case}
\label{subse:particular:case}
Throughout the paper, we consider the particular case when $ f = b = \xi = 0$  
and $g : \mathbb{R} \rightarrow \mathbb{R}$ given by  
\begin{equation}
\label{eq:g}
 g(x) :=  - \frac{x}{r_\delta}  \mathbf{1}_{ |x| \leq r_\delta } - \textrm{\rm sign}(x) \mathbf{1}_{ |x| > r_\delta },
 \end{equation}
where for  a fixed time $\delta \in (0,T)$,  $r_\delta := \int_{\delta}^T  w^{-2}_s ds > 0.$ 

Proposition \ref{prop1} states that, in order to find an equilibrium to this particular LQ-MFG, it is sufficient (and in fact necessary as well) to find a continuous, $({\mathcal F}_{t}^{B})_{t \in [0,T]}$ adapted, solution $(\mu^{0,\sigma_0}_t, h^{0,\sigma_0}_t, Z^{0,\sigma_0}_t  )_{t\in[0,T]}$ to the FBSDE  
 
\begin{equation}
\label{fbsde2}
\begin{cases}
d\mu^{0,\sigma_0}_t = -w^{-2}_t h^{0,\sigma_0}_t dt + w^{-1}_t \sigma_0 dB_t, \hspace{2mm} \forall t \in [0,T],
\\ 
dh^{0,\sigma_0}_t  =  Z^{0,\sigma_0}_t dB_t, \hspace{2mm} \forall t \in [0,T], 
\\ 
\mu^{0,\sigma_0}_0 = 0, \hspace{1mm} h^{0,\sigma_{0}}_T = g(\mu^{0,\sigma_0}_T) \hspace{1mm} .
\end{cases}
\end{equation}

In the presence of common noise (i.e $\sigma_0 > 0$), the FBSDE (\ref{fbsde2}) has a unique solution. Thus there exists a unique equilibrium  $(w_t \mu^{0,\sigma_0}_t, \alpha_t =  -\eta_t X_t - w_{t}^{-1} h^{0,\sigma_0}_t )_{t\in[0,T]}$ whose randomness depends only on the common noise $B$.

In absence of common noise (i.e $\sigma_0 = 0$), the system 
\eqref{fbsde2} becomes 
\begin{equation}
\label{fbsde2:sigma0=0}
\begin{cases}
d\mu^{0,0}_t =  -w^{-2}_t h^{0,0}_t dt, \hspace{2mm} \forall t \in [0,T],
\\ 
dh^{0,0}_t  = 0, \hspace{2mm} \forall t \in [0,T], 
\\ 
\mu^{0,0}_0 = 0, \hspace{1mm} h^*_T = g(\mu^{0,0}_T) \hspace{1mm} .
\end{cases}
\end{equation}

Our analysis is based upon the following observation that 
\eqref{fbsde2:sigma0=0}
has multiple solutions: 

\begin{prop}
\label{prop:sol:sigma0=0}
There exist three solutions to (\ref{fbsde2:sigma0=0}), which are 
\begin{equation}
\label{equilibria}
(\mu^{0,0}_t, h^{0,0}_t, Z^{0,0}_t  )_{t\in[0,T]} = \bigg(-A \int_0^t w^{-2}_s ds , A, 0  \bigg)_{t\in[0,T]} \quad \textrm{\rm for} \quad A \in \{ -1,0,1\}.
\end{equation}
\end{prop}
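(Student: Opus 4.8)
The plan is to exploit the very rigid structure of the deterministic system \eqref{fbsde2:sigma0=0}. Since $dh^{0,0}_t=0$, the backward component must be constant in time; write $A:=h^{0,0}_0=h^{0,0}_T\in\mathbb R$ for its common value. Then the forward equation together with the initial condition $\mu^{0,0}_0=0$ forces
\[
\mu^{0,0}_t=-A\int_0^t w^{-2}_s\,ds,\qquad t\in[0,T],
\]
while, there being no Brownian integrator in \eqref{fbsde2:sigma0=0}, the natural (and only) choice is $Z^{0,0}\equiv 0$, consistently with the last assertion of Proposition \ref{prop1} for $\sigma_0=0$. Consequently, solving \eqref{fbsde2:sigma0=0} is \emph{equivalent} to finding the real numbers $A$ for which the terminal constraint $h^{0,0}_T=g(\mu^{0,0}_T)$ holds, i.e.
\[
A=g\bigl(-A\,r_0\bigr),\qquad\text{where}\qquad r_0:=\int_0^T w^{-2}_s\,ds .
\]
This reduces the proposition to an elementary scalar fixed-point problem for the map $A\mapsto g(-Ar_0)$.

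Next I would record the strict inequality $r_0>r_\delta>0$, which holds because $\delta\in(0,T)$ and $s\mapsto w^{-2}_s$ is continuous and strictly positive on $[0,T]$ (recall $w_s=\exp(\int_s^T(-\kappa+\eta_u)\,du)>0$ since $\eta$ solves the Riccati ODE on the finite interval $[0,T]$ and is therefore bounded). I then simply substitute the three candidate values into the fixed-point equation, using the explicit form \eqref{eq:g} of $g$: for $A=0$ one has $g(0)=0$; for $A=1$ one has $-Ar_0=-r_0$ with $|-r_0|=r_0>r_\delta$, hence $g(-r_0)=-\mathrm{sign}(-r_0)=1$; and for $A=-1$, symmetrically, $g(r_0)=-\mathrm{sign}(r_0)=-1$. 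In each case the terminal condition is satisfied, which produces exactly the three triples displayed in \eqref{equilibria}.

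To confirm that these are \emph{all} the solutions, so that the count of three is exact, I would split according to the size of $|A|\,r_0$. If $|A|\,r_0\le r_\delta$, then $g(-Ar_0)=Ar_0/r_\delta$, so the equation reads $A(1-r_0/r_\delta)=0$; since $r_0\ne r_\delta$ this forces $A=0$. If $|A|\,r_0> r_\delta$, then $g(-Ar_0)=-\mathrm{sign}(-Ar_0)=\mathrm{sign}(A)$, whence $A=\mathrm{sign}(A)$, i.e. $A=\pm1$, and both values are indeed admissible in this regime since $|A|\,r_0=r_0>r_\delta$. Hence $A\in\{-1,0,1\}$ and \eqref{equilibria} lists precisely the solutions of \eqref{fbsde2:sigma0=0}.

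As for difficulty, there is essentially no analytic obstacle: the entire content of the proof is the observation that $h^{0,0}$ must be constant, after which everything collapses to the scalar equation $A=g(-Ar_0)$; the only point requiring a (trivial) argument is the strict inequality $r_0>r_\delta$, which is exactly what pins the two nonzero roots at $\pm1$ and rules out a continuum of roots accumulating at $0$. In other words, the function $g$ in \eqref{eq:g} has been tailored precisely so that this holds, with the transition built in at time $\delta$.
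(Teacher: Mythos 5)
Your proposal is correct and follows essentially the same route as the paper: the backward component must be constant, say equal to $A$, which collapses the system to the scalar fixed-point equation $A=g(-A r_0)$ with $r_0=\int_0^T w_s^{-2}\,ds$, and the case split on $|A|r_0$ versus $r_\delta$ then yields exactly $A\in\{-1,0,1\}$. If anything, you are slightly more careful than the paper in the regime $|A|r_0\le r_\delta$, where you correctly write the fixed-point relation as $A=Ar_0/r_\delta$ and note that the strict inequality $r_0>r_\delta$ is what forces $A=0$.
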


\begin{proof} The first point is to check that the functions given in the statement are indeed solutions to the equation. In fact, the only difficult point is to check the boundary condition. When $A=0$, there is no difficulty. When $A=1$, we observe that 
$\vert \mu^{0,0}_T \vert \geq r_{\delta}$. Hence, $g( \mu^{0,0}_T) = 1$, which is indeed equal to $1$. The case 
$A=-1$ is treated in the same way.

It then remains to check that there are no other solutions. In fact, whatever the solution, the process
$(h_{t}^{0,{0}})_{t \in [0,T]}$ must be constant, hence it must be equal to some $A \in \RR$. Then, 
$\mu^{0,{0}}_{T} = -A \int_{0}^T w^{-2}_{t} dt$. 

If $\vert A \int_{0}^T w^{-2}_{t} dt \vert \leq r_{\delta}$, then the terminal boundary 
condition writes $A = -A \int_{0}^T w^{-2}_{t} dt/r_{\delta}$, which yields $A=0$. 
If $\vert A \int_{0}^T w^{-2}_{t} dt \vert > r_{\delta}$, the boundary condition is in $\{-1,1\}$ and we get 
$A \in \{-1,1\}$.

\end{proof}

\subsection{Main statement}

Referring to the three approaches detailed in Subsection \ref{subse:1:3}, our main statement has the following form:

\begin{thm}
\label{thm:4}
As for the example introduced 
in Subsection \ref{subse:particular:case}, the minimal cost selection selects, in the regime $\sigma_{0}=0$, the
 equilibrium corresponding to $A=0$, whilst the zero-noise limit and the $N$-player game 
 (under the additional assumption that $\sigma >0$)
 approaches select
 a randomized equilibrium, as given by the equilibrium $A=1$ with probability $1/2$ and by the equilibrium $A=-1$ with probability $1/2$.  
\end{thm}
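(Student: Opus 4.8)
The plan is to split the proof into three essentially independent parts, one for each selection method, relying heavily on the parametric one-dimensional structure that reduces everything to the backward variable $h$ (equivalently, the parameter $A$) and to a scalar conservation law.

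\textbf{Step 1: Minimal cost selection.} For each of the three equilibria $A\in\{-1,0,1\}$ I would compute the equilibrium cost $J(\alpha^*)$ explicitly. Since $f=b=\xi=0$ and the dynamics and cost are linear-quadratic, the optimal control is $\alpha_t=-\eta_t X_t - h_t$ with $h_t = w_t^{-1}h^{0,0}_t = w_t^{-1}A$, and $X$ solves the corresponding linear SDE with $\sigma_0=0$, $\xi=0$. Plugging this into $J$, the cost becomes an explicit quadratic function of $A$ plus a constant independent of $A$ (the constant coming from the $\sigma^2$ fluctuation terms); by the standard LQ verification/completion-of-squares identity the $A$-dependent part is a positive multiple of $A^2$ (one should double-check the sign and that there is no linear term, which vanishes because $\xi=0$ and $f=b=0$ make all the relevant affine terms symmetric). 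Hence $A=0$ is the unique minimiser, which is the first assertion. This step is routine LQ algebra; the only care needed is to carry the computation of $w_t$, $\eta_t$ and $r_\delta$ consistently with \eqref{eq:g}.

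\textbf{Step 2: The master equation / scalar conservation law.} Following Section \ref{se:PDE} of the paper, I would introduce the decoupling field $U(t,x)$ for the FBSDE \eqref{fbsde2}, i.e. $h^{0,\sigma_0}_t = U(t,\mu^{0,\sigma_0}_t)$, and observe that $U$ solves a one-dimensional PDE which, after differentiating in $x$ or after the change of variables turning the transport term $-w_t^{-2}h$ into a nonlinearity, takes the form of a scalar conservation law (a Burgers-type equation) with terminal/initial data encoding $g$. The three solutions of Proposition \ref{prop:sol:sigma0=0} correspond to the three characteristics emanating from $x=0$, and the choice of initial condition $\mu^{0,0}_0=0$ is exactly the location of the discontinuity in $g'$ (the ``transition point''), which is why the characteristics fan out. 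The relevant object is the entropy solution of this conservation law, and the key structural fact to establish here is that at the singular point the entropy solution is the one that, in the vanishing-viscosity picture, corresponds to a symmetric splitting between the two outer characteristics $A=\pm 1$.

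\textbf{Step 3: Zero-noise limit ($\sigma_0\to 0$).} With $\sigma_0>0$ the FBSDE \eqref{fbsde2} is uniquely solvable and $h^{0,\sigma_0}_t = U^{\sigma_0}(t,\mu^{0,\sigma_0}_t)$ where $U^{\sigma_0}$ solves the viscous version of the conservation law from Step 2, with viscosity parameter proportional to $\sigma_0^2$. I would then study the forward SDE for $\mu^{0,\sigma_0}$, which near $t=0$ behaves like a Brownian motion of intensity $\sim w_0^{-1}\sigma_0$ perturbed by a drift that pushes away from $0$ once $|\mu|$ is of order the threshold $r_\delta$; this is precisely the Peano/Bafico--Baldi picture treated by Delarue--Flandoli \cite{DelarueFlandoli}. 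The argument is: (i) show tightness of $(\mu^{0,\sigma_0},h^{0,\sigma_0})$ as $\sigma_0\to 0$; (ii) show any weak limit is supported on the three deterministic solutions \eqref{equilibria}; (iii) use the symmetry $B\mapsto -B$ (and $f=b=\xi=0$, $g$ odd) together with the instability of the $A=0$ branch to conclude the limit charges $A=1$ and $A=-1$ each with probability $1/2$ and $A=0$ with probability $0$. The exclusion of $A=0$ and the precise $1/2$--$1/2$ split — i.e. the fine behaviour of the diffusion near the unstable equilibrium and the identification of the typical exit time $\delta$ — is the technical heart, and I expect this to be the main obstacle; I would import the estimates of \cite{DelarueFlandoli} (comparison with the linearised drift $x/r_\delta$ near $0$, logarithmic exit-time asymptotics) rather than redo them.

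\textbf{Step 4: $N$-player limit ($N\to\infty$, $\sigma>0$).} Here I would write the open-loop $N$-player equilibrium (as in \cite{Foguen,Fischer,Lacker_limits}), whose mean-field component $\bar X^N_t = \frac1N\sum_i X^{i}_t$ satisfies, at equilibrium, a closed $N$-dependent system. The crucial observation, already flagged in the introduction, is that the $N$ independent idiosyncratic noises aggregate, in the equation for $\bar X^N$, into a single noise of intensity $\sigma/\sqrt N$ — so the empirical mean evolves like $\mu^{0,\sigma_0}$ with the effective $\sigma_0 = \sigma/\sqrt N \to 0$. Thus the $N$-player limit reduces to the same analysis as Step 3, with the same conservation-law/entropy-solution input from Step 2, and one gets the same randomized $\pm 1$ limit. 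The obstacle specific to this step is making the reduction rigorous: one must justify that the $N$-player equilibrium feedback is, up to vanishing error, the affine feedback $-\eta_t x - h$ with $h$ the appropriate random variable, and control the coupling between the $h$-component and the aggregated noise uniformly in $N$; I would use the a priori $\mathcal H^2$ bounds and the one-dimensional decoupling field to close this, then invoke tightness plus identification of the limit exactly as before.

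In all four steps the unifying device is that the problem is genuinely one-dimensional (parametrised by $h$, i.e. by $A$), so that the otherwise infinite-dimensional master equation collapses to a scalar conservation law whose entropy solution dictates the selection; the comparison with \cite{DelarueFlandoli} supplies the quantitative small-noise estimates, and the built-in odd symmetry of $g$ forces the $1/2$--$1/2$ weights.
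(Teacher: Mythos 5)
Your Steps 1 and 2 coincide with the paper's own treatment (Proposition \ref{mincostselect} and Section \ref{se:PDE}), and Step 3 is the right strategy --- the paper makes it precise through the transition point $(\epsilon_0,t_0)=(\sigma_0^2L^2(\sigma_0),\sigma_0)$ of Propositions \ref{transitionpoint} and \ref{boundasymp}, followed by the symmetry argument you describe. Your quantitative picture there is miscalibrated, however: for $t\le\delta$ the entropy solution equals $-\mathrm{sign}(x)$ everywhere, so the repulsive drift is felt as soon as $|x|\gtrsim\sigma_0^2L(\sigma_0)$ (this is the content of Proposition \ref{propPSI}), not once $|x|$ is of order $r_\delta$; and the limiting drift at the origin is the discontinuous $\mathrm{sign}$-type drift, so the relevant exit asymptotics are power-law in $\sigma_0$ (transition time of order $\sigma_0$), not logarithmic. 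This does not break the plan, since the escape-and-comparison estimates you propose to import are exactly what the paper adapts.

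The genuine gap is in Step 4. The aggregated backward variable $v^N_t=\frac{w_t}{N}\sum_iV^i_t$ has terminal condition $\frac1N\sum_ig(\tilde\mu^{i,N}_T)$, which is \emph{not} a function of the empirical mean $\tilde\mu^N_T$; hence the $N$-player system is not literally the common-noise FBSDE with $\sigma_0=\sigma N^{-1/2}$, and the discrepancy must be controlled in a regime where the limiting equation is ill-posed. Your proposal to close this with a priori $\mathcal H^2$ bounds and the decoupling field does not work: by Lemma \ref{lem:gradient} the Lipschitz constant of the decoupling field at viscosity $\sigma^2/N$ is of order $N$, so any Gronwall-type stability estimate comparing the two systems produces a factor $e^{CN}$ that the $O(N^{-1/2})$ mismatch $|\tilde\mu^N_T-\tilde\mu^{i,N}_T|$ cannot beat. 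The paper circumvents this with a one-sided comparison device: it introduces a modified terminal datum $\tilde g^N\ge g$ with $\int_{\RR}(\tilde g^N-g)=2\gamma_N^2/r_\delta$, invokes the $L^1$-contraction of the conservation law (Lemma \ref{comparePDEs}) together with the monotonicity of the decoupling fields and the concavity of $g$ on $(-\infty,r_\delta]$, and deduces the one-sided bound $-v^N_t\ge-\tilde\theta^N(t,\tilde\mu^N_t)+\triangle^N_t$ with $\triangle^N\to0$ (Lemmas \ref{le:0} and \ref{le:2}); this is precisely what is needed to verify the hypotheses (A1)--(A3) of the general Theorem \ref{thm:general}. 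Without this, or an equivalent substitute, your Step 4 does not close.
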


We refer to Subsection 
\ref{subse:Nplayer_game} for a clear meaning of what we call $N$-player game in the framework under study. 
\vskip 4pt

Although the rule of selection based upon minimal cost is sometimes met in the literature, this result shows that it leads in fact to contradictory results with the other rules of selection addressed in the paper.
In fact, this should not come as a surprise. Indeed, 
it is worth mentioning that, in Scheme
\ref{MFG-problem}, we can add any function of $\mu_{T}$ 
to the terminal cost entering the definition of 
$J$. Obviously, this should not change the minimizers of $J$ since the value of 
$\mu_{T}$ is kept frozen in the optimization procedure. Still, this may certainly modify the output 
of the minimal cost selection method. This strongly suggests that the minimal  cost 
selection method is of a limited scope. 
\vskip 4pt

Regarding the two other selection rules, we draw reader's attention to the following two points. 
First, the randomized equilibrium given by $A=\pm 1$ with probability $1/2$ should be regarded 
as an equilibrium on its own. It requires a modicum of care to write out the matching condition 
in item (3) of Scheme 
\ref{MFG-problem}, but it can be done at the price of conditioning on the value of $A$: Given the fact that $A=\pm 1$ is selected, 
the conditional mean of the state variable is 
$(-A \int_0^t w^{-2}_s ds)_{t\in[0,T]}$, see
\eqref{equilibria}. In other words, the randomized equilibrium carries an endogenous systemic noise.
Actually, this property is pretty similar to the one encountered for weak solutions to stochastic differential equations, 
see 
\cite{StroockVaradhan}, which carry an extra randomness in addition to the exogeneous noise driving the equation. By analogy, the randomized equilibrium could be called a weak mean-field equilibrium, 
see for instance 
\cite{CarmonaDelarue_book_II,CarmonaDelarueLacker,
Lacker2,Lacker_limits} for more details. In this regard, it is worth mentioning that 
endogenous noises also appear in the analysis of the Peano 
phenomenon by vanishing viscosity method, see 
\cite{BaficoBaldi}. 

 In fact, and this is the second point we want to stress, the zero-noise limit and $N$-player game approaches here select two equilibria (and not one equilibrium) because of the choice we made for the initial condition $\xi$. As we show below, $\xi = 0$ is indeed the discontinuity point of the entropy solution 
of a certain scalar conservation law that underpins the game, see 
\eqref{teta}. In this regard, it is the worst (meaning the most unstable) initial point 
that we can guess. 
In fact, we could have chosen an initial condition $\xi \not =0$. In this framework, 
we have an extension of Proposition 
\ref{prop:sol:sigma0=0}:
 Whenever
$\vert \xi \vert < \int_{0}^{\delta} w_{s}^{-2} ds$, the 
FBSDE \eqref{fbsde2:sigma0=0} with $\mu_{0} = \xi$ as initial condition has three solutions, which are
\begin{equation}
\label{equilibria}
(\mu_t, h_t, Z_t  )_{t\in[0,T]} = \bigg(\xi -A \int_0^t w^{-2}_s ds , A, 0  \bigg)_{t\in[0,T]} \quad \textrm{\rm for} \quad A \in \Bigl\{ -1,\frac{\xi}{\int_{0}^{\delta} w_{s}^{-2} ds},1 \Bigr\}.
\end{equation}
In this framework, 
the methodology we develop for addressing the case $\xi =0$ also applies 
to the case $\vert \xi \vert < \int_{0}^{\delta} w_{s}^{-2} ds$, $\xi \not =0$: 
The key tool is Proposition 
\ref{boundasymp}. It shows the following: When initiating the mean field 
game with a common noise of intensity $\sigma_{0}>0$ from $\xi \not =0$, the conditional mean of the representative
player 
stays away from $0$ with probability asymptotically equal to $1$ as $\sigma_{0}$ tends to $0$; 
similarly,  when initiating the $N$-player game from a common point $\xi \not =0$, the empirical mean of the $N$ players stays away from $0$ with probability asymptotically equal to $1$ as $N$ tends to $\infty$. 
In that case, both approaches should select only one equilibrium among the above three ones: 
When $\xi >0$, they should select $A=-1$ as it is the only one for which $(\mu_{t})_{t \in [0,T]}$
in 
\eqref{equilibria}
remains positive; similarly, when $\xi <0$, they should select $A=1$. 
For sure, the case $\xi = 0$ is more difficult as the first step is precisely to show that 
the conditional mean of the representative player in the mean-field game with common noise 
or the empirical mean of the players in the $N$-player game go sufficiently far away from 
$0$ before they stay either in the positive or negative half-plane: This is the so-called notion of 
transition point introduced in 
Subsection \ref{subse:transition:point} that makes this fact clear.

\section{Minimal cost selection}
\label{se:minimal}

Keep in mind the particular LQ-MFG (\ref{fbsde2}) and focus more specifically on the case without common noise 
(i.e $ \sigma_0 = 0$), see \eqref{fbsde2:sigma0=0}. As stated in Proposition \ref{prop:sol:sigma0=0}, one can construct three distinct equilibria to the LQ-MFG. A way to choose an equilibrium among the three available ones is to find which one(s) yield the minimal cost. 
This is what we call below \textit{the cost minimization approach}. 

In our specific framework, we have the following result:

\begin{prop}
\label{mincostselect}
With the notations of Proposition 
\ref{prop:sol:sigma0=0}, the cost minimization approach selects the equilibrium corresponding to $A=0$, i.e  $$\big(\mu_t = 0 , \alpha_t =  -\eta_t X_t \big)_{t\in[0,T]} \quad \textrm{\rm where} \quad X_t  =  w_t \int_0^t \sigma w_s^{-1} dW_s, \quad  \forall t \in [0,T].$$
\end{prop}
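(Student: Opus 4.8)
The plan is to compute the equilibrium cost attached to each of the three equilibria of Proposition~\ref{prop:sol:sigma0=0} and to compare the three values. Fix $A\in\{-1,0,1\}$; by Proposition~\ref{prop1} together with Proposition~\ref{prop:sol:sigma0=0}, the associated mean-field input is $\mu^A_t=w_t\mu^{0,0}_t=-A\,w_t\int_0^t w_s^{-2}\,ds$, so $g(\mu^A_T)=A$, and the representative player's optimal control is $\alpha^A_t=-\eta_t X^A_t-Aw_t^{-1}$. Because $\xi=f=b=\sigma_0=0$, the controlled state then solves the scalar linear SDE
\[
dX^A_t=\bigl[(\kappa-\eta_t)X^A_t-Aw_t^{-1}\bigr]\,dt+\sigma\,dW_t,\qquad X^A_0=0,
\]
and the object to compare is $J^A:=\mathbb{E}\bigl[\int_0^T\tfrac12\bigl((\alpha^A_t)^2+(X^A_t)^2\bigr)\,dt+\tfrac12(X^A_T+A)^2\bigr]$, which by Proposition~\ref{prop1} is exactly the minimal cost of the player facing the input $\mu^A$; the cost minimization approach thus amounts to minimizing $A\mapsto J^A$ over $\{-1,0,1\}$.

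The key step is to decompose $X^A$ as the ($A=0$) state plus a deterministic correction. Writing $X^0$ for the solution of the displayed SDE with $A=0$, one has $X^A_t=X^0_t+m^A_t$, where $m^A$ solves the \emph{deterministic} equation $\dot m^A_t=(\kappa-\eta_t)m^A_t-Aw_t^{-1}$, $m^A_0=0$; using the identities $\tfrac{d}{dt}w_t=(\kappa-\eta_t)w_t$ and $\tfrac{d}{dt}w_t^{-1}=(\eta_t-\kappa)w_t^{-1}$ that follow from the definition of $w$, a direct verification gives $m^A_t=-A\,w_t\int_0^t w_s^{-2}\,ds=\mu^A_t$. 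Likewise $\alpha^A_t=\alpha^0_t+\beta^A_t$ with $\beta^A_t:=-\eta_t m^A_t-Aw_t^{-1}$ deterministic. Since $X^0$ solves a homogeneous linear SDE started from $0$, $\mathbb{E}[X^0_t]=0$ for all $t$, hence $\mathbb{E}[\alpha^0_t]=0$ and $\mathbb{E}[X^0_T]=0$. Expanding the quadratics that define $J^A$ and discarding the cross terms, which vanish in expectation, I obtain
\[
J^A=J^0+\int_0^T\tfrac12\bigl((\beta^A_t)^2+(m^A_t)^2\bigr)\,dt+\tfrac12\bigl(m^A_T+A\bigr)^2 .
\]

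This identity gives $J^A\ge J^0$ for every $A$, with equality only if $m^A\equiv0$ on $[0,T]$; since $m^A_t=-A\,w_t\int_0^t w_s^{-2}\,ds$ and $w_t>0$, $\int_0^t w_s^{-2}\,ds>0$ for $t>0$, equality forces $A=0$. Hence $A=0$ is the unique minimizer, i.e. the cost minimization approach selects the equilibrium with $\mu\equiv0$. It remains to record its explicit form: for $A=0$ one has $\alpha_t=-\eta_t X_t$, the state solves $dX_t=(\kappa-\eta_t)X_t\,dt+\sigma\,dW_t$, $X_0=0$, and integrating with the factor $w_t^{-1}$ yields $d(w_t^{-1}X_t)=\sigma w_t^{-1}\,dW_t$, so $X_t=w_t\int_0^t\sigma w_s^{-1}\,dW_s$, as claimed. (Equivalently one could read $J^A$ off the quadratic value function $V(t,x)=\tfrac12\eta_t x^2+h_tx+c_t$ of the LQ control problem, but the decomposition above makes the sign of $J^A-J^0$ manifest.)

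No serious conceptual obstacle arises: the proof is a self-contained linear-quadratic computation. The only delicate points are bookkeeping — correctly matching the deterministic shift $m^A$ with the equilibrium mean-field curve $\mu^A$ so that the terminal penalty $g(\mu^A_T)=A$ enters with the right value, checking the elementary differential identities for $w_t^{\pm1}$ coming from the Riccati function $\eta$, and noting that $X^0$ is a square-integrable Gaussian process so that each $J^A$ is finite and the quadratic expansion is legitimate.
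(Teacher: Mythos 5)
Your proof is correct and follows essentially the same route as the paper: both reduce the comparison to showing that $J^A-J^0$ is a nonnegative quadratic in $A$ that vanishes only at $A=0$, using the explicit Gaussian structure of the equilibrium state. Your decomposition $X^A=X^0+m^A$ (zero-mean fluctuation plus deterministic shift) is exactly the paper's splitting of $J_A$ via $\mathbb{E}[X_t]=\mu_t$ and $\mathbb{V}[X_t]$; the only cosmetic difference is that your bookkeeping makes the positivity of the $A^2$-coefficient manifest as a sum of squares $\int_0^T\frac12\bigl((\beta^A_t)^2+(m^A_t)^2\bigr)dt+\frac12(m^A_T+A)^2$, whereas the paper exhibits the same quantity by completing the square explicitly.
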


\begin{proof}
Given, $A \in \{ -1,0,1\},$ the dynamics of the representative player, $(X_t)_{t\in[0,T]}$, and the controls, $ (\alpha_t)_{t\in[0,T]} $, at equilibrium are given by 
\begin{equation*}
\begin{cases}
&X_0 = 0, 
\\
&dX_t  = \big[ (\kappa-\eta_t)X_t - A w_t^{-1}  \big]dt + \sigma dW_t, \quad  \forall t \in [0,T] \notag 
\\
&\alpha_t = -\eta_t X_t - A w_t^{-1} \quad  \forall t \in [0,T].
\end{cases}
\end{equation*}
We recall that the cost functional is given by
$$ J(\alpha)   = \mathbb{E} \Bigg[ \int_0^T \frac{1}{2} [\alpha_t^2 + ( X_t)^2 ] dt  + \frac{1}{2}(X_T + g(\mu_T))^2 \Bigg], \quad \alpha \in \mathcal{H}^2 . $$
By replacing the control at equilibrium in the cost functional, we get (with an obvious notation 
for $J_{A}$)
$$ J_{A} =  \mathbb{E} \Bigg[ \int_0^T \frac{1}{2} [(-\eta_t X_t - w_{t}^{-1} A )^2 + ( X_t)^2 ] dt  + \frac{1}{2}(X_T + A)^2 \Bigg], \quad A \in \{-1,0,1\}. $$
In order to expand $J_{A}$, we recall that 
\begin{equation*}
\begin{split}
&{\mathbb E}[X_{t}] = \mu_{t} =  
-A w_{t }\int_0^t w^{-2}_s ds,
\\
&{\mathbb V}[X_{t}]= {\mathbb E}\bigl[\bigl( X_{t} - \mu_{t}\bigr)^2 \bigr] = w_{t}^2 \sigma^2 \int_{0}^t w_{s}^{-2} ds,
\quad \forall t \in [0,T].
\end{split}
\end{equation*}
We then expand $J_{A}$
 as follows
\begin{align*}
J_{A}   &=  \mathbb{E} \Bigg[ \int_0^T \frac{1}{2} [(-\eta_t X_t - w_{t}^{-1} A )^2 + ( X_t)^2 ] dt  + \frac{1}{2}(X_T + A)^2 \Bigg] \notag \\
& = \frac{1}{2} \mathbb{E} \Bigg[ \int_0^T \bigg( (1+\eta_t^2)X^2_t + 2 A \eta_t w_t^{-1} X_t + A^2 w_t^{-2}  \bigg)  dt  \Bigg] + \frac{1}{2} \mathbb{E} \big[ (X_T + A)^2 \big] \notag \\
& = \frac{1}{2} \int_0^T \bigg( (1+\eta_t^2)\mathbb{E}[(X_t-\mu_t)^2] + (1+\eta_t^2) \mu_t^2  +2 A \eta_t w_t^{-1} \mu_t + A^2 w_t^{-2}  \bigg)  dt  
+ \frac{1}{2} \mathbb{E} \big[ (X_T + A)^2 \big],
 \notag 
\end{align*}
and then \begin{align*}
J_{A} & =  \frac{A^2}{2} \bigg[ \int_0^T  
\biggl( (1+\eta_t^2) w^2_t \bigg( \int_0^t w_s^{-2} ds\bigg)^2  - 2 \eta_t \bigg( \int_0^t w_s^{-2} ds\bigg) + w_t^{-2}\biggr) dt \bigg]  
\\
&\hspace{1cm}
+ \frac{A^2}{2} \bigg( 1 - w_{T}\int_0^T w_s^{-2}ds \bigg)^2   + \frac{1}{2} w_{T}^2 \int_0^T \sigma^2 w_t^{-2}dt + \frac{1}{2} \int_0^T (1+\eta_t^2) \sigma^2 w_t^{2} \bigg( \int_0^t w_s^{-2} ds \bigg) dt,
\end{align*}
where we used the fact that
\begin{align*}
\mathbb{E} \big[ (X_T + A)^2 \big] 
&= \bigl( A + \mu_{T} \bigr)^2
+\mathbb{E} \big[ (X_T - \mu_{T})^2 \big] 
\\
& = A^2 \bigg( 1 - w_{T} \int_0^T w_s^{-2}ds \bigg)^2  + w_{T}^2 \int_0^T \sigma^2 w_t^{-2}dt, \quad \forall t \in [0,T].
\end{align*}
In order to conclude, it remains to take into account the fact that 
\begin{equation*}
\begin{split} 
&(1+\eta_t^2) w^2_t \bigg( \int_0^t w_s^{-2} ds\bigg)^2  - 2 \eta_t \bigg( \int_0^t w_s^{-2}ds \bigg) + w_t^{-2}
\\
&= w^2_t \bigg( \int_0^t w_s^{-2} ds\bigg)^2
+ \biggl( \eta_{t} w_{t} \int_0^t w_s^{-2} ds - w_{t}^{-1} \biggr)^2 > 0, \quad \forall t \in [0,T].
\end{split}
\end{equation*}
One concludes that $J_{A}$, for $A \in \{-1,0,1\},$ is minimal when $A = 0$, and $J_{\pm1} > J_{0}$, which completes the proof. 
\end{proof}

\section{Master equation and related PDE estimates}
\label{se:PDE}

In this section, 
we consider the case $\sigma_{0} \in (0,1)$ in the LQ-MFG under study. By \cite{Foguen}, we know that there is a unique 
equilibrium to the LQ-MFG, which is described by FBSDE (\ref{fbsde2}). 

Since (\ref{fbsde2}) is uniquely solvable, we can use Ma-Protter-Yong's four-step-scheme 
\cite{MaProtterYong}
to represent the solution, see also \cite{MaWuZhangZhang}. The four-step-scheme provides a so-called decoupling field that decouples the two forward and backward equations of the FBSDE, meaning that it permits to represent the backward component of the solution in terms of the forward one. Due to the diffusive effect of the Brownian motion $(B_{t})_{t \in [0,T]}$, such a decoupling field is smooth. Through the Cole-Hopf transformation (which we make clear below), it can be represented explicitly and then inserted into the FBSDE (\ref{fbsde2}): This allows to read the forward component of (\ref{fbsde2}) as a standard a SDE. 

\subsection{Master equation}

The first step is to make the connection between the aforementioned decoupling field and 
the notion of master equation.

The concept of master equation was introduced by Lions \cite{Lions} in his lectures on mean-field games 
at \textit{Coll\`ege de France}. Generally speaking, the master equation is an equation for the 
value of the mean-field game. It is regarded as a function of the initial conditions of the game, which include: Initial time, 
initial state of the representative player and initial state of the population. To ensure that the value function indeed makes sense, equilibria must be unique. 

In our case, $\sigma_{0} \in (0,1)$ and the LQ-MFG has a unique equilibrium. Still, we prefer to write down an equation 
for the (optimal) feedback function of the LQ-MFG instead of an equation for the value function. In fact, both are related with one
another through a standard minimization argument of the Hamiltonian and, in our framework,  
the (optimal) feedback function is given by the opposite of the derivative of the value function, the derivative being taken
with respect to the private state of the representative player. We refer to \cite[Chapters 3 and 4]{CarmonaDelarue_book_I}
for details. 

Actually, we know from Proposition \ref{prop1}
that the equilibrium strategy of the LQ-MFG must be of the form 
\begin{equation*}
\alpha_{t} = - \eta_{t} X_{t} - w_{t}^{-1} h_{t}^{\xi,\sigma_{0}},
\quad \forall t \in [0,T],
\end{equation*}
where $h^{\xi,\sigma_{0}}$ solves the backward equation in 
\eqref{fbsde2} (with a general initial condition $\xi$ instead of $0$ for the forward process). 

Now, as we recalled right above, it is a standard fact from FBSDE theory, see for instance \cite{Delarue02,MaProtterYong}, that 
the backward process can be put in the form
\begin{equation*}
h_{t}^{\xi,\sigma_{0}} = \theta^{\sigma_{0}}(t,\mu_{t}^{\xi,\sigma_{0}}), \quad
\forall t \in [0,T].
\end{equation*}
Here, $\theta^{\sigma_0}$ is called the decoupling field of the FBSDE 
\eqref{fbsde2}. At the end of the day, the function
\begin{equation*}
[0,T] \times \RR \times {\mathcal P}_{1}(\RR) \ni (t,x,m) \mapsto 
- \eta_{t} x - w_{t}^{-1} \theta^{\sigma_{0}}(t,x,\bar m),
\end{equation*}
is the right candidate for solving the master equation (for the feedback function). 
Here ${\mathcal P}_{1}(\RR)$ is the space of probability 
measures on $\RR$ with a finite first moment and $\bar m$ stands for the mean 
of $m$ when $m \in {\mathcal P}_{1}(\RR)$. 
In fact, instead of writing down the full master equation (which is a difficult object, see 
\cite{CardaliaguetDelarueLasryLions,ChassagneuxCrisanDelarue,GangboSwiech}
and \cite[Chapter 5]{CarmonaDelarue_book_II}), we just write down the equation for 
$\theta^{\sigma_{0}}$, which is enough for our own purpose. 
To do so, notice from 
\cite{Delarue02,DelarueGuatteri}, see also 
the book 
\cite{Ladyzhenskaya}, 
that $\theta^{\sigma_{0}}$ belongs to $\mathcal{C}^{1,2}( [0,T) \times \mathbb{R} ; \mathbb{R})
 \cap {\mathcal C}([0,T] \times {\mathbb R};{\mathbb R})$
and is a 
 classical solution to the following quasilinear parabolic PDE with terminal condition :
\begin{align}
\label{PDE}
\begin{cases}
\forall (t,x) \in [0,T) \times \mathbb{R},
\\
\partial_t \theta^{\sigma_0}(t,x) - w^{-2}_t \theta^{\sigma_0}(t,x) \partial_x \theta^{\sigma_0}(t,x) + \frac{1}{2} \sigma_0^2 w^{-2}_t \partial^{2}_{xx}\theta^{\sigma_0}(t,x) = 0, 
\\
\theta^{\sigma_0}(T, x) = g(x), \hspace{2mm} \forall x \in \mathbb{R}. 
\end{cases}
\end{align}
This PDE is well-known in the literature: it is a Burgers type PDE. For theoretical and numerical entry points on the analysis of Burgers type PDEs, we refer to the textbooks \cite{HormanderBook,Lax}, and to the article \cite{BossyTalay}. It is uniquely solvable and a representation of its solution is obtained through the Cole-Hopf transformation. For every $t \in [0,T)$, we write $r_t = \int_t^T w_s^{-2}ds$.  This representation reads as follows:
\begin{equation}
\label{teta0}
\theta^{\sigma_0}(t,x) =  \frac{\int_{\mathbb{R}} (\frac{x-y}{r_t}) \exp( \sigma_0^{-2} (-\int_0^y g(v)dv - \frac{(x-y)^2}{2 r_t}))dy}{\int_{\mathbb{R}}\exp( \sigma_0^{-2} (-\int_0^y g(v)dv - \frac{(x-y)^2}{2 r_t}))dy} .
\end{equation}
Observe from an obvious change of variable that we can easily reduce 
\eqref{PDE} (respectively \eqref{teta0}) to the classical forward viscous Burgers equation (respectively to the classical Cole-Hopf formula). It thus suffices to consider
the function $(t,x) \in [0,r_{0}] \mapsto \theta^{\sigma_{0}}(r_{t}^{\circ -1},x)$, where 
$[0,r_{0}] \ni t \mapsto r_{t}^{\circ -1}$ is the converse of $[0,T] \ni t \mapsto r_t$. This is extremely useful in order to invoke known results from the literature on standard inviscid and viscous Burgers equations. 

\subsection{A priori bounds}
We here collect several key estimates for $\theta^{\sigma_{0}}$. The first one is 

\begin{lem}
\label{lem:bounded}
The function $\vert \theta^{\sigma_{0}} \vert$ is bounded by 1, for any $\sigma_{0} >0$.
\end{lem}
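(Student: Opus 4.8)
The plan is to exploit the Cole--Hopf representation \eqref{teta0} together with the fact that the terminal datum $g$ is bounded by $1$, and to interpret the right-hand side of \eqref{teta0} as an average against a probability measure. Write $p_{t}(y) \,\mathrm{d}y$ for the probability measure on $\mathbb{R}$ with density proportional to $\exp\bigl(\sigma_{0}^{-2}(-\int_{0}^{y} g(v)\,\mathrm{d}v - (x-y)^{2}/(2r_{t}))\bigr)$; this is a genuine probability measure since the exponent tends to $-\infty$ quadratically in $y$ (recall $g$ is bounded, so $\int_{0}^{y} g$ grows at most linearly), making the normalizing integral finite and positive. Then \eqref{teta0} reads $\theta^{\sigma_{0}}(t,x) = \int_{\mathbb{R}} \tfrac{x-y}{r_{t}}\, p_{t}(y)\,\mathrm{d}y$, an average of the quantity $(x-y)/r_{t}$.

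The key step is to recognize $(x-y)/r_{t}$ as (minus) the $y$-derivative, up to the factor $\sigma_{0}^{2}$, of the quadratic part of the exponent, and to perform an integration by parts — equivalently, to differentiate the Cole--Hopf numerator/denominator structure. Concretely, set $\Phi(y) := -\int_{0}^{y} g(v)\,\mathrm{d}v$, so that $\Phi'(y) = -g(y) \in [-1,1]$. Writing the exponent as $\sigma_{0}^{-2}\bigl(\Phi(y) - (x-y)^{2}/(2r_{t})\bigr)$, we have $\partial_{y}\bigl[\exp(\cdots)\bigr] = \sigma_{0}^{-2}\bigl(\Phi'(y) + (x-y)/r_{t}\bigr)\exp(\cdots)$. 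Integrating over $\mathbb{R}$ the left-hand side vanishes (the exponential decays at $\pm\infty$), so $\int_{\mathbb{R}}\bigl(\tfrac{x-y}{r_{t}}\bigr)\exp(\cdots)\,\mathrm{d}y = -\int_{\mathbb{R}}\Phi'(y)\exp(\cdots)\,\mathrm{d}y = \int_{\mathbb{R}} g(y)\exp(\cdots)\,\mathrm{d}y$. Dividing by the normalizing denominator yields the clean identity
\begin{equation*}
\theta^{\sigma_{0}}(t,x) = \int_{\mathbb{R}} g(y)\, p_{t}(y)\,\mathrm{d}y,
\end{equation*}
i.e. $\theta^{\sigma_{0}}(t,x)$ is the $p_{t}$-average of $g$.

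From here the bound is immediate: since $|g(y)| \leq 1$ for all $y \in \mathbb{R}$ by the explicit formula \eqref{eq:g}, and $p_{t}$ is a probability density, $|\theta^{\sigma_{0}}(t,x)| \leq \int_{\mathbb{R}} |g(y)|\, p_{t}(y)\,\mathrm{d}y \leq 1$ for every $(t,x) \in [0,T)\times\mathbb{R}$; continuity of $\theta^{\sigma_{0}}$ up to $t=T$ (where it equals $g$) extends the bound to all of $[0,T]\times\mathbb{R}$.

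I do not expect any serious obstacle here. The only points requiring minimal care are (i) justifying that the normalizing integral in \eqref{teta0} converges — which follows from the linear growth bound on $\int_{0}^{y} g$ against the quadratic term $(x-y)^{2}/(2r_{t})$, valid since $r_{t} > 0$ for $t < T$ — and (ii) justifying the integration by parts, i.e. that the boundary terms at $\pm\infty$ vanish, which again follows from the quadratic decay of the exponential. Alternatively, one can bypass the integration-by-parts identity entirely and argue by the maximum principle: $\theta^{\sigma_{0}}$ solves the parabolic PDE \eqref{PDE}, the constant functions $\pm 1$ are a super-/sub-solution pair (the first-order term $-w_{t}^{-2}\theta\,\partial_{x}\theta$ and the diffusion term both vanish on constants), and $|g| \leq 1$ at the terminal time, so the parabolic comparison principle gives $-1 \leq \theta^{\sigma_{0}} \leq 1$ on $[0,T]\times\mathbb{R}$. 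Either route is short; I would present the Cole--Hopf computation as the primary argument since it also prepares the probabilistic representations used later in the paper.
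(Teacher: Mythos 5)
Your proof is correct, but it takes a different route from the paper's. The paper disposes of this lemma in one line via the probabilistic representation of the decoupling field: $\theta^{\sigma_0}(t,x)=\mathbb{E}\bigl[g(\mu_T^{t,x,\sigma_0})\bigr]$, i.e.\ $\theta^{\sigma_0}$ is obtained by transporting $g$ along the forward component of \eqref{fbsde2} (this representation is written out explicitly in the proof of Lemma \ref{lem:non:decreasing}, see \eqref{eq:t,x,SDE}), and $\vert g\vert\leq 1$ immediately gives the bound. You instead work directly from the Cole--Hopf formula \eqref{teta0}: your integration by parts is valid (the exponent decays quadratically since $\int_0^y g$ grows at most linearly, so the normalizing integral converges and the boundary terms vanish), and it yields the clean identity $\theta^{\sigma_0}(t,x)=\int_{\mathbb{R}} g(y)\,p_t(y)\,dy$ for the Gibbs-type probability density $p_t$, from which $\vert\theta^{\sigma_0}\vert\leq\sup\vert g\vert=1$ follows. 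Note that your averaging measure $p_t$ is not the same probability measure as the law of $\mu_T^{t,x,\sigma_0}$ used by the paper, so the two representations are genuinely distinct; yours is more explicit and self-contained (it needs only formula \eqref{teta0} and elementary calculus, no FBSDE or Feynman--Kac theory), at the cost of being longer, while the paper's is shorter but leans on the decoupling-field machinery already set up. Your maximum-principle alternative is also sound, since the constants $\pm1$ are indeed super- and sub-solutions of \eqref{PDE} dominating $g$ at the terminal time.
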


\begin{proof}
The proof follows from the fact that $\vert g\vert$ is bounded by $1$ and that $\theta^{\sigma_{0}}$ is obtained by transporting $g$ along the forward component of 
\eqref{fbsde2}.
\end{proof}

Things are much worse for the first-order derivative (and in fact this is the reason why the analysis of the case 
$\sigma_{0}=0$ is so difficult). In fact, by standard results in 
the 
theory of nonlinear parabolic equations, see
for instance the monograph \cite{Ladyzhenskaya}, see also \cite{Delarue02,DelarueGuatteri,MaProtterYong} for a probabilistic 
point of view, ${\theta}^{\sigma_{0}}$ is Lipschtiz continuous in 
space, uniformly in time, but the \textit{Lipschitz constant depends on $\sigma_{0}$!}
Still, we have the following bound that gives a bound on the rate of explosion as $\sigma_{0}$ tends to $0$. 

\begin{lem}
\label{lem:gradient}
There exists a constant $C$ such that
\begin{equation*}
\vert \partial_{x} \theta^{\sigma_{0}}(t,x) \vert \leq \frac{C}{\sigma_{0}^2}, \quad \forall (t,x) \in [0,T) \times \RR. 
\end{equation*} 
\end{lem}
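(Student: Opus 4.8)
The plan is to derive the bound directly from the Cole--Hopf representation \eqref{teta0}, treating the quotient as a conditional expectation with respect to a Gibbs-type probability measure and using the fact that the ``potential'' $G(y) := \int_0^y g(v)\,dv$ is convex outside a bounded region. First I would fix $t \in [0,T)$, write $r = r_t \in (0, r_0]$, and introduce on $\RR$ the probability density
\begin{equation*}
p_{t,x}(y) \;=\; \frac{\exp\bigl( -\sigma_0^{-2}\bigl( G(y) + \tfrac{(x-y)^2}{2r}\bigr)\bigr)}{\int_{\RR}\exp\bigl( -\sigma_0^{-2}\bigl( G(z) + \tfrac{(x-z)^2}{2r}\bigr)\bigr)\,\ud z},
\end{equation*}
so that $\theta^{\sigma_0}(t,x) = \E_{p_{t,x}}\bigl[(x-Y)/r\bigr]$, where $Y$ denotes a random variable with density $p_{t,x}$. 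Differentiating in $x$ under the integral sign (legitimate since the integrals converge: $G$ has at most linear growth as $|g|\le 1$, so the Gaussian factor dominates), one gets the standard variance identity
\begin{equation*}
\partial_x \theta^{\sigma_0}(t,x) \;=\; \frac{1}{r} \;-\; \frac{1}{\sigma_0^2 r^2}\,\mathbb{V}_{p_{t,x}}(Y),
\end{equation*}
where $\mathbb{V}_{p_{t,x}}(Y) = \E_{p_{t,x}}[(Y - \E_{p_{t,x}}[Y])^2] \ge 0$. Hence $\partial_x \theta^{\sigma_0}(t,x) \le 1/r \le 1/r_0$, which already gives the one-sided (upper) bound with no negative power of $\sigma_0$ at all.

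The remaining task is the lower bound on $\partial_x \theta^{\sigma_0}$, equivalently an upper bound on $\mathbb{V}_{p_{t,x}}(Y)$ of order $\sigma_0^2$. Here I would use the explicit form of $g$ from \eqref{eq:g}: $G(y) = \tfrac{1}{2 r_\delta} y^2$ for $|y|\le r_\delta$ and $G(y) = |y| - \tfrac{r_\delta}{2}$ for $|y| > r_\delta$. In particular $G$ is convex on all of $\RR$ (it is the primitive of the nondecreasing function $-g$... wait, $g$ is nonincreasing, so $-g$ is nondecreasing, so $G$ is convex). Consequently the exponent $y \mapsto G(y) + (x-y)^2/(2r)$ is $\tfrac{1}{r}$-strongly convex, uniformly in $x$ and in $r\le r_0$ we even get strong convexity constant $\ge 1/r_0$. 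The Gibbs measure $p_{t,x} \propto \exp(-\sigma_0^{-2} U(y))$ with $U$ being $(1/r_0)$-strongly convex then satisfies a Brascamp--Lieb / log-concavity variance inequality: $\mathbb{V}_{p_{t,x}}(Y) \le \sigma_0^2 \cdot r_0$ (one can either cite the Brascamp--Lieb inequality for log-concave measures, or reduce via the change of variables $y = \sigma_0 u$ to a measure $\propto \exp(-\widetilde U(u))$ with $\widetilde U$ strongly convex with constant $\sigma_0^2/r_0 \cdot \sigma_0^{-2} = 1/r_0$... let me just invoke Brascamp--Lieb directly on $p_{t,x}$: for $\ud\nu \propto e^{-V}$ with $V'' \ge c > 0$ one has $\mathbb{V}_\nu \le 1/c$; here $V = \sigma_0^{-2} U$ so $V'' \ge \sigma_0^{-2}/r_0$ and thus $\mathbb{V}_{p_{t,x}}(Y) \le \sigma_0^2 r_0$). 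Plugging back,
\begin{equation*}
\partial_x \theta^{\sigma_0}(t,x) \;\ge\; \frac{1}{r} - \frac{\sigma_0^2 r_0}{\sigma_0^2 r^2} \;=\; \frac{1}{r} - \frac{r_0}{r^2} \;\ge\; -\frac{r_0}{r^2}.
\end{equation*}

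This last display is the obstacle I anticipate: the bound it produces, $|\partial_x\theta^{\sigma_0}| \le r_0/r_t^2$, is \emph{uniform in $\sigma_0$} and therefore far better than $C/\sigma_0^2$ — but it blows up as $t \uparrow T$ since $r_t \to 0$. So the strong-convexity argument, while clean, does not by itself cover a neighborhood of the terminal time, because $G$ there is merely convex and not \emph{strongly} convex (the quadratic well has curvature $1/r_\delta$, which is positive, so actually $G'' \ge 1/r_\delta$ on $|y| < r_\delta$ but $G'' = 0$ for $|y| > r_\delta$, in the distributional sense $G$ carries positive mass, so $G$ is convex but not uniformly strongly convex — I should be careful). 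The honest fix for times near $T$ is to fall back on the crude estimate: since $|g|\le 1$ and $g$ is $(1/r_\delta)$-Lipschitz... no, $g'$ jumps. Near $t=T$, the standard parabolic gradient estimate (or a direct computation from \eqref{teta0}, bounding $\mathbb{V}_{p_{t,x}}(Y) \le \E_{p_{t,x}}[(Y-x)^2]$ and noting the Gaussian-like tail gives $\E_{p_{t,x}}[(Y-x)^2] \le C(\sigma_0^2 r_t + r_t^2)$) yields $|\partial_x\theta^{\sigma_0}(t,x)| \le C(1/r_t + 1)$ near $t = T$... Actually, the cleanest route for the full statement is: on $[0, T) \times \RR$, combine (i) for $t \le \delta$, say, where $r_t \ge r_\delta$... hmm. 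Let me reconsider — I think the intended proof is simply the variance identity plus a \emph{global} bound $\mathbb{V}_{p_{t,x}}(Y) \le C$ (a constant, using that $G(y) \ge |y| - r_\delta/2$ forces exponential-in-$\sigma_0^{-2}$ tails, but the variance of such a measure is $O(\sigma_0^4)$ for small $\sigma_0$, hence bounded), giving $|\partial_x\theta| \le 1/r_t + C/(\sigma_0^2 r_t^2) \le C'/\sigma_0^2$ after absorbing the $r_t$ factors — but this requires $r_t$ bounded below, which fails as $t\to T$! So near $T$ one genuinely needs a separate, softer argument. The resolution I would actually write: reduce via the change of variable mentioned after \eqref{teta0} to the standard viscous Burgers equation on $[0,r_0]\times\RR$ with bounded Lipschitz initial datum $g$, and there the classical gradient estimate for viscous Burgers (e.g. from \cite{Ladyzhenskaya} or by a maximum-principle/Bernstein argument applied to $\partial_x \theta^{\sigma_0}$, using that $g$ is Lipschitz with some constant $L = 1/r_\delta$) gives $\|\partial_x\theta^{\sigma_0}(t,\cdot)\|_\infty \le L/(1 - L(r_0 - t_r))$ type bounds only up to a finite time, but for the \emph{viscous} equation with $\sigma_0 > 0$ the parabolic smoothing gives a bound that degenerates like $C/\sigma_0^2$ globally in time — precisely the claimed estimate. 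So the main obstacle is bookkeeping the two regimes (interior $t$, where strong convexity of $G$ wins and one can do better, versus $t$ near $T$, where one must pay the $\sigma_0^{-2}$), and I expect the write-up to invoke the standard a priori gradient estimate for viscous Burgers / quasilinear parabolic equations with a constant depending on $\sigma_0$ through the diffusivity, made explicit via Cole--Hopf, rather than re-deriving it.
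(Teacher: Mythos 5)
Your variance identity $\partial_x\theta^{\sigma_0}(t,x)=\frac{1}{r_t}-\frac{1}{\sigma_0^2 r_t^2}\,\mathbb{V}_{p_{t,x}}(Y)$ is correct, but the step that is supposed to control the variance rests on a sign error that is fatal to the whole strategy. The potential $G(y)=\int_0^y g(v)\,dv$ has $G''=g'\le 0$ because $g$ in \eqref{eq:g} is \emph{non-increasing}; so $G$ is \emph{concave} (explicitly, $G(y)=-y^2/(2r_\delta)$ for $|y|\le r_\delta$), not convex, and the exponent $G(y)+(x-y)^2/(2r_t)$ has second derivative $1/r_t-1/r_\delta$ on the central interval, which is \emph{negative} whenever $r_t>r_\delta$, i.e.\ for all $t<\delta$. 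In that regime the Gibbs measure $p_{t,x}$ is not log-concave at all: as the Appendix computation shows, $h$ has two competing global maxima at $y_1^*=x-r_t$ and $y_2^*=x+r_t$, so for $x$ near $0$ the measure is bimodal and $\mathbb{V}_{p_{t,x}}(Y)$ is of order $r_t^2$, not $\sigma_0^2 r_t$. Plugging this into your identity gives $\partial_x\theta^{\sigma_0}(t,0)\approx 1/r_t-\sigma_0^{-2}$, which shows that the rate $C/\sigma_0^2$ is actually attained; your intermediate conclusion $|\partial_x\theta^{\sigma_0}|\le r_0/r_t^2$ uniformly in $\sigma_0$ would contradict the fact that $\theta^{\sigma_0}$ converges to the entropy solution \eqref{teta}, which is discontinuous at $x=0$ for $t\le\delta$. (A second, more minor, slip: from $r_t\le r_0$ you get $1/r_t\ge 1/r_0$, so your one-sided bound $\partial_x\theta^{\sigma_0}\le 1/r_0$ does not follow from the identity — though it is true for free, since $\partial_x\theta^{\sigma_0}\le 0$ by Lemma \ref{lem:non:decreasing}.)

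The "resolution" you sketch at the very end is in fact the paper's entire proof, and it needs no case analysis in $t$: one performs the parabolic rescaling $\hat\theta^{\sigma_0}(t,x)=\theta^{\sigma_0}(\sigma_0^2 t,\sigma_0^2 x)$, which turns \eqref{PDE} into a quasilinear equation that is uniformly parabolic with viscosity of order one, with bounded nonlinearity ($|\hat\theta^{\sigma_0}|\le 1$ by Lemma \ref{lem:bounded}) and terminal datum $x\mapsto g(\sigma_0^2x)$ whose Lipschitz constant is $\sigma_0^2/r_\delta\le 1/r_\delta$, uniformly in $\sigma_0$. The standard gradient estimate for such equations (cited from \cite{Ladyzhenskaya} or the FBSDE literature) gives $|\partial_x\hat\theta^{\sigma_0}|\le C$ uniformly, and undoing the scaling produces exactly the factor $\sigma_0^{-2}$. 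If you want a self-contained Cole--Hopf proof, you would have to bound $\mathbb{V}_{p_{t,x}}(Y)\le C r_t^2$ directly (e.g.\ via $\mathbb{V}_{p_{t,x}}(Y)\le\E_{p_{t,x}}[(Y-x)^2]$ and a Laplace-type localization around $y_1^*,y_2^*$, plus a separate treatment of $r_t\to 0$), accepting that the resulting bound is $O(\sigma_0^{-2})$ and cannot be improved near the shock.
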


\begin{proof}
We perform the change of variable:
\begin{equation*}
\hat{\theta}^{\sigma_{0}}(t,x) = {\theta}^{\sigma_{0}}(\sigma_{0}^2 t, \sigma_{0}^2 x),
\quad
\forall (t,x) \in \bigl[0,\frac{T}{\sigma_{0}^2}\bigr) \times \mathbb{R}.
\end{equation*}
Then,
\begin{align*}
\begin{cases}
 \partial_t  \hat\theta^{\sigma_0}(t,x) -   w^{-2}_t \hat{\theta}^{\sigma_0}(t,x) \partial_x \hat{\theta}^{\sigma_0}(t,x) + \frac{1}{2}   w^{-2}_t \partial^{2}_{xx}\hat{\theta}^{\sigma_0}(t,x) & = 0, \hspace{3mm} \forall (t,x) \in \bigl[0,\frac{T}{\sigma_{0}^2}\bigr) \times \mathbb{R}, 
\\
\hat{\theta}^{\sigma_0}\bigl( \frac{T}{\sigma_{0}^2}, x\bigr) = g(\sigma_{0}^2 x), \hspace{2mm} \forall x \in \mathbb{R}. 
\end{cases}
\end{align*}
Now, the result follows from standard PDE estimates for uniformly parabolic equations, see the same references as before:
\cite{Ladyzhenskaya} for PDE arguments and \cite{Delarue02,DelarueGuatteri,MaProtterYong} for the probabilistic point of view.
\end{proof}

Although the gradient may blow up, we have in fact an upper bound for it.

\begin{lem}
\label{lem:non:decreasing}
For any $\sigma_{0} \in (0,1)$,
the function $\theta^{\sigma_{0}}$ is non-increasing in $x$. 
\end{lem}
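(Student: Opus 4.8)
The plan is to exploit the $x$-translation invariance of the Burgers equation \eqref{PDE} together with a comparison principle. The first observation is that the coefficients $w_t^{-2}$ and $\sigma_0^2$ in \eqref{PDE} do not depend on the space variable, so that for every $a\geq 0$ the shifted function $\theta_a^{\sigma_0}(t,x):=\theta^{\sigma_0}(t,x+a)$ is again a classical solution of \eqref{PDE}, now with terminal condition $x\mapsto g(x+a)$. On the other hand, the explicit formula \eqref{eq:g} shows that $g$ is non-increasing on $\mathbb{R}$: it is constant equal to $1$ on $(-\infty,-r_\delta]$, affine with slope $-1/r_\delta$ on $[-r_\delta,r_\delta]$, and constant equal to $-1$ on $[r_\delta,+\infty)$. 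Hence $g(x+a)\leq g(x)$ for all $x\in\mathbb{R}$ and $a\geq 0$; in other words the terminal datum of $\theta_a^{\sigma_0}$ lies below that of $\theta^{\sigma_0}$.

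Next I would set $v:=\theta^{\sigma_0}-\theta_a^{\sigma_0}$ and subtract the two copies of \eqref{PDE}. Writing $\theta^{\sigma_0}\partial_x\theta^{\sigma_0}-\theta_a^{\sigma_0}\partial_x\theta_a^{\sigma_0}=\tfrac12\partial_x\big((\theta^{\sigma_0}+\theta_a^{\sigma_0})v\big)$, one sees that $v$ solves the linear backward parabolic equation
\begin{equation*}
\partial_t v+\tfrac12\sigma_0^2 w_t^{-2}\partial^{2}_{xx}v+B(t,x)\partial_x v+C(t,x)\,v=0,\qquad v(T,\cdot)=g-g(\cdot+a)\geq 0,
\end{equation*}
with $B:=-\tfrac12 w_t^{-2}(\theta^{\sigma_0}+\theta_a^{\sigma_0})$ and $C:=-\tfrac12 w_t^{-2}(\partial_x\theta^{\sigma_0}+\partial_x\theta_a^{\sigma_0})$. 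By Lemma \ref{lem:bounded} and the positivity and continuity of $w$ on $[0,T]$, the coefficient $B$ is bounded; by Lemma \ref{lem:gradient}, the coefficient $C$ is bounded as well (with a bound that blows up as $\sigma_0\to 0$, which is harmless since $\sigma_0$ is fixed here); and $v$ itself is bounded, again by Lemma \ref{lem:bounded}.

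I would then invoke the maximum principle for uniformly parabolic linear equations with bounded coefficients to get $v\geq 0$ on $[0,T]\times\mathbb{R}$, that is $\theta^{\sigma_0}(t,x)\geq\theta^{\sigma_0}(t,x+a)$ for all $t\in[0,T]$, $x\in\mathbb{R}$ and $a\geq 0$; since $a\geq 0$ is arbitrary, this is exactly the claimed monotonicity. Equivalently, one can use the Feynman--Kac representation $v(t,x)=\mathbb{E}\big[\exp\big(\int_t^T C(s,\hat X_s)\,\mathrm{d}s\big)\,v(T,\hat X_T)\big]$, where $\hat X$ is the diffusion with generator $\tfrac12\sigma_0^2 w_t^{-2}\partial^{2}_{xx}+B\partial_x$ started from $x$ at time $t$; nonnegativity of $v(T,\cdot)$ then yields $v\geq 0$ immediately.

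The only genuinely delicate point is the validity of the maximum principle on the unbounded domain $\mathbb{R}$: since the zeroth-order coefficient $C$ may change sign, one first absorbs it through the substitution $\tilde v:=e^{\lambda(T-t)}v$ with $\lambda$ large, and the resulting comparison with the zero function is then justified by a standard Gaussian-barrier argument using the boundedness of $v$ (this is, again, precisely what the probabilistic representation encodes). Everything else is routine. One could alternatively avoid the shift altogether by differentiating \eqref{PDE} in $x$ and running the same maximum principle directly on $u:=\partial_x\theta^{\sigma_0}$, with terminal datum $\partial_x\theta^{\sigma_0}(T,\cdot)=g'\leq 0$; the translation trick is slightly cleaner in that it sidesteps the merely a.e.\ defined derivative of the non-smooth datum $g$.
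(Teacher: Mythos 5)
Your argument is correct, but it takes a genuinely different route from the paper's. The paper proves the lemma probabilistically in three lines: it uses the decoupling-field representation $\theta^{\sigma_0}(t,x)=\mathbb{E}[g(\mu_T^{t,x,\sigma_0})]$ from \eqref{eq:t,x,SDE}, invokes the comparison theorem for one-dimensional SDEs with Lipschitz coefficients to get that the flow $x\mapsto \mu_T^{t,x,\sigma_0}$ is non-decreasing almost surely, and concludes from the monotonicity of $g$. You instead work entirely on the PDE side: translation invariance of \eqref{PDE} in $x$ turns the claim into a comparison between two solutions with ordered terminal data, and the identity $\theta^{\sigma_0}\partial_x\theta^{\sigma_0}-\theta_a^{\sigma_0}\partial_x\theta_a^{\sigma_0}=\tfrac12\partial_x\bigl((\theta^{\sigma_0}+\theta_a^{\sigma_0})v\bigr)$ correctly reduces the difference to a linear uniformly parabolic equation whose coefficients are bounded for fixed $\sigma_0$ by Lemmas \ref{lem:bounded} and \ref{lem:gradient}; the maximum principle (or the Feynman--Kac formula you state) then yields $v\geq 0$. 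Both proofs hinge on the same single input, namely that $g$ is non-increasing. What the paper's route buys is brevity, since the decoupling-field machinery and the Lipschitz-in-space bound on $\theta^{\sigma_0}$ are already in place and the monotonicity of one-dimensional stochastic flows is immediate; what your route buys is independence from the FBSDE representation (only the PDE \eqref{PDE} and the two a priori bounds are used) and it is essentially the same comparison mechanism as in Lemma \ref{comparePDEs}, of which your argument is the translation-invariant special case. The one delicate point, the validity of the maximum principle on the unbounded strip with a sign-changing zeroth-order coefficient, you identify and resolve correctly via the exponential substitution or, equivalently, the probabilistic representation. No gap.
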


\begin{proof}
As $\theta^{\sigma_{0}}$ is the decoupling field of 
\eqref{fbsde2}, we have:
\begin{equation*}
{\theta}^{\sigma_{0}}(t,x) = {\mathbb E} \bigl[ g(\mu_T^{t,x,\sigma_{0}}) \bigr],
\end{equation*}
where 
\begin{equation}
\label{eq:t,x,SDE}
d \mu_{s}^{t,x,\sigma_{0}} = - w_s^{-2} \theta^{\sigma_{0}} \bigl(s, \mu_{s}^{t,x,\sigma_{0}} \bigr) ds + 
\sigma_{0} w_s^{-1}d{B}_{s}, \quad s \in [t,T],
\end{equation}
Using standard results for one-dimensional SDEs driven by Lipschitz coefficients, we know that $x \leq y$ implies $\mu_{T}^{t,x,\sigma_{0}} \leq \mu_{T}^{t,y,\sigma_{0}}$ with probability 1. Since  $g$ is non-increasing, we complete the proof. 
\end{proof}

\subsection{Zero-noise limit of the decoupling field}

It is a well-known fact that, as $\sigma_{0}$ tends to $0$, $\theta^{\sigma_{0}}$ converges 
(in a sense that is made clear below) to the so-called \textit{entropy solution}
of the inviscid version of \eqref{PDE}. Again, we refer to \cite{HormanderBook,Lax}. 
The limit is given by the field $\theta$, whose definition is as follows. 
For all $ (t,x) \in ([0,T]\times \mathbb{R})$, we let:

\begin{equation}
\label{teta}
    \theta(t,x)= \begin{cases}
                
                 -\textrm{\rm sign}(x)   \hspace{11mm} &\text{if}  \hspace{5mm} t \leq \delta, \hspace{2mm} x \in \mathbb{R},
                  \\ 
                 
                -\textrm{\rm sign}(x) \hspace{11mm} &\text{if} \hspace{5mm} t \geq \delta,  \hspace{2mm}   
                |x| \geq r_{\delta} - r_t,
                 \\
                  
                -\frac{x}{r_{\delta} - r_t} &\text{if}  \hspace{5mm} t > \delta,  \hspace{2mm}   |x| < r_{\delta} - r_t,
   \end{cases}
\end{equation}
where we recall the definition of $r_{\delta}$ in \eqref{eq:g}.



Most of our analysis for the zero-noise limit of the 
LQ-MFG with common noise as $\sigma_{0}$ tends to $0$ is based upon sharp estimates of the difference between the fields $\theta^{\sigma_0}$ and $ \theta$. 
In this regard, we have the following bound on the difference 
$\theta^{\sigma_0}(t,x) - \theta(t,x)$,  for $\sigma_0 \in (0,1)$ and for some $(t,x)  \in [0,T)\times \mathbb{R}$.

\begin{prop} 
\label{propPSI}
Let $ \Psi (t, x, \sigma_0):=\theta^{\sigma_0}(t,x) - \theta(t,x)$, for  $(t,x,\sigma_0) \in [0,T)\times\mathbb{R}\times(0,1).$ 
Then, 
for any non-negative non-decreasing curve $\psi \in {\mathcal C}([0,T];\RR)$, 
which is strictly above the curve $[0,T] \ni t \mapsto (r_{\delta} - r_{t})_{+}$ on a left-open interval containing $[\delta,T]$, and for any function $L$ from $(0,+\infty)$ into itself such that 
$\lim_{\sigma_{0} \rightarrow 0} L(\sigma_{0})=+\infty$,
\begin{equation*}
\lim_{\sigma_{0} \rightarrow 0}
\sup_{(t,x) \in [0,T], \vert x \vert \geq \sigma_{0}^2 L(\sigma_{0}) + \psi_{t}}
\vert \Psi(t,x,\sigma_{0}) \vert =0.
\end{equation*}
\end{prop}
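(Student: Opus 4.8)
The plan is to use the Cole–Hopf representation \eqref{teta0} and carry out a Laplace-type (saddle-point) asymptotic analysis of the ratio of integrals as $\sigma_0 \to 0$, exploiting the fact that $g$ is precisely the derivative (up to sign) of a piecewise-quadratic potential. Write $G(y) := \int_0^y g(v)\,dv$; because of the explicit form \eqref{eq:g}, $G$ is $C^1$, convex-looking near $0$ (it is $-y^2/(2r_\delta)$ for $|y|\le r_\delta$) and then continues linearly as $-|y| + r_\delta/2$ for $|y| > r_\delta$, so $-G$ is a coercive, piecewise-smooth function with a unique minimum at $y=0$. After the change of variables reducing \eqref{PDE} to the standard viscous Burgers equation mentioned right after \eqref{teta0}, $\theta^{\sigma_0}(t,x)$ is the posterior mean of $(x-Y)/r_t$ under the Gibbs measure on $y$ with density proportional to $\exp\!\big(\sigma_0^{-2}(-G(y) - (x-y)^2/(2r_t))\big)$. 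The exponent is (up to constants) $-\sigma_0^{-2}\Phi_{t,x}(y)$ with $\Phi_{t,x}(y) = G(y) + (x-y)^2/(2r_t)$ — wait, sign: it is $-G(y)-(x-y)^2/(2r_t)$, so we are concentrating on the \emph{maximizer} of $H_{t,x}(y):=-G(y)-(x-y)^2/(2r_t)$, equivalently the minimizer of $G(y)+(x-y)^2/(2r_t)$.

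First I would analyze the critical-point structure of $y\mapsto G(y)+(x-y)^2/(2r_t)$. Its derivative is $g(y) + (y-x)/r_t$. In the inner region $|y|\le r_\delta$ this is $y(1/r_t - 1/r_\delta) + (\text{lower order in }x)$... more precisely $-y/r_\delta + (y-x)/r_t = 0$, i.e. $y(1/r_t - 1/r_\delta) = x/r_t$; since $r_\delta - r_t = \int_\delta^t w_s^{-2}\,ds$ has a sign depending on whether $t\lessgtr\delta$, this is exactly where the three regimes of $\theta$ in \eqref{teta} come from. In the outer region $|y|>r_\delta$ the equation is $\pm 1 + (y-x)/r_t = 0$, giving $y = x \mp r_t$, which recovers $\theta = -\mathrm{sign}(x)$. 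The key quantitative point is that, on the region $|x| \ge \sigma_0^2 L(\sigma_0) + \psi_t$, the maximizer $y^*$ of $H_{t,x}$ is \emph{unique}, \emph{non-degenerate} (the second derivative of $\Phi$ at $y^*$ is bounded away from $0$, uniformly — using that $\psi$ stays strictly above $(r_\delta-r_t)_+$ precisely to avoid the degenerate case $1/r_t = 1/r_\delta$ occurring at $t=\delta$ with $x$ near $0$), and the associated $\theta$-value $(x-y^*)/r_t$ equals $\theta(t,x)$ by the critical-point computation above. Then a uniform Laplace/Watson-lemma estimate gives $\Psi(t,x,\sigma_0) = O(\sigma_0^2)$ with a constant uniform over the specified region — plus an exponentially small contribution from the secondary critical point (when one exists, e.g. for $x$ small but above $\psi_t$, both an inner and an outer critical point compete), whose potential gap is $\gtrsim$ a positive constant times $\mathrm{dist}$, controlled by the margin $\sigma_0^2 L(\sigma_0)$ which forces $x$ to be at least $\sigma_0^2 L(\sigma_0)$ away from the transition and makes $e^{-\sigma_0^{-2}\cdot(\text{gap})}$ beat any algebraic factor. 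Rather than redo Laplace's method by hand, I would instead invoke the known uniform convergence $\theta^{\sigma_0}\to\theta$ away from the shock for the viscous Burgers equation (via the reduction noted after \eqref{teta0}, using the references \cite{HormanderBook,Lax}), and use the a priori bounds Lemmas \ref{lem:bounded}, \ref{lem:gradient}, \ref{lem:non:decreasing} (boundedness, monotonicity, and the $C/\sigma_0^2$ gradient bound) to upgrade pointwise/local-uniform convergence to the stated uniformity: monotonicity in $x$ plus the explicit monotone structure of $\theta$ lets one reduce the supremum over $|x|\ge\sigma_0^2L(\sigma_0)+\psi_t$ to its value at the boundary curve $|x| = \sigma_0^2 L(\sigma_0) + \psi_t$, and there one compares against $\theta$ evaluated at a nearby point using the Lipschitz-in-$x$ bound of $\theta$ (constant $1/(r_\delta-r_t)$ on the relevant set, which is harmless away from $t=\delta$, and bounded for $t\le\delta$).

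The structure of the argument would therefore be: (i) reduce to standard viscous Burgers via the stated time change, so that $\theta^{\sigma_0}$ is the Cole–Hopf smoothing of the entropy solution with data $g$; (ii) identify $\theta$ in \eqref{teta} as that entropy solution and locate its unique shock at $\{x=0\}$ (for $t\ge\delta$ it is a rarefaction emanating from the kink of $g$; the genuine discontinuity sits at $x=0$ for $t\le\delta$), noting the role of $\delta$ as the time the characteristics from $\pm r_\delta$ focus; (iii) prove, by the saddle-point analysis of \eqref{teta0} sketched above, the two-sided bound: $|\Psi(t,x,\sigma_0)| \le C\sigma_0^2/\mathrm{dist}(x,\text{shock})^2 + C\,e^{-c\,\mathrm{dist}(x,\text{shock})^2/\sigma_0^2}$ on $\{|x|\ge\psi_t\}$ (say), where $\mathrm{dist}(x,\text{shock}) \ge $ const $\cdot(|x|-\psi_t)$ using the strict-separation hypothesis on $\psi$; (iv) on the set $|x|\ge\sigma_0^2 L(\sigma_0)+\psi_t$ we have $|x|-\psi_t \ge \sigma_0^2 L(\sigma_0)$, so the algebraic term is $\le C/L(\sigma_0)^2\cdot(1/\sigma_0^2)$... — that does not obviously go to $0$, so in fact one wants the sharper local estimate $|\Psi| \le C\sigma_0^2/d^2$ replaced near the shock by the genuinely uniform bound coming from the gradient Lemma \ref{lem:gradient} combined with $|\theta^{\sigma_0}(t,x)-\theta^{\sigma_0}(t,x')|\le (C/\sigma_0^2)|x-x'|$ and Lipschitz continuity of $\theta$; choosing $x'$ on the shock and using $\theta^{\sigma_0}(t,0)\to\theta(t,0^{\pm})$... this is getting delicate. \textbf{The main obstacle} is exactly this: getting a bound that is uniform \emph{up to} the $\sigma_0^2 L(\sigma_0)$-boundary and still tends to $0$, since the crude Laplace estimate degrades like $\sigma_0^2/d^2$ and the crude gradient estimate degrades like $d/\sigma_0^2$, and these cross over at $d\sim\sigma_0^2$ — far inside the allowed region. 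The resolution must be a genuinely refined Laplace expansion near the non-degenerate maximizer showing $|\Psi(t,x,\sigma_0)|\lesssim \sigma_0^2/d$ (one power, not two) in the transition layer together with exponential suppression of the competing well once $d\gtrsim\sigma_0^2 L(\sigma_0)$ with $L\to\infty$; equivalently, one should write $\theta^{\sigma_0}$ as a small perturbation of the traveling/rarefaction profile and estimate directly. I would carry out this refined expansion, isolating the two cases $t\le\delta$ (genuine shock; compare with the error-function profile $\mathrm{erf}(x/(\sigma_0\sqrt{2r_t}))$-type correction, whose deviation from $-\mathrm{sign}(x)$ is $O(e^{-x^2/(2\sigma_0^2 r_t)})$ once $|x|\gg\sigma_0$, and $\sigma_0^2 L(\sigma_0)\gg\sigma_0$) and $t>\delta$ (rarefaction; $\theta$ is already Lipschitz and $\theta^{\sigma_0}$ a smooth $O(\sigma_0^2)$-perturbation of it on $|x|<r_\delta-r_t$, with the layer near $|x|=r_\delta-r_t$ handled as in the $t\le\delta$ case shifted by $\psi$). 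This case split, and the matching of inner (layer) and outer (profile) estimates into a single uniform sup, is the technical heart of the proof.
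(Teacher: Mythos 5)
Your overall strategy (Cole--Hopf representation plus a Laplace/saddle-point analysis of the ratio of integrals, combined with standard away-from-the-shock convergence for viscous Burgers) is the same one the paper uses: the paper handles the range $\vert x\vert \ge \eta+\psi_t$ by an FBSDE stability/hitting-time argument and reserves the Laplace computation for the critical range $t\le \eta<\delta$, $\vert x\vert\ge\sigma_0^2L(\sigma_0)$, but that split is not where the difficulty lies. The genuine gap is that you never close the argument at exactly the point where it matters, and the quantitative bounds you propose along the way are the wrong ones. First, your claim that $\sigma_0^2L(\sigma_0)\gg\sigma_0$ is backwards: only $L(\sigma_0)\to+\infty$ is assumed (and assumption (A1) later even forces $L(\sigma_0)=o(\vert\ln\sigma_0\vert^{1/8})$), so $\sigma_0L(\sigma_0)\to0$ and the admissible region contains points at distance $\sigma_0^2L(\sigma_0)\ll\sigma_0$ from the shock. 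Consequently every estimate you write down degenerates there: the Gaussian-profile correction $e^{-x^2/(2\sigma_0^2r_t)}$ tends to $1$, not $0$, at $x=\sigma_0^2L(\sigma_0)$; the algebraic bounds $\sigma_0^2/d^2$ and $\sigma_0^2/d$ either blow up or are left unproved, and you explicitly defer the decisive step to a ``refined expansion'' whose conclusion you only conjecture.

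The missing idea is that the suppression of the competing saddle is \emph{linear}, not quadratic, in $x$. With $h(y)=-\int_0^y g(v)\,dv-(x-y)^2/(2r_t)$, the two global maximizers on either side of the barrier are $y_1^*=x-r_t$ and $y_2^*=x+r_t$, and a direct computation (the heart of the paper's appendix, see \eqref{eq:B22}) gives $h(y_2^*)-h(y_1^*)=2x$ exactly: the height gap between the two Hopf--Lax competitors grows off the shock at a rate equal to the size of the jump of $\theta$. Expanding the Cole--Hopf ratio around each of $y_1^*,y_2^*$ then yields, for $0<x<r_t-r_\delta$ and $t$ bounded away from $\delta$,
\begin{equation*}
\vert\Psi(t,x,\sigma_0)\vert\;\le\;C\,e^{-2x/\sigma_0^2}\;+\;C\,\sigma_0\;+\;C\,e^{-c/\sigma_0^2},
\end{equation*}
and it is the term $e^{-2x/\sigma_0^2}=e^{-2L(\sigma_0)}$ at $x=\sigma_0^2L(\sigma_0)$ that explains why the threshold $\sigma_0^2L(\sigma_0)$ with merely $L\to\infty$ suffices. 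Without identifying this linear energy gap your scheme cannot conclude on the layer $\sigma_0^2L(\sigma_0)\le\vert x\vert\le\sigma_0$, and, as you yourself note, the argument as written does not go through.
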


\begin{proof}
Following \cite{TadmorTang,TadmorTang2}, we know that, for any $\eta >0$, 
\begin{equation}
\label{eq:TadmorTang}
\lim_{\sigma_{0} \rightarrow 0}
\sup_{t \in [0,T], \vert x \vert \geq \eta + \psi_{t}}
\vert \Psi(t,x,\sigma_{0}) \vert =0.
\end{equation}
The proof of \eqref{eq:TadmorTang} is in fact rather straightforward in our setting and we give it for completeness: 
The first point is to observe that, when $t \in (\delta,T]$, 
the system 
\eqref{fbsde2:sigma0=0} initiated at time $t$ from any $x \in {\mathbb R}$ is well-posed
and that the value of the backward process at time $t$ then coincides with $\theta(t,x)$. This
is a well-known fact in the theory of hyperbolic equations, which also follows from the small time analysis performed in 
\cite{Delarue02} for FBSDEs with Lipschitz continuous coefficients. 
So, for $t \in [\delta+\epsilon,T]$ for some $\epsilon \in (0,T-\delta)$, we can make the difference between 
the two systems 
\eqref{fbsde2}
and
\eqref{fbsde2:sigma0=0}
with $(t,x)$ instead of $(0,0)$ as initial condition. Following \cite{Delarue02}, we can prove that 
\begin{equation*}
\lim_{\sigma_{0} \rightarrow 0}
\sup_{(t,x) \in [\delta+\epsilon,T] \times \RR}
\vert \Psi(t,x,\sigma_{0}) \vert =0.
\end{equation*}
Now, we can choose $\epsilon$ small enough such that $r_{\delta} - r_{\delta+\epsilon} < \eta/2$. Then, for $t < \delta+\epsilon$ and $x \geq \eta + \psi_{t}$, 
\begin{equation*}
\begin{split}
\bigl\vert \Psi(t,x,\sigma_{0}) \bigr\vert = \bigl\vert \theta^{\sigma_{0}}(t,x) + 1 \bigr\vert &=
\Bigl\vert {\mathbb E}
\bigl[ \theta^{\sigma_{0}}\bigl(\delta+\epsilon,\mu_{\delta+\epsilon}^{t,x,\sigma_{0}}
\bigr) + 1 \bigr]
\Bigr\vert 
\\
&\leq 
\sup_{(s,y) \in [\delta+\epsilon,T] \times \RR}
\vert \Psi(s,y,\sigma_{0}) \vert +  4 {\mathbb P} \Bigl( \mu_{\delta+ \epsilon}^{t,x,\sigma_{0}} \leq \frac{\eta}{2} \Bigr),
\end{split}
\end{equation*}
where we used the same notation as in \eqref{eq:t,x,SDE}.
Using the fact that $\theta^{\sigma_{0}}$ is non-positive in $[0,T] \times \RR_{+}$, we have
\begin{equation*}
{\mathbb P} \Bigl( \mu_{\delta+ \epsilon}^{t,x,\sigma_{0}} \leq \frac{\eta}{2} \Bigr)
\leq 
{\mathbb P} \Bigl( \inf_{s \in [t,\delta+\epsilon]} \mu_{s}^{t,x,\sigma_{0}} \leq \frac{\eta}{2} \Bigr)
\leq 
{\mathbb P} \biggl(\sigma_{0}  \sup_{s \in [t,\delta+\epsilon]} \biggl\vert \int_{t}^s w_{r}^{-1} dB_{r}
\biggr\vert  \geq
 \frac{\eta}{2} \biggr),
 \end{equation*}
which suffices to get 
\eqref{eq:TadmorTang}.

As a consequence, it suffices to prove that for any $\eta \in (0,\delta)$, 
\begin{equation}
\label{eq:propPSI2}
\lim_{\sigma_{0} \rightarrow 0}
\sup_{t \in [0,\eta], \vert x \vert \geq \sigma_{0}^2 L(\sigma_{0}) }
\vert \Psi(t,x,\sigma_{0}) \vert =0,
\end{equation}
which is done in 
appendix, see 
Section
\ref{se:appendix}.
\end{proof} 
%

\begin{remark}
Obviously, the proof of Proposition \ref{propPSI}
provides a stronger result than what the statement claims, but the statement 
will suffice for our purpose. In fact, we feel better to state in a minimal way the conditions that 
we need to establish Theorem \ref{thm:4}. Moreover, we stress the fact that the choice of 
the terminal condition here plays a crucial role in the proof of 
\eqref{eq:propPSI2}, see again the appendix. In this regard, it is worth mentioning that 
there are numerous references on the convergence of viscous solutions to entropy solutions of scalar conservation laws, see for instance 
\cite{Kuznetsov,TadmorTang,TadmorTang2,TangTeng}: In comparison, 
\eqref{eq:propPSI2} is a fine estimate and a careful inspection would be needed to determine to the precise class of functions $g$ for which our methodology could be applied. 
\end{remark}

\subsection{$L^1$ stability}

In the analysis, we shall make use of the following lemma, which is a key property of scalar conservation laws, 
see for instance \cite{HormanderBook,Lax}. 

\begin{lem}
\label{comparePDEs}
Consider a Lipschitz continuous bounded function $ \tilde{g} \in \mathcal{C}(\mathbb{R}; \mathbb{R})$ such that $\tilde{g} \geq g $, and, for $\sigma_{0} \in (0,1)$, call $\tilde{\theta}^{\sigma_0}(t,x)$ the classical solution to 
\begin{align}
\label{PDE2}
\begin{cases}
\forall t \in [0,T) \times \mathbb{R},
\\
\partial_t \tilde{\theta}^{\sigma_0}(t,x) - w^{-2}_t \tilde{\theta}^{\sigma_0}(t,x) \partial_x \tilde{\theta}^{\sigma_0}(t,x) + \frac{1}{2} \sigma_0^2 w^{-2}_t \partial^{2}_{xx}\tilde{\theta}^{\sigma_0}(t,x)  = 0,
\\
\tilde{\theta}^{\sigma_0}(T, x) = \tilde{g}(x), \hspace{2mm} \forall x \in \mathbb{R}. 
\end{cases}
\end{align}
Then, the difference $\tilde{\theta}^{\sigma_0}-  \theta^{\sigma_0}$ is preserved, that is
\begin{equation*}
\bigl( \tilde{\theta}^{\sigma_0}-  \theta^{\sigma_0} \bigr)(t,x) \geq 0, \quad \forall(t,x) \in [0,T] \times \RR.  
\end{equation*}
Moreover,
if $\tilde{g}$ and $g$ coincide outside a compact subset of $\RR$, then 
 the  space integral of  $\vert \tilde{\theta}^{\sigma_0}-  \theta^{\sigma_0}\vert$ is also preserved, i.e
$$\int_{-\infty}^{+\infty} \bigl\vert \bigl(\tilde{\theta}^{\sigma_0} - \theta^{\sigma_0}\bigr)(t,x) \bigr\vert dx 
\leq  \int_{-\infty}^{+\infty} \bigl\vert (\tilde{g} - g)(x) \bigr\vert dx, \quad \textrm{\rm for all} \quad \forall t \in [0,T].$$
\end{lem}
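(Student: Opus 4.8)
The plan is to pass to forward time and to reduce both assertions to classical facts about viscous scalar conservation laws, applied to the \emph{difference} $v := \tilde\theta^{\sigma_0} - \theta^{\sigma_0}$. Since $w_t^{-2}\,\theta\,\partial_x\theta = \tfrac12 w_t^{-2}\partial_x(\theta^2)$, equation \eqref{PDE} is, after the change of variable $\tau = T-t$, a forward, non-degenerate parabolic equation in conservative form; the same holds for \eqref{PDE2}. Subtracting the two equations and using the identity $\tilde\theta^2 - \theta^2 = (\tilde\theta + \theta)\,v$, the function $v$ solves the \emph{linear} parabolic equation
\begin{equation*}
\partial_\tau v = -\tfrac12 w^{-2}\partial_x\bigl((\tilde\theta^{\sigma_0}+\theta^{\sigma_0})\,v\bigr) + \tfrac12\sigma_0^2 w^{-2}\partial^2_{xx}v, \qquad v|_{\tau=0} = \tilde g - g \geq 0,
\end{equation*}
where now $w^{-2}$ stands for the time-reversed coefficient $w_{T-\tau}^{-2}$, and whose first- and zeroth-order coefficients are bounded for each fixed $\sigma_0>0$, by Lemmas \ref{lem:bounded} and \ref{lem:gradient}, and which is uniformly parabolic.

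For the first assertion, the coefficients being bounded and $v$ being itself bounded (again by Lemma \ref{lem:bounded}), the weak maximum principle applies to this linear equation: since the initial datum $\tilde g - g$ is non-negative, so is $v$ on all of $[0,T]\times\mathbb{R}$. Equivalently, one writes a Feynman--Kac representation of $v$ with a strictly positive multiplicative weight and concludes the same way. Reading this back in the original time variable gives $\tilde\theta^{\sigma_0} - \theta^{\sigma_0}\geq 0$.

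For the second assertion, suppose in addition that $\tilde g - g$ is supported in a compact set, so that it lies in $L^1(\mathbb{R})$. Since $v\geq 0$ by the first part, $|v| = v$, and integrating the displayed equation in $x$ over $\mathbb{R}$ the whole right-hand side is a total $x$-derivative, so that $\frac{d}{d\tau}\int_{\mathbb{R}} v(\tau,x)\,dx = 0$; hence $\int_{\mathbb{R}}|\tilde\theta^{\sigma_0}(t,x) - \theta^{\sigma_0}(t,x)|\,dx = \int_{\mathbb{R}}|\tilde g - g|\,dx$ for every $t\in[0,T]$, which is even stronger than the claimed inequality. Alternatively, and without using the sign of $v$, one runs Kruzhkov's classical argument: multiply the equation for $v$ by a smooth approximation of $\mathrm{sign}(v)$, integrate, and let the regularisation parameter vanish; the second-order term contributes a non-positive quantity and the convective term an exact boundary term, which gives $\frac{d}{d\tau}\int_{\mathbb{R}}|v|\,dx\leq 0$. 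See \cite{HormanderBook,Lax} for the classical versions of these statements.

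The step that genuinely requires care is the justification of the integrations by parts, that is the vanishing of the boundary terms as $x\to\pm\infty$: $\theta^{\sigma_0}$ itself does \emph{not} decay — it tends to $\mp 1$ — so the argument must be run on $v$, and one needs $v$, $\partial_x v$ and $(\tilde\theta^{\sigma_0}+\theta^{\sigma_0})v$ to decay fast enough in $x$, uniformly on $[0,T]$. I would obtain this by comparing $v$ with solutions of the above linear equation issued from compactly supported data and invoking the Gaussian-type bounds on the fundamental solution of the (uniformly parabolic, fixed-$\sigma_0$) operator — or, probabilistically, directly from the Feynman--Kac formula together with a Gaussian tail bound for the underlying diffusion and the compact support of $\tilde g - g$. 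Once this uniform decay is in place, all the manipulations above are licit and the lemma follows.
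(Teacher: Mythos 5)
Your proof is correct and follows essentially the same route as the paper: the paper likewise linearizes the equation satisfied by the difference (writing the nonlinearity as $\tilde\theta^{\sigma_0}\partial_x v + \partial_x\theta^{\sigma_0}\, v$ rather than in your conservative form $\partial_x\bigl((\tilde\theta^{\sigma_0}+\theta^{\sigma_0})v\bigr)$) and invokes the maximum principle for the sign, then cites Theorem 3.3.1 of \cite{HormanderBook} for the $L^1$ bound, which is exactly the Kruzhkov-type contraction you sketch as an alternative. Your direct conservation argument, using $v\geq 0$ to integrate the divergence-form equation in $x$, is a legitimate self-contained substitute (modulo the decay justification you correctly flag) and in fact recovers the equality that the paper only remarks on after the statement.
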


In fact, the last inequality is an equality (because $\tilde{\theta}^{\sigma_0}-  \theta^{\sigma_0}$
has a constant sign), but we won't use this fact in the sequel.

\begin{proof}
\textit{First step.}
The fact that \eqref{PDE2} is well-posed is a standard fact in the theory of nonlinear parabolic equations, see
for instance \cite{Ladyzhenskaya}, see also \cite{Delarue02,DelarueGuatteri,MaProtterYong} for the probabilistic interpretation.
Importantly, since $\tilde{g}$ is Lipschitz continuous, $\tilde{\theta}^{\sigma_{0}}$ is also Lipschtiz continuous in 
space, uniformly in time. 

We then observe that the difference $\tilde{\theta}^{\sigma_{0}} - \theta^{\sigma_{0}}$ is the solution of 
\begin{equation*}
\partial_t \bigl( \tilde{\theta}^{\sigma_0} - \theta^{\sigma_{0}} \bigr) - w^{-2}_t \tilde{\theta}^{\sigma_0} \partial_x \bigl( \tilde{\theta}^{\sigma_0} - \theta^{\sigma_{0}} \bigr) + \frac{1}{2} \sigma_0^2 w^{-2}_t \partial^{2}_{xx}
\bigl( \tilde{\theta}^{\sigma_0} -{\theta}^{\sigma_0} 
\bigr) -  w_{t}^{-2}  \partial_{x} \theta^{\sigma_{0}} \bigl(\tilde \theta^{\sigma_{0}} - \theta^{\sigma_{0}} \bigr)= 0,
\end{equation*}
which can be regarded as a linear equation in $\tilde{\theta}^{\sigma_{0}} - \theta^{\sigma_{0}}$. 
Since 
$\tilde{g} \geq g$, we deduce from the maximum principle that $\tilde{\theta}^{\sigma_{0}} \geq \theta^{\sigma_{0}}$. 
\vspace{5pt}

\textit{Second step.}
The second part of the proof is a direct consequence of Theorem 3.3.1
in 
\cite{HormanderBook}.
\end{proof}

\section{Zero noise limit selection}
\label{se:zero:noise}

The zero noise limit problem described in Section \ref{se:2} requires to find the limit of the unique equilibrium (when $ \sigma_0 >0$) as $\sigma_0 \rightarrow 0 $ (i.e as the common noise $B$ influence on the players vanishes). Through Proposition \ref{prop1}, it is equivalent to study the limit of the unique solution of FBSDE (\ref{fbsde2}) as $\sigma_0 \rightarrow 0 $. 
In this regard, the previous section allows us to reduce the problem to the analysis of the zero-noise limit of the forward SDE 
\begin{equation}
\label{eq:SDE:FSS}
dX_{t} = - w_{t}^{-2} \theta^{\sigma_{0}}(t,X_{t}) dt + \sigma_{0} w_{t}^{-1} dB_{t},
\end{equation}
which we derived from the four-step-scheme.  This prompts us to use the asymptotic form of the decoupling field as $\sigma_0$ tends to $0$, as 
discussed in Proposition
\ref{propPSI}, in order to study the asymptotic behaviour of \eqref{eq:SDE:FSS}. The difficulty to do so comes from the fact that the limit of the decoupling field is discontinuous at $x=0$ and
for $t$ close to $0$. In particular, similar to the famous Peano example for differential equations (the situation is even worse since 
the limiting drift is in fact non-continuous whilst Peano example is for an ODE with a continuous drift), the zero-noise limit of the forward SDE is not uniquely solvable. 
Taking benefit of the fact that this SDE is set in dimension 1, we manage to adapt the techniques from \cite{DelarueFlandoli} to determine the solutions of the asymptotic forward SDE that are selected in the limit. Precisely, we show that this approach selects the extremal equilibria (i.e $ A \in \{1, -1 \}$) in 
Proposition \ref{prop:sol:sigma0=0}.
Using the same 
 terminology as in \cite{DelarueFlandoli},
  we also exhibit a transition space-time point,  the precise definition of which is given in the next subsection.

\subsection{Main result}

Thanks to the four-step-scheme, the solution to FBSDE (\ref{fbsde2}) can now be identified 
with the solution of \eqref{eq:SDE:FSS} with $0$ as initial condition at time $0$. More 
precisely, thanks to Proposition 
\ref{propPSI}, 
we can write
$(\mu^{\sigma_0}_t:=\mu^{0,\sigma_0}_t)_{t \in [0,T]}$
as the solution of the SDE:  
\begin{align}
\label{SDE}
\begin{cases}
\mu^{\sigma_0}_t &= - \int_0^t  w^{-2}_s  \theta^{\sigma_0}(s,\mu^{\sigma_0}_s)  ds + \int_0^t w^{-1}_s \sigma_0 dB_s \hspace{4mm} \\ 
&=  - \int_0^t w^{-2}_s \big( \theta(s,\mu^{\sigma_0}_s)   + \Psi (s, \mu^{\sigma_0}_s, \sigma_0)  \big) ds + \int_0^t w^{-1}_s \sigma_0 dB_s, \hspace{4mm} \forall t \in [0,T].
\end{cases}
\end{align}



As explained before, our objective is to find a transition point for the process $(\mu^{\sigma_0}_t)_{t \in [0,T]}$. A transition point in this setting is a pair of two space-time points $(\pm \epsilon_0,t_0)$, which get closer and closer to $(0,0)$ as $\sigma_0$ tends to $0$ with the following 
two properties:
\begin{enumerate}
\item The probability that 
the process $( \mu^{\sigma_0}_t )_{t \in [0,T]}$ reaches $ \pm \epsilon_0 $ in a time of the same scale as $t_{0}$  tends to $1$ as $\sigma_0 \rightarrow 0$. 
\item Given the fact that the process
$(\mu^{\sigma_0}_t)_{t \in [0,T]}$ hits $\pm \epsilon_{0}$ in a time of order $t_{0}$, 
the probability that it escapes away from $0$
after the hitting time  
tends to $1$ as $\sigma_0 \rightarrow 0$.

For the latter item, we will compare the trajectories of $(\mu^{\sigma_0}_t)_{t \in [0,T]}$ with the curves $(\pm \gamma \int_{0}^t w_{s}^{-2} ds)_{0 \leq t \leq T}$, for any arbitrary $\gamma \in (0,1)$. We will show that, with probability asymptotically equal to 1,  
$(\mu^{\sigma_0}_t)_{t \in [0,T]}$ escapes from $0$ at a faster rate than any of these curves.  
\end{enumerate}

Of course, as $\sigma_0 \rightarrow 0$, the effect of the common noise vanishes and outside a null event, the trajectories of $(\mu^{\sigma_0}_t)_{t \in (0,T]}$ will concentrate on the equilibria $A \in \{-1, 1\}$
in Proposition \ref{prop:sol:sigma0=0}. For symmetry reasons, the two equilibria will be charged with the same probability. 
To make it clear, here is the main result of this section:
 
\begin{thm}
\label{mainzeronoise} Consider $\big( k_t := \int^t_0 w_s^{-2}ds \big)_{t \in [0,T]} \in \mathcal{C}([0,T];\mathbb{R})$.
Then, the sequence of laws  $({\mathbb P} \circ 
(\mu^{\sigma_0}_{t})_{0 \le t \le T}^{-1})_{\sigma_0 \in (0, 1)}$ converges, as $\sigma_0 \rightarrow 0$, to 
\begin{equation*}
\frac12 \delta_{(k_t)_{t \in [0,T]}}
+\frac12 \delta_{(-k_t)_{t \in [0,T]}}.
\end{equation*}


\end{thm}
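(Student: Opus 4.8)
\textbf{Proof strategy for Theorem \ref{mainzeronoise}.}
The plan is to exploit the one-dimensional structure of \eqref{SDE} and the near-explicit form of the limiting drift $-w_t^{-2}\theta(t,x)$, following the scheme of \cite{DelarueFlandoli}. By the symmetry $x \mapsto -x$ of the coefficients (note $g$ is odd, $\theta$ is odd in $x$, and $B \mapsto -B$ has the same law), the law of $(\mu^{\sigma_0}_t)_{t\in[0,T]}$ is invariant under $\mu \mapsto -\mu$; hence it suffices to prove that, with probability tending to $1/2$ as $\sigma_0 \to 0$, the trajectory converges uniformly on $[0,T]$ to $(k_t)_{t\in[0,T]}$, and that the event that it stays identically at $0$ or oscillates has vanishing probability. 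Concretely, for fixed small $\gamma \in (0,1)$ I would aim to show that
\begin{equation*}
\lim_{\sigma_0 \to 0} {\mathbb P}\Bigl( \mu^{\sigma_0}_t \geq \gamma k_t \ \text{for all } t \in [\tau_{\sigma_0}, T] \Bigr) + {\mathbb P}\Bigl( \mu^{\sigma_0}_t \leq -\gamma k_t \ \text{for all } t \in [\tau_{\sigma_0}, T] \Bigr) = 1,
\end{equation*}
where $\tau_{\sigma_0}$ is a suitable (small, $\sigma_0$-dependent) hitting time; then, since on $|x| \geq r_\delta - r_t$ (in particular on $|x| \geq \gamma k_t$ once $t$ is bounded away from $0$ and $\gamma$ is close to $1$... more precisely once $\gamma k_t > r_\delta - r_t$) the drift is exactly $\pm w_t^{-2}$, the dynamics becomes, up to the vanishing noise and the vanishing error $\Psi$ controlled by Proposition \ref{propPSI}, the deterministic ODE $\dot\mu_t = \mp w_t^{-2}$ that is solved by $\mp k_t$ (shifted), forcing convergence to $\pm k_t$. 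The first step would therefore be a deterministic comparison/stability argument on $[t_1,T]$ for any $t_1>0$: once $\mu^{\sigma_0}$ is trapped in the region where $\theta \equiv \mp\text{sign}$, a Gronwall estimate using Lemma \ref{lem:bounded} and the uniform smallness of $\Psi$ from Proposition \ref{propPSI} (applied with, say, $\psi_t = (r_\delta - r_t)_+ + \varepsilon$ and $L(\sigma_0) = \sigma_0^{-1}$) gives $\sup_{t \in [t_1,T]} |\mu^{\sigma_0}_t \pm k_t| \to 0$ in probability.

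The heart of the matter is the behaviour near $(0,0)$, i.e. constructing the transition point $(\pm\epsilon_0, t_0)$ and establishing the two properties listed before the statement. For item (1), I would note that on the region $\{t \leq \del,\ |x| < \text{something}\}$ the drift $-w_t^{-2}\theta(t,x) = w_t^{-2}\text{sign}(x)$ is \emph{repelling} from $0$, but only of order $1$, while the martingale part $\sigma_0 \int_0^t w_s^{-1}dB_s$ has fluctuations of order $\sigma_0\sqrt{t}$; comparing with the two explicit barriers $\pm\sigma_0^2 L(\sigma_0)$ and using the law of the iterated logarithm / reflection-type estimates for the time-changed Brownian motion, one shows that $(\mu^{\sigma_0}_t)$ exits the strip $|x| \leq \sigma_0^2 L(\sigma_0) + \psi_t$ before any fixed time, with probability $\to 1$; crucially, the small correction $\Psi$ is negligible on that strip by Lemma \ref{lem:gradient} (the drift error is $O(\sigma_0^{-2}) \times |x|$ which stays $o(1)$ there). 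Once outside the strip, Proposition \ref{propPSI} applies and the drift is $\approx w_t^{-2}\text{sign}(\mu^{\sigma_0}_t)$, which for item (2) is genuinely repelling of unit strength: I would run a one-sided comparison, say conditionally on the first exit being into $\{x>0\}$ at a point $\epsilon_0$, between $\mu^{\sigma_0}$ and the process $\gamma k_t + \text{(a small Brownian perturbation)}$, and show via a Gronwall/supermartingale argument (exactly as in \cite[Section 3]{DelarueFlandoli}) that $\mu^{\sigma_0}_t \geq \gamma k_t$ for all subsequent $t$ with probability $\to 1$, uniformly in $\gamma < 1$; by symmetry the exit is into $\{x>0\}$ or $\{x<0\}$ each with probability $\to 1/2$.

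Finally I would assemble the pieces: tightness of $({\mathbb P}\circ(\mu^{\sigma_0})^{-1})$ in $\mathcal{C}([0,T];\mathbb{R})$ follows from the uniform bound $|\theta^{\sigma_0}| \leq 1$ (Lemma \ref{lem:bounded}) and the Kolmogorov criterion applied to \eqref{SDE}; any weak limit is then supported, by the above, on $\{(k_t)_t, (-k_t)_t\}$ with the two atoms of equal mass by symmetry; uniqueness of the limit then upgrades the subsequential convergence to full convergence, giving $\tfrac12\delta_{(k_t)} + \tfrac12\delta_{(-k_t)}$. The main obstacle, as in \cite{DelarueFlandoli}, is the second property of the transition point: one must choose the scale $t_0$ (equivalently $L(\sigma_0)$, which should grow like a power of $\log(1/\sigma_0)$ or so) delicately enough that (a) the process has genuinely escaped the strip where $\Psi$ is uncontrolled, yet (b) it has not wandered so far in the wrong direction that the sign of $\epsilon_0$ is not yet decided — and then to show that after time $t_0$ the probability of ever returning to $0$ is negligible despite the drift being only of unit order. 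This is precisely where the sharp estimate of Proposition \ref{propPSI} (rather than a crude entropy-convergence bound) is needed, and where the specific choice $\xi = 0$, sitting exactly at the discontinuity of $\theta$, is what produces two selected equilibria instead of one.
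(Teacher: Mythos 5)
Your overall architecture is the same as the paper's: a two-phase transition-point analysis in the spirit of \cite{DelarueFlandoli} (reach $\pm\epsilon_0$ on a short time scale, then escape along the curves $(1-\gamma)k_t$ via an exponential-martingale/Doob estimate), followed by tightness, the portmanteau theorem applied to the closed sets $\{\psi : \psi_s \ge k_s-k_t-\epsilon,\ s\in[t,T]\}$, and the symmetry of the law under $\mu\mapsto-\mu$ to split the mass $1/2$--$1/2$. Your escape phase and your conclusion match Proposition \ref{boundasymp} and the proof of Theorem \ref{thm:general} closely enough.

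The gap is in your justification of the first transition-point property (exiting the strip $|x|\le\sigma_0^2L(\sigma_0)+\psi_t$ quickly). Two claims there are false. First, inside the strip the relevant drift is $-w_t^{-2}\theta^{\sigma_0}(t,\cdot)$, not $-w_t^{-2}\theta(t,\cdot)$: since $\theta^{\sigma_0}$ is odd and continuous, $\theta^{\sigma_0}(t,0)=0$, so the drift is \emph{not} repelling of unit order near $0$, and $\Psi=\theta^{\sigma_0}-\theta$ is of order $1$ there (it equals $1$ at $x=0^+$ for $t\le\delta$), hence certainly not negligible on the strip. Second, Lemma \ref{lem:gradient} gives $|\theta^{\sigma_0}(t,x)|\le C\sigma_0^{-2}|x|\le C L(\sigma_0)$ on the strip, a bound that \emph{diverges}; it yields nothing. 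The paper's mechanism deliberately uses no information about the drift in the strip beyond $|\theta^{\sigma_0}|\le 1$ (Lemma \ref{lem:bounded}): Lemma \ref{le:5} removes the drift by Girsanov at the cost of a density factor and produces a per-block exit probability of order $\exp(-cL^4(\sigma_0))$ on each of roughly $\sigma_0^{-1}$ time blocks of length $\sigma_0^2$; iterating gives a failure probability $\exp(-c'\sigma_0^{-1}e^{-cL^4(\sigma_0)})$, which tends to $0$ precisely because $L(\sigma_0)=o(|\ln\sigma_0|^{1/8})$ is imposed in (A1). This is also why your parenthetical choice $L(\sigma_0)=\sigma_0^{-1}$ cannot work: it makes $e^{-cL^4(\sigma_0)}$ decay faster than any power of $\sigma_0$ (and makes $\epsilon_0=\sigma_0^2L^2(\sigma_0)$ fail to tend to $0$); your closing remark that $L$ should be logarithmic is the correct one, but it contradicts that choice. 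A comparison/LIL route could perhaps be salvaged by using oddness together with Lemma \ref{lem:non:decreasing} to argue that the drift only pushes outward, but as written the step fails, and the Girsanov iteration of Proposition \ref{transitionpoint} is what actually closes it.
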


\subsection{A more general framework}
\label{subse:5:2}
We shall prove Theorem 
\ref{mainzeronoise} as a particular case of a more general framework. 
Assume indeed that, for any $\sigma_{0} \in (0,1)$, there exists a 
continuous
mapping $\tilde{\theta}^{\sigma_{0}}$ from $[0,T] \times \RR$ into $[-1,1]$ 
with the following three features:
\begin{enumerate}
\item[(A1)] 
There exists a function $L$ from $(0,+\infty)$ into itself satisfying 
$$\lim_{\sigma_{0} \rightarrow 0} L(\sigma_{0})=+\infty
\qquad \textrm{and}
\qquad
\lim_{\sigma_{0} \rightarrow 0} L(\sigma_{0}) \vert \ln(\sigma_{0}) \vert^{-1/8}=0,$$
such that, 
for any
non-negative non-decreasing curve $\psi \in {\mathcal C}([0,T];\RR)$, 
which is strictly above the curve  $t \mapsto  (r_{\delta}-r_{t})_{+}$ on a left-open interval containing $[\delta,T]$,
\begin{equation*}
\lim_{\sigma_{0} \rightarrow 0}
\sup_{(t,x) \in [0,T], \vert x \vert \geq \sigma_{0}^2 L(\sigma_{0}) + \psi_{t}}
\vert (\tilde \theta^{\sigma_{0}} - \theta)(t,x) \vert =0.
\end{equation*}
\item[(A2)] For any $\sigma_{0}>0$, there exist three real-valued continuous 
adapted processes 
$(\tilde \mu_{t}^{\sigma_{0}})_{t \in [0,T]}$,
$(\tilde Y_{t}^{\sigma_{0}})_{t \in [0,T]}$,
and $(\triangle_{t}^{\sigma_{0}})_{t \in [0,T]}$, the process 
$(\tilde Y_{t}^{\sigma_{0}})_{t \in [0,T]}$
taking values in $[-1,1]$, 
 such that 
\begin{equation*}
\tilde \mu^{\sigma_{0}}_t =  - \int_0^t w_s^{-2}  \tilde{Y}^{\sigma_{0}}_{s} ds  + 
\sigma_{0} \int_{0}^t w_{s}^{-1} dB_{s}, \quad t \in [0,T], 
\end{equation*}
with 
\begin{equation*}
\tilde Y_{t}^{\sigma_{0}} = \tilde{\theta}^{\sigma_{0}}\bigl(t,\tilde \mu^{\sigma_{0}}_{t}\bigr) - \triangle_{t}^{\sigma_{0}}, \quad t \in [0,T], 
\end{equation*}
and
$\sup_{t \in [0,T]} 
\vert \triangle_{t}^{\sigma_{0}} \vert$ tends to $0$ in probability as $\sigma_{0}$ tends to $0$.
\item[(A3)] For any $\sigma_{0}>0$, the law of 
$\tilde \mu^{\sigma_{0}}$ on ${\mathcal C}([0,T];{\mathbb R})$
is the same as the law of 
$- \tilde \mu^{\sigma_{0}}$.
\end{enumerate}

In this framework, we prove below the following statement:
\begin{thm}
\label{thm:general}
Under assumptions \textrm{\rm (A1)}, \textrm{\rm (A2)} and \textrm{\rm (A3)}, the sequence $({\mathbb P} \circ 
(\tilde \mu^{\sigma_{0}}_{t})_{t \in [0,T]}^{-1})_{\sigma_{0} \in (0,1)}$ converges, 
as $\sigma_{0}$ tends to $0$, to 
\begin{equation*}
\frac12 \delta_{(k_t)_{t \in [0,T]}}
+\frac12 \delta_{(-k_t)_{t \in [0,T]}}.
\end{equation*}
\end{thm}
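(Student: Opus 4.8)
The strategy is to reduce Theorem \ref{thm:general} to the analysis of the one-dimensional SDE for $\tilde\mu^{\sigma_0}$, following the transition-point philosophy of \cite{DelarueFlandoli}. First, using (A1) and the explicit form \eqref{teta} of $\theta$, I would note that outside a shrinking tube around the singular set $\{x=0\}$ the drift $-w_t^{-2}\tilde\theta^{\sigma_0}(t,\tilde\mu^{\sigma_0}_t)$ is, up to the error $\triangle^{\sigma_0}$ controlled by (A2), close to $-w_t^{-2}\theta(t,\cdot)$, which for $t\le\delta$ equals $w_t^{-2}\,\mathrm{sign}(x)$ — a genuinely repulsive drift pushing the process away from $0$. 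The heart of the argument is to build a \emph{transition point} $(\pm\epsilon_0(\sigma_0),t_0(\sigma_0))\to(0,0)$ with the two properties described before the statement of Theorem \ref{mainzeronoise}. For the exit from a neighborhood of the origin, in the regime $|\tilde\mu^{\sigma_0}_t|\le\sigma_0^2L(\sigma_0)$ the drift is negligible and the process behaves like a time-changed Brownian motion of size $\sigma_0\big(\int_0^t w_s^{-2}ds\big)^{1/2}$; choosing $\epsilon_0$ of order $\sigma_0^2 L(\sigma_0)$ and $t_0$ of the corresponding scale, a standard hitting-time estimate for Brownian motion shows the process reaches $\pm\epsilon_0$ before time $t_0$ with probability tending to $1$, and the growth condition $L(\sigma_0)|\ln\sigma_0|^{-1/8}\to0$ in (A1) is exactly what makes $t_0\to0$ while the error terms remain controlled.

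For the second property (escape), I would fix $\gamma\in(0,1)$ and compare, after the hitting time of $\epsilon_0$ (work on that half-space by symmetry), the trajectory of $\tilde\mu^{\sigma_0}$ with the explicit curve $\big(\gamma\int_0^t w_s^{-2}ds\big)_{t}$. On the event where $\tilde\mu^{\sigma_0}$ stays in the region $0<x<\psi_t$ with $\psi$ chosen as in (A1) — e.g.\ a curve strictly above $(r_\delta-r_t)_+$ where $\theta$ has slope bounded by $1$ — the uniform bound $\tilde\theta^{\sigma_0}\le -\gamma'+o(1)$ for suitable $\gamma'>\gamma$ forces the drift to dominate, so a Gronwall/comparison argument for one-dimensional SDEs (as in \cite{DelarueFlandoli}) shows $\tilde\mu^{\sigma_0}_t\ge\gamma\int_0^t w_s^{-2}ds$ with probability tending to $1$; once the process is that far from $0$, (A1) guarantees the drift keeps it there, so $\tilde\mu^{\sigma_0}$ is trapped and, combined with the $L^\infty$ bound $|\tilde\theta^{\sigma_0}|\le1$, it converges uniformly to $(k_t)_{t\in[0,T]}$. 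Property (A3) then splits the mass symmetrically between the two curves $(k_t)$ and $(-k_t)$, which (together with tightness of the laws, immediate from the uniform drift and noise bounds plus Kolmogorov's criterion) yields the announced weak limit $\tfrac12\delta_{(k_t)}+\tfrac12\delta_{(-k_t)}$.

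Finally, Theorem \ref{mainzeronoise} follows as the special case $\tilde\theta^{\sigma_0}=\theta^{\sigma_0}$, $\tilde\mu^{\sigma_0}=\mu^{\sigma_0}$, $\triangle^{\sigma_0}\equiv0$: (A1) is Proposition \ref{propPSI}, (A2) holds with $\tilde Y^{\sigma_0}_t=\theta^{\sigma_0}(t,\mu^{\sigma_0}_t)$ by \eqref{SDE}, and (A3) is the symmetry $g(-x)=-g(x)$ of the terminal data, which makes $-\mu^{\sigma_0}$ a solution driven by $-B$.

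\textbf{Main obstacle.}
The delicate point is the quantitative matching of scales in the transition-point construction: one must pick $\epsilon_0$ and $t_0$ simultaneously so that (i) the diffusion is strong enough to reach $\pm\epsilon_0$ quickly, (ii) the accumulated error from $\triangle^{\sigma_0}$ and from the $\sigma_0^2L(\sigma_0)$-tube in (A1) is negligible on that time scale, and (iii) once outside, the repulsive drift genuinely beats the curve $\gamma\int_0^t w_s^{-2}ds$ uniformly in the relevant region despite the drift being only $\mathrm{sign}$-type (hence the absolute value, not a linear restoring force). Getting the logarithmic calibration — why $L(\sigma_0)|\ln\sigma_0|^{-1/8}\to0$ is the right threshold — and handling the boundary layer near $t=\delta$ where $\theta$ transitions from $-\mathrm{sign}(x)$ to the Lipschitz profile $-x/(r_\delta-r_t)$ are where the real work lies; everything else is comparison arguments for scalar SDEs and routine tightness.
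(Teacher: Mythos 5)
Your plan follows the same architecture as the paper's proof: a two-phase transition-point argument (reach $\pm\epsilon_0$ in a time of order $t_0$, then escape along curves of the type $(1-\gamma)\int_{\cdot}^{t}w_s^{-2}ds$ using (A1), (A2) and the fact that $\theta\equiv-\textrm{sign}(x)$ above the relevant barrier), followed by tightness, a portmanteau argument on closed sets of paths dominating $k_\cdot-k_t-\epsilon$, and the symmetry (A3) to split the mass evenly between $(k_t)_t$ and $(-k_t)_t$; the specialization to Theorem \ref{mainzeronoise} is also identified correctly. The second phase and the conclusion are faithful sketches of what the paper does (Proposition \ref{boundasymp} implements the comparison via the stopping time $\tau^{+}_{\gamma}$, Lemma \ref{increasingbounds} for the boundary layer near $t=\delta$, and an exponential-martingale/Doob estimate rather than Gronwall, but that is a matter of implementation).

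The genuine gap is in the first phase. You assert that for $|\tilde\mu^{\sigma_0}_t|\le\sigma_0^2L(\sigma_0)$ the drift is negligible, so that the process behaves like a time-changed Brownian motion and a standard hitting-time estimate applies. This is false at the relevant scales: the drift is $-w_t^{-2}\tilde Y_t$ with $\tilde Y$ only known to be adapted and $[-1,1]$-valued, and (A1) gives no information inside the tube $|x|\le\sigma_0^2L(\sigma_0)+\psi_t$, so over the time scale $t_0=\sigma_0$ the drift can displace the process by order $k_{\sigma_0}\sim\sigma_0$, which dwarfs $\epsilon_0=\sigma_0^2L^2(\sigma_0)$; an adversarial bounded drift could a priori pin the process near $0$. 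The paper's Lemma \ref{le:5} is exactly the missing ingredient: a Girsanov change of measure removes the arbitrary bounded drift, a Gaussian lower bound gives a per-window success probability of order $\exp(-cL^4(\sigma_0))$ for hitting level $L^2(\sigma_0)$ in rescaled variables, a Cauchy--Schwarz bound on the Radon--Nikodym density transfers this back to $\mathbb{P}$, and an iteration over roughly $\sigma_0^{-1}$ windows yields $\mathbb{P}(\tau_{\epsilon_0}(\tilde\mu^{\sigma_0})\ge\sigma_0)\le\exp\bigl(-c'\sigma_0^{-1}\exp(-cL^4(\sigma_0))\bigr)$. This is also where the calibration $L(\sigma_0)|\ln\sigma_0|^{-1/8}\to0$ is actually used (it guarantees $\sigma_0^{-1}\exp(-cL^4(\sigma_0))\to+\infty$), not, as you suggest, to make $t_0\to0$. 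Since you explicitly defer this point to your ``main obstacle'' without resolving it, property (1) of the transition point is not established in your proposal.
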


It is well checked that 
Theorem 
\ref{mainzeronoise}
follows from 
Theorem \ref{thm:general}:
(A1) is Proposition 
\ref{propPSI}, 
(A2) is Lemma 
\ref{lem:bounded}
and 
(A3) is a consequence of uniqueness (in law) 
to 
\eqref{fbsde2}, 
noticing that the triple 
$(-\mu^{0,\sigma_0}_t, -h^{0,\sigma_0}_t, Z^{0,\sigma_0}_t  )_{t\in[0,T]}$
is a solution of 
the system
\eqref{fbsde2}
driven by 
$(-B_{t})_{t \in [0,T]}$.

\subsection{Reaching the transition point}
\label{subse:transition:point}
Throughout the subsection, 
we assume that (A1), (A2) and (A3) hold true. 
We define our transition point as
\begin{equation}
\label{eq:transition:point}
(\epsilon_0, t_0) := \bigl(\sigma_0^2 L^2(\sigma_{0}), \sigma_{0} \bigr),
\end{equation} 
where $L$ is as in (A1).

We will regard $\epsilon_{0}$ and $t_{0}$ as functions of $\sigma_{0}$.
We call them infinitesimal functions of $\sigma_{0}$ in the sense that they tend to $0$ with $\sigma_{0}$. 

With the transition point, we associate the following hitting time:
For all $ \psi \in \mathcal{C}([0,T];\mathbb{R})$,
consider 
\begin{equation}
\label{eq:tau1}
\tau_{\epsilon_0}(\psi) := \inf\{ t \in [0, T] : |\psi_t| > \epsilon_0 \},
\end{equation}
with the convention that $\tau_{\epsilon_0}(\psi) := T$
when the set in the right-hand side is empty.

\begin{prop}{\textbf{\bf (Transition point).}}
\label{transitionpoint}
Consider $\tilde{t_0}$, a positive infinitesimal function of $\sigma_0$, such that $\underset{\sigma_0 \rightarrow 0}{\lim} 
\tilde{t_0}/t_0 = +\infty  $.
Then,
$$\mathbb{P} \bigl( \tau_{\epsilon_0}\bigl(\tilde \mu^{\sigma_0}\bigr) > \tilde{t_0} \bigr) \rightarrow 0 \hspace{5mm} \textrm{\rm as} \hspace{5mm} \sigma_0 \rightarrow 0. $$
\end{prop}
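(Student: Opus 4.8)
\textit{Plan.}
The idea is to realize $\tau_{\epsilon_0}(\tilde\mu^{\sigma_0})$ as (part of) the first exit time of $\tilde\mu^{\sigma_0}$ from $(-\epsilon_0,\epsilon_0)$, with $\epsilon_0=\sigma_0^2 L^2(\sigma_0)$, and to bound its expectation by a carefully chosen Lyapunov function. Write $c_1:=\inf_{[0,T]}w_t^{-2}>0$ and $c_2:=\sup_{[0,T]}w_t^{-2}<\infty$. Since $\tilde t_0$ and $\sigma_0^2 L^2(\sigma_0)$ tend to $0$, restrict to $\sigma_0$ small enough that $\tilde t_0\le\delta/4$ and $L(\sigma_0)>1$. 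Pick a curve $\psi$ admissible in \textrm{\rm (A1)} with $\psi\equiv 0$ on $[0,\delta/4]$ (e.g. $\psi_t=(r_\delta-r_t)_+ + T^{-1}(t-\delta/4)_+$); since $\theta(t,x)=-\mathrm{sign}(x)$ for $t\le\delta$, \textrm{\rm (A1)} then yields that $\rho(\sigma_0):=\sup\{\,|\tilde\theta^{\sigma_0}(t,x)+\mathrm{sign}(x)|:t\le\delta/4,\ |x|\ge\sigma_0^2 L(\sigma_0)\,\}\to 0$. Finally set $\eta_0:=1/4$, let $\tau_\triangle:=\inf\{t:|\triangle_t^{\sigma_0}|>\eta_0\}$ --- so that $\mathbb{P}(\tau_\triangle\le\tilde t_0)\to 0$ by \textrm{\rm (A2)} --- and $\tau:=\tau_{\epsilon_0}(\tilde\mu^{\sigma_0})\wedge\tau_\triangle$. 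On $[0,\tau]$ one has $|\tilde\mu_t^{\sigma_0}|\le\epsilon_0$, $|\triangle_t^{\sigma_0}|\le\eta_0$ and $t\le\delta/4$.

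Next I would build an even, non-negative $\phi\in\mathcal{C}^1([-\epsilon_0,\epsilon_0])$, of class $\mathcal{C}^2$ off $\pm\sigma_0^2 L(\sigma_0)$, non-increasing on $[0,\epsilon_0]$ with $\phi(\pm\epsilon_0)=0$ and $\phi''\le 0$, as follows. On the outer shell $\sigma_0^2 L(\sigma_0)\le|x|\le\epsilon_0$ --- where, for $t\le\tau$, the drift $-w_t^{-2}\tilde Y_t^{\sigma_0}$ is genuinely repulsive, i.e. of sign $\mathrm{sign}(\tilde\mu_t^{\sigma_0})$ and of modulus $\ge c_1(1-\rho(\sigma_0)-\eta_0)\ge c_1/2$, by \textrm{\rm (A1)} and the bound on $\triangle^{\sigma_0}$ --- set $\phi'(x):=-K\,\mathrm{sign}(x)$ for a constant $K=K(\sigma_0)$. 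On the inner disc $|x|<\sigma_0^2 L(\sigma_0)$ --- where a priori only $|\tilde Y_t^{\sigma_0}|\le 1+\eta_0$, hence $|\mathrm{drift}|\le 2c_2$ --- let $\phi$ solve $2c_2|\phi'|+\tfrac12 c_1\sigma_0^2\phi''=-1$ with $\phi'(0)=0$, which integrates to $\phi'(x)=(2c_2)^{-1}(1-e^{\lambda x})$ on $[0,\sigma_0^2 L(\sigma_0)]$ with $\lambda=4c_2/(c_1\sigma_0^2)$. Choosing $K:=(2c_2)^{-1}(e^{4c_2 L(\sigma_0)/c_1}-1)$ glues the two pieces into a $\mathcal{C}^1$ function, with $K\ge 2/c_1$ for $\sigma_0$ small. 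By construction $\mathcal{A}_t\phi(\tilde\mu_t^{\sigma_0})\le -1$ for all $t\le\tau$, where $\mathcal{A}_t:=-w_t^{-2}\tilde Y_t^{\sigma_0}\partial_x+\tfrac12\sigma_0^2 w_t^{-2}\partial_{xx}$: in the outer shell because $\phi''=0$ while $\phi'$ and the drift have opposite signs with product at most $-c_1K/2\le -1$; in the inner disc because $\phi''\le 0$, $w^{-2}\ge c_1$ and $|\mathrm{drift}|\le 2c_2$ bound $\mathcal{A}_t\phi$ by the left-hand side of the defining ODE.

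Itô's formula for $\phi(\tilde\mu_{\cdot\wedge\tau}^{\sigma_0})$ (legitimate since $\phi\in\mathcal{C}^1$ with bounded piecewise-continuous $\phi''$, and the martingale part is genuine because $|\phi'|\le K<\infty$ for fixed $\sigma_0$), combined with $\phi\ge 0$ and $\mathcal{A}_t\phi\le -1$ up to $\tau$, gives $\mathbb{E}[\,t\wedge\tau\,]\le\phi(0)$ for every $t\le\tilde t_0$; letting $t\uparrow\tilde t_0$ and using Markov's inequality yields $\mathbb{P}(\tau\ge\tilde t_0)\le\phi(0)/\tilde t_0$. It then remains to evaluate $\phi(0)=\int_0^{\epsilon_0}(-\phi')=K\big(\epsilon_0-\sigma_0^2 L(\sigma_0)\big)+(2c_2)^{-1}\big[\lambda^{-1}(e^{4c_2 L(\sigma_0)/c_1}-1)-\sigma_0^2 L(\sigma_0)\big]\le C\,\sigma_0^2 L^2(\sigma_0)\,e^{4c_2 L(\sigma_0)/c_1}$ for a constant $C=C(c_1,c_2)$. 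The growth restriction $L(\sigma_0)=o(|\ln\sigma_0|^{1/8})$ from \textrm{\rm (A1)} forces $L^2(\sigma_0)e^{4c_2 L(\sigma_0)/c_1}=e^{o(|\ln\sigma_0|)}=\sigma_0^{-o(1)}$, so $\phi(0)=\sigma_0^{2-o(1)}$, while $\tilde t_0/\sigma_0=\tilde t_0/t_0\to+\infty$; hence $\phi(0)/\tilde t_0\le\sigma_0^{1-o(1)}(\tilde t_0/\sigma_0)^{-1}\to 0$. Together with $\mathbb{P}(\tau_\triangle\le\tilde t_0)\to 0$ and $\{\tau_{\epsilon_0}(\tilde\mu^{\sigma_0})>\tilde t_0\}\subseteq\{\tau\ge\tilde t_0\}\cup\{\tau_\triangle\le\tilde t_0\}$, this proves the claim.

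The main obstacle is precisely the inner disc $|x|<\sigma_0^2 L(\sigma_0)$: there \textrm{\rm (A1)} gives no sign information on the drift, which may therefore be confining, producing a Kramers-type barrier --- this is the origin of the exponential factor $e^{4c_2 L(\sigma_0)/c_1}$ in $\phi(0)$. The estimate only closes because \textrm{\rm (A1)} forces $L$ to grow slower than $|\ln\sigma_0|^{1/8}$, which keeps this exponential of size $\sigma_0^{-o(1)}$, beaten by the gain $\tilde t_0\gg t_0=\sigma_0$. A secondary point is to match the two pieces of $\phi$ at $\pm\sigma_0^2 L(\sigma_0)$ in a $\mathcal{C}^1$ (equivalently, concave) manner, so that no positive local-time term spoils Itô's formula.
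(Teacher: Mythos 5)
Your proof is correct, but it takes a genuinely different route from the paper's. The paper rescales space and time ($\check X_t=\sigma_0^{-2}\tilde\mu^{\sigma_0}_{\sigma_0^2 t}$) and then relies on Lemma \ref{le:5}: a Girsanov change of measure that removes the drift altogether and yields, on each of roughly $\sigma_0^{-1}$ successive time blocks, a \emph{uniform} lower bound of order $\frac1c\exp(-cL^4(\sigma_0))$ on the conditional probability of exiting the level $L^2(\sigma_0)$; iterating by conditioning gives a geometric decay of ${\mathbb P}(\tau_{\epsilon_0}(\tilde\mu^{\sigma_0})>\tilde t_0)$. Notably, the paper's argument uses only (A2) — the drift is treated as an adversary of modulus at most $c_2$ everywhere in $[-\epsilon_0,\epsilon_0]$, and (A1) plays no role in this proposition. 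You instead bound the \emph{expected} exit time by a Lyapunov function and conclude by Markov's inequality, and you invoke (A1) to split $[-\epsilon_0,\epsilon_0]$ into an outer shell where the drift is provably repulsive (so a linear $\phi$ suffices there) and an inner disc of width $\sigma_0^2L(\sigma_0)$ where you pay a Kramers barrier $e^{4c_2L(\sigma_0)/c_1}$. The trade-off is instructive: the paper's proof is sign-blind and hence a statement about arbitrary Itô processes with $[-1,1]$-valued drift factor, needing $L^4(\sigma_0)=o(|\ln\sigma_0|)$; yours needs the extra input of (A1) (or, if one runs your barrier ODE over the whole interval, a barrier $e^{CL^2(\sigma_0)}$ with no appeal to (A1)), but closes under the weaker growth restriction $L(\sigma_0)=o(|\ln\sigma_0|)$. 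Both are comfortably covered by the hypothesis $L(\sigma_0)|\ln\sigma_0|^{-1/8}\to0$ in (A1). Your technical points all check out: the curve $\psi_t=(r_\delta-r_t)_++T^{-1}(t-\delta/4)_+$ is admissible in (A1) and vanishes on $[0,\delta/4]$; the two pieces of $\phi$ glue in a $\mathcal C^1$, concave fashion at $\pm\sigma_0^2L(\sigma_0)$, so the It\^o--Meyer formula produces no local-time contribution; and $\phi(0)/\tilde t_0\le C\sigma_0^2L^2(\sigma_0)e^{4c_2L(\sigma_0)/c_1}/\tilde t_0=\sigma_0^{1-o(1)}\cdot(\sigma_0/\tilde t_0)\to0$ precisely because $\tilde t_0/t_0\to+\infty$ with $t_0=\sigma_0$.
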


\subsubsection{A technical lemma}
The proof of Proposition \ref{transitionpoint} is based upon the following general lemma:

\begin{lem}
\label{le:5}
For a positive continuous (deterministic) path $(\check{w}_{t})_{t \in [0,T]}$, let 
 $(\check{X}_{t})_{t \in [0,T]}$ be a one-dimensional It\^o process of the form
\begin{equation*}
d \check{X}_{t} = - \check w^{-2}_t  \check{Y}_{t} dt + \check w_t^{-1} d \check B_{t}, \quad t \in [0,T],
\end{equation*}
where 
$(\check{B}_{t})_{t \in [0,T]}$ is a Brownian motion 
with respect to some filtration 
$(\check{\mathcal F}_{t})_{t \in [0,T]}$
and
$(\check{Y}_t)_{t\in[0,T]}$ is a $[-1,1]$-valued adapted process. 
For a real $a \geq 1$ and some stopping time $\varrho$ with respect to the filtration 
$(\hat{\mathcal F}_{t})_{t \in [0,T]}$, 
let 
$$\tau := \inf\{ t \geq \varrho : \vert \check{X}_{t} \vert \geq a\}.$$
Then, on the event $\{ \vert \check{X}_{\varrho} \vert < a \}$, we have 
\begin{equation*}
{\mathbb P} \bigl( \tau \leq T \, \vert \, 
\check{\mathcal F}_{\varrho}
 \bigr) \geq \frac1c \exp \bigl( - c (\check k_{T} - \check k_{\varrho}) \bigr) \exp\bigg(-  \frac{c a^2}{ (\check{k}_T - \check{k}_{\varrho})} \bigg), 
\end{equation*}
where $c$ is a strictly positive universal  constant and 
 \begin{equation*}
 \check{k}_{t} = \int_{0}^t \check w_{s}^{-2} ds, \quad t \in [0,T].  
 \end{equation*}
\end{lem}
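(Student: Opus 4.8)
\textbf{Proof plan for Lemma \ref{le:5}.}

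The plan is to obtain the lower bound by a comparison/Girsanov-type argument combined with an explicit computation for a plain Brownian motion, all conditioned on $\check{\mathcal F}_{\varrho}$ and working on the event $\{|\check X_\varrho| < a\}$. First I would time-change the process so as to remove the coefficient $\check w_t^{-1}$ in front of the noise: introducing the clock $\check k_t = \int_0^t \check w_s^{-2}\,ds$ and writing $\check X_t = \tilde X_{\check k_t}$, one checks via L\'evy's characterization that $\tilde X$ satisfies $d\tilde X_u = -\tilde Y_u\,du + d\tilde B_u$ on the time interval $[0,\check k_T]$, where $\tilde B$ is a Brownian motion and $\tilde Y$ is still $[-1,1]$-valued and adapted (to the time-changed filtration). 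The stopping time $\tau \le T$ translates into $\tilde\tau := \inf\{u \ge \check k_\varrho : |\tilde X_u| \ge a\} \le \check k_T$, and $\check{\mathcal F}_\varrho$ becomes the $\sigma$-field at time $\check k_\varrho$. So it suffices to prove: on $\{|\tilde X_{\check k_\varrho}| < a\}$, conditionally on the time-$\check k_\varrho$ information, $\mathbb P(\tilde\tau \le \check k_T) \ge c^{-1}\exp(-c(\check k_T - \check k_\varrho))\exp(-ca^2/(\check k_T - \check k_\varrho))$, uniformly over all admissible drifts bounded by $1$.

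The next step is to kill the (bounded) drift by Girsanov's theorem. On the window $[\check k_\varrho, \check k_T]$, by the Markov-type structure we may reset the clock to $0$ and set $\ell := \check k_T - \check k_\varrho$, starting point $x_0 := \tilde X_{\check k_\varrho}$ with $|x_0| < a$. Under a measure $\mathbb Q$ equivalent to $\mathbb P$ (conditionally), the process becomes a Brownian motion $W$ started at $x_0$, and $\mathbb P(\tilde\tau \le \ell) = \mathbb E_{\mathbb Q}[\mathbf 1_{\{\tilde\tau \le \ell\}}\,\frac{d\mathbb P}{d\mathbb Q}]$, with the Radon--Nikodym density $\frac{d\mathbb P}{d\mathbb Q} = \exp(\int_0^\ell (-\tilde Y_u)\,dW_u - \tfrac12\int_0^\ell \tilde Y_u^2\,du)$. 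To get a lower bound I would restrict attention to the event $E$ that $W$ hits $a$ (say) before time $\ell$ and has not wandered far: using $|\tilde Y_u| \le 1$, on any event where $\sup_{u\le\ell}|W_u| \le R$ one has $\frac{d\mathbb P}{d\mathbb Q} \ge \exp(-\text{(boundary terms)} - \tfrac12\ell)$ where the boundary terms are controlled by integration by parts, $\int_0^\ell \tilde Y_u\,dW_u = [\tilde Y_u W_u]_0^\ell - \int_0^\ell W_u\,d\tilde Y_u$ — but since $\tilde Y$ need not have bounded variation this is delicate, so instead I would simply use that on $E = \{\tilde\tau \le \ell\} \cap \{\sup_{u \le \ell}|W_u| \le 2a\}$ the stochastic integral $\int_0^{\tilde\tau}(-\tilde Y_u)\,dW_u$ is not pointwise bounded; the clean fix is to bound $\mathbb P(\tilde\tau \le \ell) \ge \mathbb E_{\mathbb Q}[\mathbf 1_E \exp(M_{\tilde\tau} - \tfrac12\langle M\rangle_{\tilde\tau})]$ with $M_u = -\int_0^u \tilde Y_r\,dW_r$, then apply Jensen's inequality in the form $\mathbb E_{\mathbb Q}[\mathbf 1_E\, e^{Z}] \ge \mathbb Q(E)\exp(\mathbb E_{\mathbb Q}[\mathbf 1_E Z]/\mathbb Q(E))$ to the exponent $Z = M_{\tilde\tau} - \tfrac12\langle M\rangle_{\tilde\tau}$; here $\mathbb E_{\mathbb Q}[\mathbf 1_E M_{\tilde\tau}]$ is controlled because $M$ is a $\mathbb Q$-martingale with $\langle M\rangle_{\tilde\tau} \le \ell$ and $\mathbf 1_E$ is the indicator of a stopping-time event, giving $|\mathbb E_{\mathbb Q}[\mathbf 1_E M_{\tilde\tau}]| \le \ell^{1/2}$, while $\mathbb E_{\mathbb Q}[\mathbf 1_E \langle M\rangle_{\tilde\tau}] \le \ell$. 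This yields $\mathbb P(\tilde\tau \le \ell) \ge \mathbb Q(E)\exp(-C(\ell^{1/2} + \ell)/\mathbb Q(E))$.

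Finally I would bound $\mathbb Q(E)$ from below by a purely Brownian computation: starting from $x_0$ with $|x_0| < a$, the probability that $W$ reaches level $a$ (or $-a$, whichever side) before time $\ell$ and stays within $[-2a, 2a]$ is, by the reflection principle and standard Gaussian tail estimates, at least of order $\exp(-c' a^2/\ell)$ for a universal $c'$ (the dominant cost being the displacement of order $a$ over time $\ell$, costing $\exp(-a^2/(2\ell))$ up to constants, times an $O(1)$ factor for the confinement). Combining, $\mathbb P(\tilde\tau \le \ell) \ge \exp(-c'a^2/\ell)\exp(-C(\ell^{1/2}+\ell)e^{c'a^2/\ell})$; since we only need a crude bound of the claimed shape and $a \ge 1$, after absorbing constants and using $\ell \le \check k_T$ one arrives at a bound of the form $c^{-1}\exp(-c\,\ell)\exp(-c a^2/\ell)$ with $\ell = \check k_T - \check k_\varrho$, as required. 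The main obstacle is the Girsanov step: handling the stochastic integral $\int (-\tilde Y_u)\,dW_u$ for a merely bounded, possibly non-BV drift $\tilde Y$, which is why I route through Jensen's inequality at the level of the optional stochastic integral rather than trying to bound the density pointwise; a careful choice of the event $E$ (a stopping-time event on which $\langle M\rangle$ is controlled) is what makes this work.
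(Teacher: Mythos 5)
Your first two steps (Girsanov removal of the bounded drift, then an explicit Gaussian hitting estimate for the driftless process) match the paper's strategy. The gap is in the transfer back from $\mathbb{Q}$ to $\mathbb{P}$. Your Jensen argument gives
\[
\mathbb{P}(\tilde\tau\le \ell)\;\ge\;\mathbb{Q}(E)\,\exp\Bigl(\mathbb{E}_{\mathbb{Q}}[\mathbf 1_E Z]/\mathbb{Q}(E)\Bigr)
\;\ge\;\mathbb{Q}(E)\,\exp\Bigl(-C\bigl(\ell^{1/2}+\ell\bigr)/\mathbb{Q}(E)\Bigr),
\]
and since $\mathbb{Q}(E)\sim e^{-c'a^2/\ell}$, the exponent contains a factor $1/\mathbb{Q}(E)\sim e^{c'a^2/\ell}$: the resulting lower bound is \emph{doubly} exponentially small in $a^2/\ell$, namely of order $\exp(-C\ell^{1/2}e^{c'a^2/\ell})$. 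This is not of the claimed form $\frac1c e^{-c\ell}e^{-ca^2/\ell}$ and cannot be ``absorbed into constants'' --- your last sentence asserts an inequality that is simply false when $a^2/\ell$ is large, which is exactly the regime in which the lemma is used (in Proposition \ref{transitionpoint} one takes $a=L^2(\sigma_0)\to\infty$ with $\ell$ of order one, and the whole iteration \eqref{eq:27:12:1} relies on the single-exponential decay $e^{-cL^4(\sigma_0)}$). Sharpening the Cauchy--Schwarz bound on $\mathbb{E}_{\mathbb{Q}}[\mathbf 1_E M_{\tilde\tau}]$ to $\ell^{1/2}\mathbb{Q}(E)^{1/2}$ does not save the argument: you still get $e^{c'a^2/(2\ell)}$ in the exponent.

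The paper circumvents this by running the change of measure in the opposite direction: writing $A=\{\sup_{t}(\check X_t-\check X_0)\ge a\}$, it bounds
\[
\mathbb{Q}(A\mid\check{\mathcal F}_0)\;=\;\mathbb{E}^{\mathbb{P}}\Bigl[\tfrac{d\mathbb{Q}}{d\mathbb{P}}\mathbf 1_A\,\big|\,\check{\mathcal F}_0\Bigr]
\;\le\;\mathbb{E}^{\mathbb{P}}\Bigl[\bigl(\tfrac{d\mathbb{Q}}{d\mathbb{P}}\bigr)^2\,\big|\,\check{\mathcal F}_0\Bigr]^{1/2}\,
\mathbb{P}(A\mid\check{\mathcal F}_0)^{1/2},
\]
so that $\mathbb{P}(A\mid\check{\mathcal F}_0)\ge \mathbb{Q}(A\mid\check{\mathcal F}_0)^2\,e^{-2\check k_T}$. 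Squaring the Gaussian lower bound only doubles the constant in $e^{-ca^2/\check k_T}$, and the second moment of the density contributes the harmless factor $e^{-c\check k_T}$; no event $E$, no confinement, and no pointwise control of the density are needed. (Two cosmetic remarks: the paper does not time-change but works directly with the Wiener integral $\int_0^T\check w_s^{-1}d\check W_s$, which is centered Gaussian with variance $\check k_T$; and it handles the sign of $\check X_0$ by symmetry rather than choosing which barrier to hit.) If you replace your Jensen step by this reverse Cauchy--Schwarz trick, the rest of your outline goes through.
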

\begin{proof}
Without any loss of generality, we can assume that $\varrho=0$. 
We then let ${\mathbb Q}$ be the probability measure defined by
\begin{equation*}
\frac{d {\mathbb Q}}{d {\mathbb P}}
= \exp \biggl( \int_{0}^T \check{Y}_{s} \check w^{-1}_s d\check{B}_{s} - \frac12 \int_{0}^T
\vert \check{Y}_{s} \vert^2 \check w^{-2}_s ds \biggr). 
\end{equation*}
Under ${\mathbb Q}$, the process
\begin{equation*}
\check{W}_{t} = \check{B}_{t} - \int_{0}^t \check{w}_{s}^{-1} \check{Y}_{s} ds ,
\quad t \in [0,T],
\end{equation*}
is a Brownian motion with respect to the filtration $(\check{\mathcal F}_{t})_{t \in [0,T]}$. Moreover, 
\begin{equation*}
\check{X}_{t} - \check{X}_{0} = 
\int_{0}^t   \check w_{s}^{-1} d \check{W}_{s}, 
\quad t \in [0,T].
\end{equation*}
We then obtain 
\begin{equation*}
\begin{split}
{\mathbb Q} \Bigl( \sup_{t \in [0,T]}  \bigl( \check{X}_{t} - \check{X}_{0} \bigr) \geq a \, \big\vert \, \check{\mathcal F}_{0} \Bigr) 
&=  {\mathbb Q} \biggl( 
\sup_{t \in [0,T]}
\biggl[ \int_{0}^t \check{w}_{s}^{-1} d \check{W}_{s} 
\biggr]
 \geq a
\, \big\vert \, \check{\mathcal F}_{0}
 \biggr)
 \\
 &= {\mathbb Q} \biggl( 
\sup_{t \in [0,T]}
\biggl[ \int_{0}^t   \check{w}_{s}^{-1} d \check{W}_{s} 
\biggr]
 \geq a
 \biggr).
\end{split}
\end{equation*}
Then, by Gaussian estimates, we obtain
\begin{equation*}
\begin{split}
{\mathbb Q} \Bigl( \sup_{t \in [0,T]}  \bigl( \check{X}_{t} - 
\check{X}_{0}  \bigr) \geq a \, \big\vert \, \check{\mathcal F}_{0}  \Bigr) 
&\geq {\mathbb Q} \biggl( 
\int_{0}^T   \check{w}_{s}^{-1} d \check{W}_{s} 
 \geq a \biggr)
 \\
 &\geq \frac{1}{\sqrt{2 \pi \check k_{T}}}
 \int_{a}^{a+1} \exp \bigl( - \frac{x^2}{2 \check{k}_{T}}
 \bigr) dx
 \geq \frac{1}{\sqrt{2 \pi \check k_{T}}}
  \exp \bigl( - \frac{2 a^2}{ \check{k}_{T}}
 \bigr).
\end{split}
\end{equation*}
Now,
\begin{equation*}
\begin{split}
{\mathbb Q} \Bigl( \sup_{t \in [0,T]} \bigl( \check{X}_{t} -  \check{X}_{0} \bigr) \geq a 
\, \big\vert \, \check{\mathcal F}_{0}
\Bigr)
&= {\mathbb E}^{\mathbb P}
\Bigl[ \frac{d {\mathbb Q}}{d {\mathbb P}} {\mathbf 1}_{\{ 
 \sup_{t \in [0,T]} ( \check{X}_{t} - \check{X}_{0} ) \geq a
\}} \, \big\vert \, \check{\mathcal F}_{0} \Bigr]
\\
& \leq {\mathbb E}^{\mathbb P}
\Bigl[ \Bigl( \frac{d {\mathbb Q}}{d {\mathbb P}}\Bigr)^2 \, \vert \, \check{\mathcal F}_{0} \Bigr]^{1/2}
{\mathbb P} \Bigl( \sup_{t \in [0,T]} ( \check{X}_{t} - \check{X}_{0} ) \geq a \, \big\vert \, \check{\mathcal F}_{0} \Bigr)^{1/2}. 
\end{split}
\end{equation*}
It is completely standard to prove that
\begin{equation*}
{\mathbb E}^{\mathbb P} \Bigl[ \Bigl( 
\frac{d {\mathbb Q}}{d {\mathbb P}}
\Bigr)^2 \, \big\vert \, \check{\mathcal F}_{0} \Bigr] \leq \exp (2\check{k}_T). 
\end{equation*}
So, we end up with 
\begin{equation*}
\begin{split}
{\mathbb P} \Bigl( \sup_{t \in [0, T]} \bigl( \check{X}_{t} - \check{X}_{0} \bigr) \geq a
\, \vert \, \check{\mathcal F}_{0} \Bigr)^{1/2}
&\geq \frac{1}{\sqrt{2 \pi \check k_{T}}}
 \exp(-\check{k}_T)\exp\bigl(- \frac{2a^2}{  \check{k}_T} \bigr), 
\end{split}
\end{equation*}
which completes the proof on the event $\check X_{0} \geq 0$. Changing $(\check{X}_{t})_{t \in [0,T]}$ into $(-\check{X}_{t})_{t \in [0,T]}$, we easily tackle the case when $\check{X}_{0} \leq 0$.  
\end{proof}

\subsubsection{Proof of Proposition \ref{transitionpoint}}
\begin{proof}
Recall by assumption (A2) that
\begin{align*}
d\tilde \mu^{\sigma_{0}}_t & = - w_t^{-2} \tilde Y^{\sigma_{0}}_t dt + \sigma_{0} w_t^{-1}
 d B_t, \quad \forall t \in [0,T]. \notag 
\end{align*}
By using the change of variables, $$\check X_t := \sigma_{0}^{-2} \tilde \mu^{{\sigma_{0}}}_{\sigma_{0}^2 t}, \quad 
\check Y_t  := \tilde Y^{\sigma_{0}}_{\sigma_{0}^2 t}, \quad \check w_t : = 
w_{\sigma_{0}^2 t}, \quad \check{B}_t := \sigma_{0}^{-1} {B}_{\sigma_{0}^2 t }, \quad \forall t \in [0,\sigma_{0}^{-2} T], $$
it is well checked that 
\begin{equation}
 d \check X_t  = -\check w_t^{-2} \check Y_t  dt + \check w_t^{-1} d\check{B}_t, \quad  \check X_0 = 0, \quad t \in [0, \sigma_{0}^{-2} T].
\end{equation}
Obviously $(\check{B}_{t})_{t \in [0,\sigma_0^{-2} T]}$ is a Brownian motion and 
$(\check X_{t})_{t \in [0,\sigma_0^{-2} T]}$ is an It\^o process for a common rescaled filtration 
$(\check{\mathcal F}_{t})_{t \in [0,\sigma_0^{-2} T]}$.  
We also let
\begin{equation*}
\check{\tau}(\sigma_0) := \inf\bigl\{ t \geq 0 : \vert \check X_{t} \vert \geq   L^2(\sigma_{0})\bigr\} = \sigma_0^{-2} \tau_{\epsilon_{0}}\bigl(\tilde \mu^{\sigma_{0}}\bigr),
\qquad \inf \emptyset = \sigma_{0}^{-2} T.
\end{equation*}

We claim that for the same constant $c>0$ as in the statement of Lemma \ref{le:5}, it holds, for any integer $i \in \{0,\cdots,N-1\}$, with $N:=\lfloor \sigma_{0}^{-2} \rfloor$, 
\begin{equation}
\label{eq:appli:le:5}
\begin{split}
&{\mathbb P} \bigg( \check{\tau}(\sigma_0) \geq \frac{(i+1)}{N \sigma_{0}^2} T \, \bigg| \, \check{\tau}(\sigma_0) > \frac{i T}{N \sigma_{0}^2} \bigg)
\\
&\hspace{15pt} \leq 1- \frac1c \exp \Bigl( - c \sigma_0^{-2}
\bigl(k_{\frac{(i+1)T}{N}} - k_{\frac{i T}{N}} \bigr)
\Bigr)  \exp \bigg(- \frac{c \sigma_{0}^2 L^4(\sigma_{0}) }{
k_{\frac{(i+1)T}{  N}} - k_{\frac{i T}{N}}} 
\bigg).
\end{split}
\end{equation}
The proof works as follows.
We consider the process $(\check{X}_{t})_{t \in [\check \tau(\sigma_{0})\wedge \frac{i T}{N \sigma_{0}^2},\frac{T}{\sigma_{0}^2}]}$
 and we apply Lemma 
\ref{le:5}
with $\varrho = \check \tau(\sigma_{0}) \wedge \frac{i T}{N \sigma_{0}^2}$ and 
$a=L^2(\sigma_{0})$. On the event $\{ \check \tau(\sigma_{0})  >  \frac{i T}{N \sigma_{0}^2} \}$, we get
\begin{equation*}
{\mathbb P}
\biggl(  \check{\tau}(\sigma_0) \leq \frac{(i+1)}{N \sigma^2_{0}} T \, \bigg| \, 
\check{\mathcal F}_{\check{\tau}(\sigma_0) \wedge \frac{i T}{N \sigma_{0}^2}}
 \biggr) \geq 
 \frac1c \exp \bigl( - c
(\check k_{\frac{(i+1)T}{N \sigma_{0}^2}} - \check k_{\frac{i T}{N \sigma_{0}^2}})
\bigr)
  \exp 
 \Bigl( - \frac{c   L^4(\sigma_{0})}{ (
 \check k_{\frac{(i+1)T}{N \sigma_{0}^2}}
 -
 \check k_{\frac{i T}{N \sigma_{0}^2}}
)  } \Bigr),
\end{equation*}
with 
\begin{equation*}
\check{k}_{t} = \int_{0}^{t}
\check{w}_{s}^{-2}
ds =
\int_{0}^t
 w^{-2}_{\sigma_{0}^2 s}
 ds
 = 
\sigma_0^{-2} k_{\sigma_{0}^2 t},
\end{equation*}
from which we get \eqref{eq:appli:le:5}. 
Using the fact that, for $\sigma_{0} \in (0,1)$, $1/N$ is between $\sigma_{0}^2$ and 
$2 \sigma_{0}^2$, we can change the value of the constant $c$ (allowing $c$ to depend 
on $T$ and $\kappa$) so that 
\begin{equation}
\label{eq:appli:le:5:b}
\begin{split}
&{\mathbb P} \bigg( \check{\tau}(\sigma_0) > \frac{(i+1)}{N \sigma_{0}^2} T \, \bigg| \, \check{\tau}(\sigma_0) > \frac{i T}{N \sigma_{0}^2} \bigg)
 \leq 1- \frac1c  \exp \Bigl(- c L^4(\sigma_{0})  
\Bigr).
\end{split}
\end{equation}
By iterating \eqref{eq:appli:le:5:b}, we deduce that 
\begin{equation*}
{\mathbb P} \Bigl( \check{\tau}(\sigma_0) > \frac{i}{N \sigma_{0}^2}T \Bigr) \leq 
\Bigl( 1 -
\frac{1}{c}
\exp \bigl(-  c   L^4(\sigma_{0})
\bigr) \Bigr)^i.
\end{equation*}
Recall that $\ln(1-u) \leq - u$ for $u \in [0,1)$. Thus,
\begin{equation}
\label{eq:27:12:1}
\begin{split}
{\mathbb P} \Bigl( \check{\tau}(\sigma_0) > \frac{i}{N \sigma_{0}^2}T \Bigr) &\leq 
\exp \Bigl( i \ln 
\Bigl( 1 -
\frac1{c}
\exp \bigl(-  c { L^4(\sigma_{0})}
\bigr)\Bigr) \Bigr)\\
& \leq \exp \Bigl( -  \frac{i}{c}
\exp \bigl(-  c { L^4(\sigma_{0})}
\bigr)  \Bigr).
\end{split}
\end{equation}
Choose $i=\lfloor \sigma_{0}^{-1} /(2T) \rfloor$ in \eqref{eq:27:12:1} and deduce that, 
for 
$\sigma_{0}$ small enough,
\begin{equation*}
{\mathbb P} \Bigl( 
 \tau_{\epsilon_{0}}\bigl(\tilde \mu^{\sigma_0}\Bigr) 
\geq \sigma_{0}  
  \bigr) = {\mathbb P} \Bigl(
  \check{\tau}(\sigma_0) \geq 
   \sigma_0^{-1}  
    \Bigr) \leq 
\exp \Bigl[ -  c'{\sigma_{0}^{-1}}
\exp \bigl(-   c  L^4(\sigma_{0})
\bigr)  \Bigr],  
\end{equation*}
for a new constant $c'$. 
Since 
$\lim_{\sigma_{0} \rightarrow 0} L^4(\sigma_{0}) \vert \ln(\sigma_{0}) \vert^{-1/2}=0$, 
the right-hand side tends to $0$ with $\sigma_{0}$, which 
completes the proof.

\end{proof}

\subsection{Restarting from the transition point}
\label{subse:restart}

As before, we assume that (A1), (A2) and (A3) are in force. 
In order to investigate what happens after 
$\tau_{\epsilon_0}(\tilde \mu^{\sigma_0})$, we
 prove first the following lemma:
\begin{lem}
\label{increasingbounds}
There exists a positive constant $ c_\delta \in (0,1) $,
 such that, for all $\gamma \in (0,c_{\delta})$, 
\begin{align*}
(1-\gamma)\int_{\delta/2}^t w^{-2}_s ds \geq \bigl( r_{\delta} - r_{t} 
\bigr)_{+}
+ \frac12 \bigl( r_{\delta/2} - r_{t \wedge \delta} \bigr), \quad t\in \bigl[\frac{\delta}2,T\bigr]. 
\end{align*}

\end{lem}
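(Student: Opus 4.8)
Lemma \ref{increasingbounds} is an elementary inequality about the function $t \mapsto r_t = \int_t^T w_s^{-2}\,ds$, so the plan is to reduce everything to the monotonicity of $r$ and to the size of $r_\delta - r_{\delta/2} = \int_{\delta/2}^\delta w_s^{-2}\,ds$, which is a fixed strictly positive number. I would introduce the shorthand $k_t = \int_0^t w_s^{-2}\,ds$ (already used in Theorem \ref{mainzeronoise}) so that $\int_{\delta/2}^t w_s^{-2}\,ds = k_t - k_{\delta/2}$ and $r_s = k_T - k_s$; the claimed inequality then becomes a statement purely about the increasing function $k$. The point of the $\gamma$-correction is that when $\gamma = 0$ the inequality holds with room to spare, and one only needs $\gamma$ small enough (depending on $\delta$) to absorb the loss $\gamma(k_t - k_{\delta/2})$ into that room, uniformly in $t$.

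Concretely, I would split the interval $[\delta/2, T]$ into the two regions $[\delta/2, \delta]$ and $[\delta, T]$ dictated by the presence of $t \wedge \delta$ and of $(r_\delta - r_t)_+$. On $[\delta/2, \delta]$ we have $(r_\delta - r_t)_+ = 0$ and $t \wedge \delta = t$, so the desired bound reads $(1-\gamma)(k_t - k_{\delta/2}) \geq \tfrac12(k_t - k_{\delta/2})$, i.e. $(1/2 - \gamma)(k_t - k_{\delta/2}) \geq 0$, which holds as soon as $\gamma \leq 1/2$. On $[\delta, T]$ we have $t \wedge \delta = \delta$, so the right-hand side equals $(r_\delta - r_t) + \tfrac12(r_{\delta/2} - r_\delta) = (k_t - k_\delta) + \tfrac12(k_\delta - k_{\delta/2})$, while the left-hand side is $(1-\gamma)(k_t - k_{\delta/2}) = (1-\gamma)\big[(k_t - k_\delta) + (k_\delta - k_{\delta/2})\big]$. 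Subtracting, the inequality is equivalent to
\begin{equation*}
\tfrac12 (k_\delta - k_{\delta/2}) \;\geq\; \gamma\,(k_t - k_{\delta/2}),
\quad t \in [\delta, T].
\end{equation*}
Since $k$ is non-decreasing and bounded, $k_t - k_{\delta/2} \leq k_T - k_{\delta/2} = r_{\delta/2} - r_T = r_{\delta/2}$ for all such $t$, so it suffices to take
\begin{equation*}
\gamma \;<\; c_\delta \;:=\; \min\Bigl\{ \tfrac12,\ \frac{k_\delta - k_{\delta/2}}{2\,r_{\delta/2}} \Bigr\}
\;=\; \min\Bigl\{ \tfrac12,\ \frac{\int_{\delta/2}^{\delta} w_s^{-2}\,ds}{2\int_{\delta/2}^{T} w_s^{-2}\,ds} \Bigr\},
\end{equation*}
which is strictly positive because $w$ is continuous and strictly positive on $[0,T]$, so both integrals are finite and the numerator is nonzero. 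One checks $c_\delta \in (0,1)$ directly. This handles both cases and completes the argument.

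There is essentially no real obstacle here; the only thing to be careful about is the bookkeeping at the junction $t = \delta$ (the two expressions for the right-hand side must agree there, which they do since $(r_\delta - r_\delta)_+ = 0$), and making sure the constant $c_\delta$ is extracted in a way that does not depend on $t$ — which is why I bound $k_t - k_{\delta/2}$ by its value at $t = T$ rather than keeping it $t$-dependent. I would also remark that one could equally well define $c_\delta$ with a strict inequality replaced by ``$\leq$'' in the definition and ask $\gamma$ strictly below it, which is the formulation the statement uses; the proof is indifferent to this.
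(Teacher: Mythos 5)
Your proof is correct and follows essentially the same route as the paper: split at $t=\delta$, observe that on $[\delta/2,\delta]$ the inequality reduces to $1-\gamma\geq 1/2$, and on $[\delta,T]$ absorb the loss $\gamma\int_{\delta/2}^t w_s^{-2}\,ds$ into the fixed margin $\tfrac12\int_{\delta/2}^{\delta}w_s^{-2}\,ds$. The only (immaterial) difference is your choice of denominator $\int_{\delta/2}^{T}w_s^{-2}\,ds$ in $c_\delta$ where the paper uses the slightly larger $\int_{0}^{T}w_s^{-2}\,ds$.
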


\begin{proof}
Define $c_\delta$ as  $$0 < c_\delta := \frac{\int_{\delta/2}^\delta w^{-2}_r dr}{2 \int_0^T w^{-2}_r dr} < \frac12. $$
Observe that, whenever  $\gamma \in (0, c_\delta)$,
\begin{equation*}
\gamma \int_{\delta/2}^T w^{-2}_r dr \leq \frac12 \int_{\delta/2}^\delta w^{-2}_r dr,
\end{equation*}
and then, for $t \in [\delta,T]$, 
\begin{equation*}
(1-\gamma)\int_{\delta/2}^t w^{-2}_s ds \geq 
\int_{\delta/2}^t w^{-2}_s ds
- \frac12
\int_{\delta/2}^\delta w^{-2}_r dr \geq 
\int_{\delta}^t w^{-2}_r dr + \frac12 \bigl( r_{\delta/2} - r_{\delta} \bigr).
\end{equation*}
This completes the proof when $t \in [\delta,T]$. Using the fact that $c_{\delta} \leq 1/2$, the result 
is obviously true when $t \in [\delta/2,T]$. 
\end{proof}

The above lemma prompts us to 
introduce (for the same function $L$ as in 
\eqref{eq:transition:point})
\begin{equation}
\label{eq:tau2}
\tau^{+}_{\gamma}(\sigma_{0}) := \inf\Bigl\{ t \in \bigl[
 \tau_{\epsilon_0}\bigl(\tilde \mu^{\sigma_0}\bigr)
, T\bigr] : \tilde \mu^{\sigma_0}_t <   \sigma_0^2 L(\sigma_{0})   + (1- \gamma)\int_{
 \tau_{\epsilon_0}(\tilde \mu^{\sigma_0})
}^t w^{-2}_s ds \Bigr\},
\end{equation}
with the convention that $\tau^{+}_{\gamma}(\sigma_{0}) := T$ if the set is empty.
Similarly, we let
\begin{equation}
\label{eq:tau3}
\tau^{-}_{\gamma}(\sigma_{0}) = \inf\Bigl\{ t \in \bigl[ \tau_{\epsilon_0}\bigl(\tilde \mu^{\sigma_0}\bigr), T\bigr] : \tilde \mu^{\sigma_0}_t > -   \sigma_0^{2} L(\sigma_{0})   - (1- \gamma)\int_{
 \tau_{\epsilon_0}(\tilde \mu^{\sigma_0})
}^t w^{-2}_s ds \Bigr\}.
\end{equation}
\begin{prop}
\label{boundasymp}
For any $\gamma \in (0,c_{\delta})$, it holds that
$$  \mathbb{P}\Bigl( \tau^{+}_{\gamma}(\sigma_0) < T,\tilde \mu^{\sigma_0}_{\tau_{\epsilon_0}(\tilde \mu^{\sigma_0})}
 = \epsilon_{0} \Bigr) \rightarrow 0, \hspace{5mm} \textrm{\rm as} \hspace{5mm} \sigma_0 \rightarrow 0, $$
and
$$ \mathbb{P}\Bigl( \tau^{-}_{\gamma}(\sigma_0) < T, \tilde \mu^{\sigma_0}_{\tau_{\epsilon_0}(\tilde \mu^{\sigma_0})}
 = - \epsilon_{0} \Bigr) \rightarrow 0, \hspace{5mm} \textrm{\rm as} \hspace{5mm} \sigma_0 \rightarrow 0.$$
\end{prop}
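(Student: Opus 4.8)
The plan is to prove the first assertion, about $\tau^{+}_{\gamma}(\sigma_{0})$; the second follows verbatim after replacing $(\tilde\mu^{\sigma_{0}},B)$ by $(-\tilde\mu^{\sigma_{0}},-B)$, which has the same law by (A3). Write $\tau := \tau_{\epsilon_{0}}(\tilde\mu^{\sigma_{0}})$ and, on the event $\{\tilde\mu^{\sigma_{0}}_{\tau}=\epsilon_{0}\}$, introduce the random comparison curve $\phi_{t}:=\sigma_{0}^{2}L(\sigma_{0})+(1-\gamma)\int_{\tau}^{t}w_{s}^{-2}ds$ for $t\in[\tau,T]$ and $D_{t}:=\tilde\mu^{\sigma_{0}}_{t}-\phi_{t}$, so that $\tau^{+}_{\gamma}(\sigma_{0})=\inf\{t\geq\tau:D_{t}<0\}$. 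Since $\epsilon_{0}=\sigma_{0}^{2}L^{2}(\sigma_{0})>\sigma_{0}^{2}L(\sigma_{0})=\phi_{\tau}$ for $\sigma_{0}$ small, we have $D_{\tau}=\sigma_{0}^{2}(L^{2}(\sigma_{0})-L(\sigma_{0}))>0$, and on $\{\tau^{+}_{\gamma}(\sigma_{0})<T,\,\tilde\mu^{\sigma_{0}}_{\tau}=\epsilon_{0}\}$ continuity of $D$ gives $D_{\tau^{+}_{\gamma}(\sigma_{0})}=0$.

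The first step is to show that, outside events of vanishing probability, the drift of $D$ is at least $\tfrac{\gamma}{2}w_{t}^{-2}$ on $[\tau,\tau^{+}_{\gamma}(\sigma_{0})]$, where $\tilde\mu^{\sigma_{0}}_{t}\geq\phi_{t}>0$ by definition. Applying Proposition \ref{transitionpoint} with $\tilde t_{0}=\sqrt{\sigma_{0}}$ gives $\mathbb{P}(\tau>\delta/2)\to0$, so I work on $\{\tau\leq\delta/2\}$, where $\phi_{t}\geq\sigma_{0}^{2}L(\sigma_{0})+\psi_{t}$ with $\psi_{t}:=(1-\gamma)\bigl(\int_{\delta/2}^{t}w_{s}^{-2}ds\bigr)_{+}$. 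By Lemma \ref{increasingbounds}, which is where $\gamma<c_{\delta}$ enters, $\psi$ is non-negative, non-decreasing, continuous and strictly above $t\mapsto(r_{\delta}-r_{t})_{+}$ on the left-open interval $(\delta/2,T]$, so (A1) applies with this $\psi$ and yields $\varepsilon(\sigma_{0}):=\sup\{|\tilde\theta^{\sigma_{0}}(t,x)-\theta(t,x)|:|x|\geq\sigma_{0}^{2}L(\sigma_{0})+\psi_{t}\}\to0$; moreover on $[\tau,\tau^{+}_{\gamma}(\sigma_{0})]$ one has $\tilde\mu^{\sigma_{0}}_{t}\geq\sigma_{0}^{2}L(\sigma_{0})+\psi_{t}>(r_{\delta}-r_{t})_{+}$, so formula \eqref{teta} forces $\theta(t,\tilde\mu^{\sigma_{0}}_{t})=-1$ there. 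Combining this with (A2) on the event $\{\sup_{[0,T]}|\triangle^{\sigma_{0}}|\leq\gamma/4\}$, whose probability tends to $1$, and taking $\sigma_{0}$ small enough that $\varepsilon(\sigma_{0})\leq\gamma/4$, I get $\tilde Y^{\sigma_{0}}_{t}\leq-1+\gamma/2$, hence $-w_{t}^{-2}\tilde Y^{\sigma_{0}}_{t}-(1-\gamma)w_{t}^{-2}\geq\tfrac{\gamma}{2}w_{t}^{-2}$ on $[\tau,\tau^{+}_{\gamma}(\sigma_{0})]$.

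The second step is an exponential estimate. Integrating the drift bound, on the two good events above and on $\{\tilde\mu^{\sigma_{0}}_{\tau}=\epsilon_{0}\}$ we get $D_{t}\geq D_{\tau}+\tfrac{\gamma}{2}\int_{\tau}^{t}w_{s}^{-2}ds+\sigma_{0}\int_{\tau}^{t}w_{s}^{-1}dB_{s}$ for $t\in[\tau,\tau^{+}_{\gamma}(\sigma_{0})]$; taking $t=\tau^{+}_{\gamma}(\sigma_{0})$ on $\{\tau^{+}_{\gamma}(\sigma_{0})<T\}$, where $D$ vanishes, forces
\begin{equation*}
-\sigma_{0}\int_{\tau}^{\tau^{+}_{\gamma}(\sigma_{0})}w_{s}^{-1}dB_{s}-\frac{\gamma}{2}\int_{\tau}^{\tau^{+}_{\gamma}(\sigma_{0})}w_{s}^{-2}ds\ \geq\ \sigma_{0}^{2}\bigl(L^{2}(\sigma_{0})-L(\sigma_{0})\bigr).
\end{equation*}
I would then bound the probability of this event by considering the Dol\'eans--Dade exponential $Z_{t}:=\exp\bigl(-\tfrac{\gamma}{\sigma_{0}}\int_{\tau}^{t}w_{s}^{-1}dB_{s}-\tfrac{\gamma^{2}}{2\sigma_{0}^{2}}\int_{\tau}^{t}w_{s}^{-2}ds\bigr)$ for $t\geq\tau$ (working with $\mathcal{E}(-\tfrac{\gamma}{\sigma_{0}}\int_{0}^{\cdot}\mathbf 1_{\{s>\tau\}}w_{s}^{-1}dB_{s})$ to sidestep the random initial time), which is a non-negative local martingale, hence a supermartingale, with value $1$ at $\tau$: multiplying the last display by $\gamma/\sigma_{0}^{2}$ shows the event it describes is contained in $\{\sup_{t\in[\tau,T]}Z_{t}\geq\exp(\gamma(L^{2}(\sigma_{0})-L(\sigma_{0})))\}$, whose probability is at most $\exp(-\gamma(L^{2}(\sigma_{0})-L(\sigma_{0})))$ by Doob's maximal inequality for non-negative supermartingales. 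Since $L(\sigma_{0})\to+\infty$ this tends to $0$, and since $\{\tau^{+}_{\gamma}(\sigma_{0})<T,\,\tilde\mu^{\sigma_{0}}_{\tau}=\epsilon_{0}\}$ is contained in the union of this event, $\{\tau>\delta/2\}$ and $\{\sup_{[0,T]}|\triangle^{\sigma_{0}}|>\gamma/4\}$, the proof is complete.

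I expect the delicate point to be the matching of the scales $\epsilon_{0}=\sigma_{0}^{2}L^{2}(\sigma_{0})$ and the buffer $\sigma_{0}^{2}L(\sigma_{0})$ hard-wired into $\tau^{+}_{\gamma}(\sigma_{0})$. The process is released only $\sigma_{0}^{2}(L^{2}(\sigma_{0})-L(\sigma_{0}))$ above the comparison curve $\phi$, a gap which itself vanishes; the estimate closes solely because, after rescaling the noise by $\sigma_{0}$, the favorable relative drift $\tfrac{\gamma}{2}w^{-2}$ produces an exponential rate of order $\gamma(L^{2}(\sigma_{0})-L(\sigma_{0}))\to+\infty$ --- so one genuinely needs the transition height to carry a strictly larger power of $L$ than the buffer. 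A secondary subtlety is that the drift lower bound holds only while $\tilde\mu^{\sigma_{0}}$ stays above $\phi$, i.e. precisely on $[\tau,\tau^{+}_{\gamma}(\sigma_{0})]$, so the stochastic integral must be stopped exactly there, where $D$ vanishes.
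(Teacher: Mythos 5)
Your proof is correct and follows essentially the same route as the paper's: work on the events $\{\tau_{\epsilon_{0}}(\tilde\mu^{\sigma_{0}})\le\tilde t_{0}\}$ and $\{\sup_{t}|\triangle^{\sigma_{0}}_{t}|\le\gamma/4\}$, use Lemma \ref{increasingbounds} to place the comparison curve in the domain where (A1) applies and $\theta\equiv-1$, deduce the relative drift $\tfrac{\gamma}{2}w_{t}^{-2}$ up to $\tau^{+}_{\gamma}(\sigma_{0})$, and close with Doob's maximal inequality for the exponential supermartingale, the rate $\gamma(L^{2}(\sigma_{0})-L(\sigma_{0}))\to+\infty$ coming precisely from the gap between $\epsilon_{0}=\sigma_{0}^{2}L^{2}(\sigma_{0})$ and the buffer $\sigma_{0}^{2}L(\sigma_{0})$. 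The only cosmetic differences are that you phrase the argument through the difference process $D_{t}$ and name the admissible curve $\psi$ explicitly, whereas the paper works directly on $\tilde\mu^{\sigma_{0}}_{t}$.
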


\begin{proof}
Consider $\sigma_0 \in (0,1)$
and call 
$\tilde t_{0}$ an infinitesimal function as in the statement of Proposition 
\ref{transitionpoint}. By 
Proposition 
\ref{transitionpoint}, by 
assumption (A2) in 
Subsection \ref{subse:5:2}
 and by symmetry, it suffices to prove that 
$$  \mathbb{P}\Bigl( \tau^{+}_{\gamma}(\sigma_0) < T, 
 \tau_{\epsilon_0}\bigl(\tilde \mu^{\sigma_0}\bigr) \leq \tilde{t_0}, 
 \tilde \mu^{\sigma_0}_{\tau_{\epsilon_0}(\tilde \mu^{\sigma_0})}
 = \epsilon_{0}, 
 \sup_{t \in [0,T]} \vert \triangle_{t}^{\sigma_{0}}
 \vert \leq \frac{\gamma}{4}
\Bigr) \rightarrow 0, \hspace{5mm} \textrm{\rm as} \hspace{5mm} \sigma_0 \rightarrow 0. $$

\vspace{4pt}

\textit{First step.} 
%
%
%
Throughout the step, we 
work on the event 
$\{
\tau^{+}_{\gamma}(\sigma_0) < T, 
 \tau_{\epsilon_0}\bigl(\tilde \mu^{\sigma_0}\bigr) \leq \tilde{t_0},
  \tilde \mu^{\sigma_0}_{\tau_{\epsilon_0}(\tilde \mu^{\sigma_0})} 
= \epsilon_{0},
 \sup_{t \in [0,T]} \vert \triangle_{t}^{\sigma_{0}}
 \vert \leq \gamma/4
 \}$. So, 
for $\sigma_{0}$ small enough such that $\tilde t_{0} \leq \delta/2$
and $L(\sigma_{0}) \geq 2$, 
we deduce from Lemma 
\ref{increasingbounds} that, for all $ t \in [ \tau_{\epsilon_0}(\tilde \mu^{\sigma_0}),\tau^{+}_{\gamma}({\sigma_0})],$
\begin{align*}
 \tilde \mu^{\sigma_0}_t  &\geq   \sigma_0^2 L(\sigma_{0})   + (1- \gamma) {\mathbf 1}_{t \geq \delta /2}\int_{\delta/2}^t w^{-2}_s ds
> \sigma_0^2 L(\sigma_{0})   + \bigl( r_{\delta} - r_{t} 
\bigr)_{+}
+ \frac12 \bigl( r_{\delta/2} - r_{t \wedge \delta} \bigr)_{+}.
\end{align*}
We deduce the following two things. First,  
\begin{equation}
\label{1}
\begin{split}
&\theta\bigl(t,\tilde \mu^{\sigma_0}_t\bigr) = -\textrm{\rm sign}(\tilde \mu^{\sigma_0}_t) = -1, \quad \forall t \in [ \tau_{\epsilon_0}(\tilde \mu^{\sigma_0}),\tau^{+}_{\gamma}({\sigma_0})].
\end{split}
\end{equation}
Moreover, for all $ t \in [\tau_{\epsilon_0}(\tilde \mu^{\sigma_0}),\tau^{+}_{\gamma}({\sigma_0})]$,
 the point $\tilde \mu^{\sigma_0}_t$ lies in the domain of application of 
 assumption 
(A1) in 
Subsection \ref{subse:5:2}, which yields
\begin{equation}
\label{2}
\bigl|\tilde \Psi\bigl(t,\tilde \mu^{\sigma_0}_t, \sigma_0\bigr)\bigr| \leq  \gamma / 4, 
\quad 
 t \in [\tau_{\epsilon_0}(\tilde \mu^{\sigma_0}),\tau^{+}_{\gamma}({\sigma_0})],
\end{equation}
whenever $\sigma_0>0$ is sufficiently small (uniformly in $t$), and where we let 
$\tilde \Psi(t,x,\sigma_{0}) = \tilde \theta^{\sigma_{0}}(t,x) - \theta(t,x)$.

Hence, for sufficiently small $\sigma_0 >0$ such that claims (\ref{1}) and (\ref{2}) both hold, we observe 
from (A2) that, for all  $t \in [
\tau_{\epsilon_0}(\tilde \mu^{\sigma_0}),\tau^{+}_{\gamma}({\sigma_0})]$, 
\begin{align*}
 \tilde \mu^{\sigma_0}_t &\geq 
 \tilde \mu^{\sigma_0}_{\tau_{\epsilon_0}(\tilde \mu^{\sigma_0})}
  - \int_{
\tau_{\epsilon_0}(\tilde \mu^{\sigma_0})}^t
w^{-2}_s \Bigl(
  \theta\bigl(s,\tilde \mu^{\sigma_0}_s\bigr)  + \tilde \Psi\bigl(s,\tilde \mu^{\sigma_0}_s, \sigma_0\bigr) - \triangle_{s}^{\sigma_{0}} \Bigr)  ds 
    + \sigma_0 \int_{
\tau_{\epsilon_0}(\tilde \mu^{\sigma_0})}^t w^{-1}_s dB_s \\
 &\geq  \epsilon_{0}  + (1- \gamma /2) \int_{\tau_{\epsilon_0}(\tilde \mu^{\sigma_0})}^t w^{-2}_s ds + \sigma_0 \int_{\tau_{\epsilon_0}(\tilde \mu^{\sigma_0})}^t w^{-1}_s dB_s.
\end{align*}

\textit{Second step.}
We work on the same event as in the first step. 
To simplify notation, let us write $\tau = \tau^{+}_{\gamma}( {\sigma_0})$ and observe that
(recall that we assumed $\tau <T$)
 $$\tilde \mu^{\sigma_0}_{\tau} =  \sigma_0^2 L(\sigma_{0})   + (1- \gamma)\int_{
 \tau_{\epsilon_0}(\tilde \mu^{\sigma_0})
}^{\tau} w^{-2}_s ds. $$ 
By the conclusion of the previous step, 
we deduce that there exists $ r \in [\tau_{\epsilon_0}(\tilde \mu^{\sigma_0}), T]$  such that 
$$ 
\sigma_0^2 L(\sigma_{0})   + (1- \gamma)\int_{
 \tau_{\epsilon_0}(\tilde \mu^{\sigma_0})
}^{r} w^{-2}_s ds  \geq \epsilon_{0} + (1- \gamma /2) \int_{\tau_{\epsilon_0}(\tilde \mu^{\sigma_0})}^r w^{-2}_s ds + \sigma_0 \int_{\tau_{\epsilon_0}(\tilde \mu^{\sigma_0})}^r w^{-1}_s dB_s.$$  
Therefore,  on the same event as in the first step, 
there exists $ r \in [\tau_{\epsilon_0}(\tilde \mu^{\sigma_0}), T]$  such that  $$  \hspace{2mm}   \sigma_0^{-1} \int_{\tau_{\epsilon_0}(\tilde \mu^{\sigma_0})}^r w^{-1}_s dB_s+  \sigma_0^{-2} (\gamma  /2) \int_{\tau_{\epsilon_0}(\tilde \mu^{\sigma_0})}^r w^{-2}_s ds  \leq  L(\sigma_{0}) \bigl( 1 - L(\sigma_{0}) \bigr) =: v(\sigma_0). $$ 
We deduce that the event 
$\{
\tau^{+}_{\gamma}(\sigma_0) < T, 
 \tau_{\epsilon_0}\bigl(\tilde \mu^{\sigma_0}\bigr) \leq \tilde{t_0},
  \tilde \mu^{\sigma_0}_{\tau_{\epsilon_0}(\tilde \mu^{\sigma_0})} 
= \epsilon_{0},
 \sup_{t \in [0,T]} \vert \triangle_{t}^{\sigma_{0}}
 \vert \leq \gamma/4
 \}$ is included in the event
\begin{equation*}
\biggl\{ \sup_{r \in [\tau_{\epsilon_0}(\tilde \mu^{\sigma_0}), T]}
\exp \biggl( - 
\gamma \sigma_0^{-1} \int_{\tau_{\epsilon_0}(\tilde \mu^{\sigma_0})}^r w^{-1}_s dB_s -  
\frac12\sigma_0^{-2} \gamma^2 \int_{\tau_{\epsilon_0}(\tilde \mu^{\sigma_0})}^r w^{-2}_s ds
\biggr) \geq \exp \bigl( - \gamma v(\sigma_{0}) \bigr) \biggr\}.
\end{equation*}
\vspace{4pt}

\textit{Third step.} It remains to prove that 
\begin{equation*}
{\mathbb P}\biggl( \sup_{r \in [\tau_{\epsilon_0}(\tilde \mu^{\sigma_0}), T]}
\exp \biggl( - 
\gamma \sigma_0^{-1} \int_{\tau_{\epsilon_0}(\tilde \mu^{\sigma_0})}^r w^{-1}_s dB_s -  
\frac12\sigma_0^{-2} \gamma^2 \int_{\tau_{\epsilon_0}(\tilde \mu^{\sigma_0})}^r w^{-2}_s ds
\biggr) \geq \exp \bigl( - \gamma v(\sigma_{0}) \bigr) \biggr) \rightarrow 0,
\end{equation*}
as $\sigma_{0}$ tends to $0$. 
This is a simple consequence of Doob's maximal inequality for the 
martingale 
\begin{equation*}
\biggl( 
\exp \biggl( - 
\gamma \sigma_0^{-1} \int_{\tau_{\epsilon_0}(\tilde \mu^{\sigma_0})}^{t \vee \tau_{\epsilon_0}(\tilde \mu^{\sigma_0})} w^{-1}_s dB_s -  
\frac12\sigma_0^{-2} \gamma^2 \int_{\tau_{\epsilon_0}(\tilde \mu^{\sigma_0})}^{t \vee \tau_{\epsilon_0}(\tilde \mu^{\sigma_0})} w^{-2}_s ds
\biggr)
\biggr)_{t \in [0,T]},
\end{equation*}
and of the fact that $v(\sigma_0) \rightarrow - \infty$. 
\end{proof}

\subsection{Conclusion of the proof of
Theorem
\ref{thm:general}
}

\begin{proof}
\textit{First step.}
It is easily checked that the sequence $({\mathbb P} \circ 
(\tilde \mu^{\sigma_{0}}_{t})_{t \in [0,T]}^{-1})_{\sigma_{0}>0}$ is tight on ${\mathcal C}([0,T];{\mathbb R})$. Also, by (A2), we deduce that, for any limiting point 
${\mathbb P}^{\infty}$, the canonical process  
$(\psi_{t})_{t \in [0,T]}$ on ${\mathcal C}([0,T];{\mathbb R})$ satisfies
$\vert \psi_{t} \vert \leq k_{t}$ for all $t \in [0,T]$, with probability 1 under ${\mathbb P}^{\infty}$.
\vskip 4pt

\textit{Second step.}
In order to proceed further, we need new notation. 
For $t \in [0,\delta/2]$, we let 
$F(t):=\{ \psi \in {\mathcal C}([0,T];{\mathbb R}) : \psi_{s} \geq k_{s} - k_{t}, \ s \in [t,T]\}$ and, for any $\epsilon >0$, 
$F^{\epsilon}(t): =\{ \psi \in {\mathcal C}([0,T];{\mathbb R}) : 
\psi_{s} \geq k_{s} - k_{t} - \epsilon, \ s \in [t,T]\}$.
Obviously, $F(t)$ and $F^{\epsilon}(t)$ are closed subsets of 
${\mathcal C}([0,T];{\mathbb R})$.

We claim that for any limit point ${\mathbb P}^\infty$ 
of the sequence $({\mathbb P} \circ 
(\tilde \mu^{\sigma_{0}}_{s})_{s \in [0,T]}^{-1})_{\sigma_{0}>0}$,  
it holds
\begin{equation*}
{\mathbb P}^\infty \Bigl( \bigl\{
\psi \in {\mathcal C}([0,T];{\mathbb R}) : 
 \psi \in 
F(0) \bigr\} \cup
\bigl\{
\psi \in {\mathcal C}([0,T];{\mathbb R}) : 
 - \psi \in 
F(0) \bigr\}
\Bigr) = 1. 
\end{equation*}
Below, we merely write
$F(0) \cup (-F(0))$ for $
\{
\psi \in {\mathcal C}([0,T];{\mathbb R}) : 
 \psi \in 
F(0) \} \cup \{
\psi \in {\mathcal C}([0,T];{\mathbb R}) : 
 - \psi \in 
F(0)\}$.

We apply Proposition \ref{boundasymp}. 
For  a given $\gamma >0$, it says that 
$$ \lim_{\sigma_{0}\rightarrow 0} 
 \mathbb{P}
\Bigl( \bigl\{ 
 \tau^{+}_{\gamma}(\sigma_0) = T,\tilde \mu^{\sigma_0}_{\tau_{\epsilon_0}(\tilde \mu^{\sigma_0})}
 = \epsilon_{0} 
 \bigr\} 
 \cup 
\bigl\{ 
 \tau^{-}_{\gamma}(\sigma_0) = T,\tilde \mu^{\sigma_0}_{\tau_{\epsilon_0}(\tilde \mu^{\sigma_0})}
 = -\epsilon_{0} 
\bigr\}
 \Bigr)
 = 1.$$ 
Take $t \in (0,\delta/2)$ and fix $\epsilon >0$. 
On the event 
$\{ \tau_{\epsilon_0}(\tilde \mu^{\sigma_0}) \leq t \}  \cap \{ 
 \tau^{+}_{\gamma}(\sigma_0) = T,\tilde \mu^{\sigma_0}_{\tau_{\epsilon_0}(\tilde \mu^{\sigma_0})}
 = \epsilon_{0} 
 \}$, we have
 \begin{equation*}
 \tilde{\mu}^{\sigma_{0}}_{s} \geq (1- \gamma) 
\bigl( 
 k_{s} - k_{t} \bigr), \qquad s \in [t,T],
\end{equation*} 
and, for $\gamma$ small enough,
 \begin{equation*}
 \tilde{\mu}^{\sigma_{0}}_{s} \geq 
 k_{s} - k_{t} - \epsilon, \qquad s \in [t,T],
\end{equation*} 
Therefore, on the event  
$\{ \tau_{\epsilon_0}(\tilde \mu^{\sigma_0}) \leq t \}  \cap \{ 
 \tau^{+}_{\gamma}(\sigma_0) = T,\tilde \mu^{\sigma_0}_{\tau_{\epsilon_0}(\tilde \mu^{\sigma_0})}
 = \epsilon_{0} 
 \}$, 
 $(\tilde{\mu}^{\sigma_{0}}_{s})_{s \in [0,T]} \in 
F^{\epsilon}(t)$.

Then, Proposition \ref{transitionpoint} says that 
\begin{equation*}
\lim_{\sigma_{0} \rightarrow 0} {\mathbb P} \Bigl( (\tilde{\mu}_{s}^{\sigma_{0}})_{s \in [0,T]}
\in 
F^{\epsilon}(t) \cup (-F^{\epsilon}(t))
\Bigr) = 1. 
\end{equation*}
Since 
$F^{\epsilon}(t) \cup (-F^{\varepsilon}(t))$
is closed, 
we get, 
by the portmanteau theorem, that, for all $t \in [0,\delta/2]$ and $\epsilon >0$, 
\begin{equation*}
{\mathbb P}^{\infty} \bigl( F^{\epsilon}(t) \cup (-F^{\epsilon}(t)) \bigr) = 1.
\end{equation*} 
Intersecting over all the positive and rational reals $\epsilon$, we get:
\begin{equation*}
{\mathbb P}^{\infty} \bigl( F(t) \cup (-F(t)) \bigr) = 1.
\end{equation*} 
Intersecting over all the rational reals $t \in [0,\delta/2]$, we deduce the announced claim.
\vskip 4pt

\textit{Conclusion.} By the first and second steps, the canonical process $(\psi_{t})_{t \in [0,T]}$
on ${\mathcal C}([0,T];{\mathbb R})$ must satisfy
\begin{equation*}
{\mathbb P}^{\infty} \Bigl( 
\{ \psi_{t} =k_t, \quad t \in [0,T] \}
\cup 
\{ \psi_{t} =-k_t, \quad t \in [0,T] \}
\Bigr) = 1.
\end{equation*}
By (A3), $\psi$
and $-\psi$ have the same law under
${\mathbb P}^{\infty}$. We deduce that
\begin{equation*}
{\mathbb P}^{\infty} \bigl(   \psi_{t} =k_t, \quad t \in [0,T]  
\bigr)
=
{\mathbb P}^{\infty} \bigl(   \psi_{t} =-k_t, \quad t \in [0,T] 
\bigr) = \frac12,
\end{equation*}
which completes the proof.
\end{proof}

\section{$N$-player limit selection}
\label{se:N-player}

We now come to the last method of selection. As shown in Subsection \ref{subse:Nplayer_game}, we can indeed associate with our particular LQ-MFG a game with a finite number of players 
and then address the asymptotic form of the equilibria (if any) as the number of players tends to $\infty$. In fact, 
the connection between 
mean-field games and 
games 
with finitely many players is a major question in the theory of mean-field games, see for instance the references \cite{Lions,Cardaliaguet,CarmonaDelarue_book_I,CarmonaDelarue_book_II,Fischer,Lacker_limits,Lacker2}. 

Below, 
we prove that, for a finite number $N$ of players, 
Nash equilibria (if any) solve a forward-backward stochastic particle system. The goal is thus to address the asymptotic form, under the limit $N \rightarrow \infty$, 
of the solution to this particle system and to see which equilibria of the LQ-MFG \eqref{fbsde2:sigma0=0} are charged by the weak limits (if any).

Basically, the main result that we show in this section is that the equilibria that are selected in this way are the same as those 
obtained in the previous section. 

\subsection{The associated $N$-players games}
\label{subse:Nplayer_game}

In this paragraph, we formulate the version with finitely many players of the mean-field game we have been considering so far. 
As already explained in introduction, the fact that the mean-field game has a counterpart in the form of a stochastic differential game 
with a finite number of players is not a big surprise: This connection is pretty standard and, in fact, it 
is the basis of 
the whole theory of mean-field games, see the aforementioned references. 

The striking fact in the game with finitely many players we address below is that each player is driven by its own 
Brownian motion. In other words, noises are independent; they are said to be idiosyncratic. 

So, for the description of the game, we consider an integer $N \in \mathbb{N}^*$, which stands for the number of players in the game. 
Then, for the same time horizon as before, we call $(W^i_t \hspace{2mm}; i =1,...,N)_{t \in [0,T]}$ a collection of 
$N$ independent one dimensional Brownian motions defined on a (common) complete filtered probability space $(\Omega,  {\mathcal{F}}, \mathbb{P})$. We call 
$(\hat{\mathcal{F}_t})_{t\in [0,T]}$ the usual augmentation of the filtration generated by 
$((W^i_t)_{t \in [0,T]} \hspace{2mm}; {i =1,...,N})$.

Also $\sigma >0$ and $\kappa \in \mathbb{R}$ are the same constants as in the system \eqref{constraint2}, and $ g : \mathbb{R} \rightarrow \mathbb{R} $ 
is the Lipschitz continuous and bounded function we defined earlier. Importantly, $\sigma_{0}$ is $0$ in this paragraph: 
There is no common noise; but somehow, we show below that 
there is an \textit{intrinsic common noise} of variance $\sigma_{0}= \sigma N^{-1/2}$ in the system. 
We will insist repeatedly on this fact which will serve us as a guideline. 
\vskip 4pt

\subsubsection*{Formulation of the game}

For all $i=1,...,N$, the evolution of $i^{th}$ player's state during the game is described by the real-valued process $(X^i_t)_{t \in [0,T]}$. 
Noticeably, player $i$ sees the other players through an aggregate quantity, which is here given by the average of the states of 
 all these other players, namely
 $$\mu^{i,N}_t : = \frac{1}{N-1}\sum_{j \neq i}^{N} X^j_t, \hspace{2mm} \forall t \in [0,T].$$
Of course, 
the fact that interactions are designed in such a way is the cornerstone for explaining the mean-field structure 
we addressed in Scheme 
\ref{MFG-problem}. 
\begin{rem}
In some of the articles on the subject, authors include in the definition of the empirical measure the own state of player 
$i$, in which case $\mu^{i,N}_{t}$ becomes independent of $i$ and writes
$$\mu^{N}_t = \frac{1}{N}\sum_{j=1}^{N} X^j_t.$$
As explained in \cite[Chapter 6]{CarmonaDelarue_book_II}, 
the limiting game should be the same. Still, we here work with the first 
form of the empirical measure as it is more convenient for our own purposes. 
\end{rem}

Player $i$ has the following dynamics:
\begin{equation}
\label{constraint0}
\begin{cases}
dX^i_t = [ \kappa X^i_t + \alpha^i_t] dt + \sigma dW^i_t , \hspace{2mm} \forall t \in [0,T],
\\ 
X^i_0 = 0,
\end{cases}
\end{equation}
where $(\alpha_{t}^i)_{t \in [0,T]}$ is a control process 
 belonging to the space 
$\hat{\mathcal{H}^2}$ of $(\hat{\mathcal{F}_t})_{t \in [0,T]}$-progressively measurable processes 
$(\alpha_{t})_{t \in [0,T]}$ 
satisfying $$  \mathbb{E} \bigg[ \int_0^T |\alpha_t|^2 dt \bigg] < \infty.$$   
Given the tuple of controls $(\alpha^1_{t},\cdots,\alpha_{t}^N)_{t \in [0,T]}$, we associate with player 
$i$ the following cost functional:
\begin{align}
\label{costfxn0}
J^i(\alpha^1,\cdots,\alpha^i,\cdots,\alpha^N)  & :=  \mathbb{E} \Bigg[ \int_0^T \frac{1}{2} \bigg[( \alpha^i_t)^2 + \big( X^i_t \big)^2 \bigg] dt + \frac{1}{2} \bigg( X^i_T + g ( \mu^{i,N}_T ) \bigg)^2 \Bigg].
\end{align}
We then recall the following standard definition:

\begin{defi}
We call $(\alpha^1_{t},\cdots,\alpha_{t}^N)_{t \in [0,T]}$ a Nash equilibrium if, for any
$i \in \{1,\cdots,N\}$
and any 
 other 
process $(\beta_{t})_{t \in [0,T]}$,
\begin{equation*}
J^i(\alpha^1,\cdots,\alpha^{i-1},\beta,\alpha^{i+1},\cdots,\alpha^N)
\geq 
J^i(\alpha^1,\cdots,\alpha^{i-1},\alpha^i,\alpha^{i+1},\cdots,\alpha^N).
\end{equation*}
\end{defi}
In other words, a Nash equilibrium is a consensus between the players: None of them can be better off by deviating unilaterally from the consensus. 

It must be emphasized that the definition given above is restricted to so-called equilibria over controls in
 open-loop form: 
When player $i$ changes her/his own strategy, the others keep playing
the same realizations of  
$((\alpha^j_{t})_{t \in [0,T]} \hspace{2mm} ; {j \not = i})$. This is contrast with equilibria over controls in Markovian closed loop form, 
which are addressed in the PDE literature:
Equilibria over controls in Markovian closed loop form are in the form 
$(\bar \alpha^j(t,X^1_{t},\cdots,X^N_{t}))_{t \in [0,T]}$, for $j = 1,\cdots,N$,
for functions $\bar \alpha^j : [0,T] \times {\mathbb R}^N \rightarrow {\mathbb R}$;
whenever player $i$ deviates, she/he chooses another feedback function  
$\bar \beta : [0,T] \times \RR^N \rightarrow \RR$ instead of 
$\bar \alpha^i$ while the others keep using 
$\bar \alpha^j$; still, as the values of the state process $(X^i_{t})_{t \in [0,T]}$ change,
the realizations of the control processes
$((\bar \alpha^j(t,X^1_{t},\cdots,X^N_{t}))_{t \in [0,T]} \hspace{2mm} ;{j \not = i})$
change as well. We refer to \cite[Chapter 2]{CarmonaDelarue_book_I} 
for a review. 

We shall not address the case of equilibria over controls in Markovian closed loop form in the text, but this could make sense as well.

\subsubsection*{First order condition}

Similar to \eqref{fbsde2} (for mean-field games), we can write down a first order condition 
for the Nash equilibria of the game 
\eqref{constraint0}--\eqref{costfxn0}
 in the form of a system of forward-backward stochastic differential equations. 
This is the cornerstone of our selection result. Again, we refer 
to \cite[Chapter 2]{CarmonaDelarue_book_I} 
for details on the derivation of this forward-backward system.

The first order condition writes as follows. Any Nash equilibrium to the associated $N$-player game 
 \eqref{constraint0}--\eqref{costfxn0}
 is in the set of solutions of the following system of forward-backward SDEs: 
\begin{equation}
\label{fbsdeN}
\begin{cases}
dX^{i}_t = \bigl[ \kappa X^{i}_t - Y^{i}_t \bigr]dt + \sigma dW^{i}_t, 
\quad 
\forall t \in [0,T],
\quad X^{i}_0 = 0.  
\vspace{2pt}
\\ 
dY^{i}_t  = \bigl[ -X^{i}_t - \kappa Y^{i}_t \bigr]dt + \sum_{k=1}^{N} Z^{i,k}_t dW^{k}_t, 
\quad \forall t \in [0,T],
\quad Y^{i}_T = X^{i}_T + g(\mu^{i,N}_T),
\vspace{2pt}
\\ 
i \in \{1,2,...,N\}. 
\end{cases}
\end{equation}
In other words, the state processes of any Nash equilibrium must coincide 
with the forward paths of some solution to the above system. 

Below, we do not discuss whether \eqref{fbsdeN}
is a sufficient condition or not. Usually, it is known to be sufficient in the case when 
the coefficients of the cost functional 
\eqref{costfxn0} are convex, but the latter is not true here. 

Following our strategy for solving the system \eqref{fbsde2}, 
we search for a solution $(X^i_t, Y^{i}_t, Z^{i,k}_t \hspace{2mm}; i , k = 1, 2, ..., N)_{t\in[0,T]}$ to the FBSDE (\ref{fbsdeN}) in
the form  $Y^{i}_t = \eta_t X^i_t + V^i_t$ for all $i = 1,2,...,N$ and $t \in [0,T],$ where $$ dV^i_t = \chi^i_t dt  + \sum_{k=1}^{N} z^{i,k}_t dW^k_t, \quad V^i_T = g(\mu^{i,N}_T).$$

In fact, we can show that  there is a solution $(X^i_t, Y^{i}_t, Z^{i,k}_t \hspace{2mm}; i , k = 1, 2, ..., N)_{t\in[0,T]}$ to FBSDE (\ref{fbsdeN}) if and only if we can construct a solution $(X^i_t, V^{i}_t, z^{i,k}_t \hspace{2mm}; i , k = 1, 2, ..., N)_{t\in[0,T]}$ to FBSDE (\ref{fbsdeNext}) below: 
\begin{equation}
\label{fbsdeNext}
\begin{cases}
dX^{i}_t = \bigl[ (\kappa-\eta_t) X^{i}_t - V^{i}_t \bigr]dt + \sigma dW^{i}_t, 
\quad
\forall t \in [0,T],
\quad X^{i}_0 = 0.  
\vspace{2pt}
\\ 
dV^{i}_t  =  - (\kappa-\eta_t) V^{i}_t dt + \sum_{k=1}^{N} z^{i,k}_t dW^{k}_t, 
\quad
\forall t \in [0,T],
\quad V^{i}_T =  g(\mu^{i,N}_T), 
\vspace{2pt}
\\
i \in \{1,2,...,N\}, 
\vspace{5pt} 
\\ 
\frac{ d\eta_t}{dt} =  \eta^2_t -2\kappa \eta_t - 1, 
\quad
\forall t \in [0,T],
\quad \eta_T = 1.
\end{cases}
\end{equation}
The connection between \eqref{fbsdeN} to \eqref{fbsdeNext} 
is given by the change of variable $(Y_{t}^{i} = \eta_{t} X_{t}^i + V_{t}^i \hspace{2mm} ; i =1,...,N)_{t \in [0,T]}$. In fact, by a new (straightforward) change of variable in the forward component, 
we can even remove the $X^i$ dependence in the drift of the forward equation. By \cite{Delarue02},
we deduce that \eqref{fbsdeNext} is uniquely solvable. Hence, 
\eqref{fbsdeN} is uniquely solvable as well as. Although this does not show the existence of a Nash equilibrium to the $N$-player game, this shows that the first order condition is always uniquely satisfied, which suffices for our purposes: Below, we investigate the asymptotic behavior of the solution to 
\eqref{fbsdeN}  as $N$ tends to $+\infty$. 

Recall now that $w_t = \exp( - \int_t^T (\kappa-\eta_s) ds ) $ and define the rescaled average players' states (over all the players): 
$$\tilde \mu^N_t = \frac{w_t^{-1}}{N} \sum_{i=1}^{N} X^i_t, \quad v^N_t = \frac{w_t}{N} \sum_{i=1}^{N} V^i_t, \quad \text{for
 all} \ t \in [0,T]. $$
Therefore, when there exists a Nash equilibrium to the associated $N$-player game, the process $(\tilde \mu_t^N,v_t^N)_{t\in[0,T]}$ is a solution to (use the fact that $w_{T}^{-1}=1$)

\begin{equation}
\label{fbsdeNew}
\begin{cases}
\forall t \in [0,T],
\vspace{2pt}
\\ 
d\tilde \mu^N_t =  -w^{-2}_t v^N_t dt + \frac{\sigma}{N} \sum_{i=1}^{N} w^{-1}_t dW^{i}_t, \quad \tilde \mu^N_0 = 0,  
\vspace{2pt}
\\ 
(v^N_t)_{t\in[0,T]} \quad \text{is a continuous martingale}, \quad v^N_T =  \frac{1}{N} \sum_{i=1}^{N} g(\tilde \mu^{i,N}_T),
\end{cases}
\end{equation}
where
\begin{equation*}
\tilde \mu^{i,N}_t
= \frac{w_{t}^{-1}}{N-1} \sum_{j \not =i} X_{t}^j.
\end{equation*}

\subsubsection*{Main statement}

Here is now our main statement:
\begin{thm}
\label{mainN} 
Consider $\big( k_t := \int^t_0 w_s^{-2}ds \big)_{t \in [0,T]} \in \mathcal{C}([0,T];\mathbb{R})$.
The sequence $({\mathbb P} \circ 
(\tilde \mu^{N}_{t})_{0 \le t \le T}^{-1})_{N \geq 1}$ converges, as $N \rightarrow \infty$, to 
\begin{equation*}
\frac12 \delta_{(k_t)_{t \in [0,T]}}
+\frac12 \delta_{(-k_t)_{t \in [0,T]}}.
\end{equation*}


\end{thm}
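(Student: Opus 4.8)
The plan is to derive Theorem~\ref{mainN} from the abstract result Theorem~\ref{thm:general}, the key point being that the idiosyncratic noises of the $N$-player game aggregate into a single common noise of intensity $\sigma_0 := \sigma N^{-1/2}$. Set $\tilde B^N_t := N^{-1/2}\sum_{i=1}^N W^i_t$; by L\'evy's characterization this is a standard Brownian motion for the filtration $(\hat{\mathcal F}_t)_{t\in[0,T]}$, and the system \eqref{fbsdeNew} rewrites as
\[
d\tilde\mu^N_t = -w_t^{-2} v^N_t\, dt + \sigma_0\, w_t^{-1}\, d\tilde B^N_t,\qquad \tilde\mu^N_0 = 0,
\]
with $v^N$ a continuous $(\hat{\mathcal F}_t)$-martingale and $v^N_T = \tfrac1N\sum_i g(\tilde\mu^{i,N}_T)$. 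Since $\vert g\vert\le 1$ and $v^N_t = \mathbb{E}[v^N_T\mid\hat{\mathcal F}_t]$, the process $v^N$ takes values in $[-1,1]$, so the data $(\tilde\mu^N,v^N)$ have exactly the structure demanded by (A2), with $\tilde\theta^{\sigma_0}$ taken to be the classical solution $\theta^{\sigma_0}$ of the viscous Burgers equation \eqref{PDE} at viscosity $\sigma_0^2$. With this choice (A1) is precisely Proposition~\ref{propPSI}; (A3) follows from the uniqueness of the solution to the first-order system \eqref{fbsdeN} (established via \eqref{fbsdeNext}) together with the fact that $g$ is odd, as replacing $(W^i)_i$ by $(-W^i)_i$ maps $(X^i,Y^i,Z^{i,k})_i$ to $(-X^i,-Y^i,Z^{i,k})_i$, hence $\tilde\mu^N$ to $-\tilde\mu^N$. (Although Theorem~\ref{thm:general} is phrased for all $\sigma_0\in(0,1)$, its proof uses only the limit $\sigma_0\to 0$, so it applies verbatim along the sequence $\sigma_0=\sigma N^{-1/2}$.)

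What remains is to check (A2), namely that the error process $\triangle^N_t := \theta^{\sigma_0}(t,\tilde\mu^N_t) - v^N_t$ satisfies $\sup_{t\in[0,T]}\vert\triangle^N_t\vert\to 0$ in probability. First I would record uniform a priori bounds: from \eqref{fbsdeNext}, $V^i$ solves a linear BSDE with bounded terminal datum $g(\mu^{i,N}_T)$, so $\sup_t\vert V^i_t\vert\le\exp(\int_0^T\vert\kappa-\eta_s\vert\,ds)$, and feeding this into the forward equation yields $\sup_i\mathbb{E}[\sup_{t\le T}\vert X^i_t\vert^2]\le C$ uniformly in $N$. Exchangeability then gives $\tilde\mu^N_T = \tfrac1N\sum_i\tilde\mu^{i,N}_T$ and $\mathbb{E}\vert\tilde\mu^{i,N}_T-\tilde\mu^N_T\vert^2\le C/N^2$, whence (using that $g$ is Lipschitz and $\vert\triangle^N_T\vert\le 2$) the terminal error $\triangle^N_T = g(\tilde\mu^N_T)-\tfrac1N\sum_i g(\tilde\mu^{i,N}_T)$ obeys $\mathbb{E}[(\triangle^N_T)^2]\le C/N^2\to 0$. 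Next, applying It\^o's formula to $\theta^{\sigma_0}(t,\tilde\mu^N_t)$ and using the PDE \eqref{PDE}, one finds that $\triangle^N$ solves the linear equation
\[
d\triangle^N_t = w_t^{-2}\,\partial_x\theta^{\sigma_0}(t,\tilde\mu^N_t)\,\triangle^N_t\, dt + dM^N_t,
\]
where $M^N$ is an $(\hat{\mathcal F}_t)$-martingale (its $\tilde B^N$-integrand being $\sigma_0 w_t^{-1}\partial_x\theta^{\sigma_0}(t,\tilde\mu^N_t)$ minus the martingale part of $v^N$), and where the zeroth-order coefficient $w_t^{-2}\partial_x\theta^{\sigma_0}(t,\tilde\mu^N_t)$ is non-positive by Lemma~\ref{lem:non:decreasing}.

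The main obstacle is precisely the backward propagation of this terminal smallness. A blunt Gronwall or standard BSDE estimate only gives $\mathbb{E}[\sup_t(\triangle^N_t)^2]\le CN^{-2}\exp(C r_0/\sigma_0^2)=CN^{-2}\exp(C' N/\sigma^2)$, which diverges, because the Lipschitz constant of $\theta^{\sigma_0}$ in space is of order $\sigma_0^{-2}=N/\sigma^2$ near the singularity (Lemma~\ref{lem:gradient}) --- the analytic trace of the ill-posedness of the limiting problem. The way around it, which I expect to be the technical heart of the section, is to avoid propagating the error globally and instead localize to the region where $\tilde\mu^N$ has already escaped the singularity, where $\theta^{\sigma_0}$ is close to the constant $-\mathrm{sign}(\cdot)$ by (A1): the transition-point estimate Proposition~\ref{transitionpoint} is available unconditionally on (A1)--(A2), its proof using only the SDE form of $\tilde\mu^N$ and $\vert v^N\vert\le 1$, so $\tilde\mu^N$ reaches $\pm\epsilon_0$ within a time of order $\sigma_0$ with probability tending to $1$; on that escape event $\tilde\mu^N_T$ --- and, with high probability, all $\tilde\mu^{i,N}_T$ --- lie in a common region where $g$ is affine or constant, which forces $\triangle^N_T=0$ and, since $v^N$ is a bounded martingale with $v^N_T=-\mathrm{sign}(\tilde\mu^N_T)$ there, makes $v^N_t$ close to $\theta^{\sigma_0}(t,\tilde\mu^N_t)$ on the whole escape interval. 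The delicate point is that the escape event and $v^N$ are coupled, so making this rigorous requires a bootstrapping argument --- or, equivalently, re-running the transition-point and restarting arguments of Subsections~\ref{subse:transition:point}--\ref{subse:restart} directly on $(\tilde\mu^N,v^N)$, using only the closeness of $v^N$ and $\theta^{\sigma_0}(\cdot,\tilde\mu^N)$ in the escape region (exactly as much as the proof of Proposition~\ref{boundasymp} actually needs). Once this substitute for (A2) is in force, Theorem~\ref{thm:general}, or its proof applied line by line, yields the convergence of $\mathbb{P}\circ(\tilde\mu^N_t)_{0\le t\le T}^{-1}$ to $\tfrac12\delta_{(k_t)_{t\in[0,T]}}+\tfrac12\delta_{(-k_t)_{t\in[0,T]}}$.
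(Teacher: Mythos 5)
Your overall architecture coincides with the paper's: aggregate the idiosyncratic noises into $B_t = N^{-1/2}\sum_i W^i_t$ so that \eqref{fbsdeNew} becomes a copy of \eqref{fbsde2} with $\sigma_0=\sigma N^{-1/2}$ up to the mismatch in the terminal condition, then feed everything into Theorem \ref{thm:general}. Your verification of (A3), your observation that Proposition \ref{transitionpoint} only needs $\vert v^N\vert\le 1$, and your diagnosis that a Gronwall/BSDE estimate on $\triangle^N_t=\theta^{\sigma_0}(t,\tilde\mu^N_t)-v^N_t$ is destroyed by the factor $\exp(CN)$ coming from Lemma \ref{lem:gradient} are all correct. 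But the step you defer to a ``bootstrapping argument'' is precisely the heart of the section, and as sketched it is circular: to run the restarting argument of Proposition \ref{boundasymp} you need $v^N_t\le -(1-\gamma/2)$ along the escape interval, which you propose to extract from $v^N_t=\E[v^N_T\mid\hat{\mathcal F}_t]$ together with $v^N_T=-1$ on the event of continued escape --- but the conditional probability of continued escape is exactly what that drift bound is supposed to produce. Nothing in your sketch breaks this loop. Note also that a pathwise bound $\vert\triangle^N_T\vert\le C/\sqrt N$ (which does hold on the good event, $g$ being $1/r_\delta$-Lipschitz) is of no use either, since the backward weight is still of size $e^{CN}$.

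The paper breaks the circularity by a one-sided comparison that never attempts to control the magnitude of the backward propagation. It replaces $g$ by a slightly larger terminal condition $\tilde g^N\ge g$, obtained by shifting $g$ by $\gamma_N$ with $\gamma_N\sqrt N\to\infty$ near the kink at $r_\delta$ (see \eqref{eq:tilde:g}), and proves the pathwise inequality $v^N_T=\frac1N\sum_i g(\tilde\mu^{i,N}_T)\le\tilde g^N(\tilde\mu^N_T)$ on an event $A_N$ defined only through the Brownian motions --- via Jensen's inequality on the region where $g$ is concave, and monotonicity of $g$ near the kink. The linear equation you wrote for the discrepancy, whose zeroth-order coefficient is non-positive, then propagates the \emph{sign} of this inequality backward: the exponential weight is huge but positive, so the one-sided bound $v^N_t\le\tilde\theta^N(t,\tilde\mu^N_t)-\triangle^N_t$ survives (Lemma \ref{le:0}), and the symmetry (A3) supplies the other side. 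Finally, (A1) must then be checked for $\tilde\theta^N$ rather than for $\theta^N$: this is Lemma \ref{le:2}, which converts the $L^1$-stability estimate $\int\vert\tilde\theta^N-\theta^N\vert\le 2\gamma_N^2/r_\delta$ of Lemma \ref{comparePDEs} into a pointwise bound using monotonicity in $x$, before invoking Proposition \ref{propPSI}. These three ingredients --- the modified terminal condition, the sign-only backward propagation, and the $L^1$-to-pointwise conversion --- are what your proposal is missing, and without them the verification of (A2) does not go through.
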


Of course, it says that the equilibria that are selected in 
Proposition 
\ref{prop:sol:sigma0=0}
are the same as those selected in Theorem \ref{mainzeronoise}. 
\subsection{Approximate decoupling field}

The problem with \eqref{fbsdeNew} is that the terminal condition is not
in the form of a function of $\mu^N_{T}$. Still, what we expect is that 
the solution to \eqref{fbsdeNew} should get closer and closer (as $N$ tends to $\infty$) to the solution of the 
same system but with the terminal boundary condition
\begin{equation*}
v^N_T =  \frac{1}{N} \sum_{i=1}^{N} g(\tilde \mu^{N}_T) = g(\tilde \mu^N_{T}),
\end{equation*}
namely (we put a hat on the symbols to distinguish from \eqref{fbsdeNew})
\begin{equation}
\label{fbsdeNew:approx}
\begin{cases}
\forall t \in [0,T],
\vspace{2pt}
\\ 
d\hat \mu^N_t =  -w^{-2}_t \hat v^N_t dt + \frac{\sigma}{N} \sum_{i=1}^{N} w^{-1}_t dW^{i}_t, \quad \hat \mu^N_0 = 0,  
\vspace{2pt}
\\ 
(\hat v^N_t)_{t\in[0,T]} \quad \text{is a continuous martingale}, \quad \hat v^N_T =  g(\hat \mu^{N}_T).
\end{cases}
\end{equation}
Letting
\begin{equation}
\label{eq:B:sumi:Wi}
 B_{t} = \frac{1}{\sqrt{N}} \sum_{i=1}^N W_{t}^i, \quad t \in [0,T],
\end{equation}
we then recover \eqref{fbsde2} and  $(B_{t})_{t \in [0,T]}$, which is a Brownian motion with respect to $(\hat{\mathcal{F}_t})_{t\in [0,T]}$, plays the role of a common noise with intensity 
$\sigma_{0}=\sigma N^{-1/2}$! (This explains why we require $\sigma >0$ in this section.) 

However, this does not work so easily since \eqref{fbsdeNew:approx}
and 
\eqref{fbsdeNew} do not coincide. So, we pay some price below to estimate the distance between the solutions 
of the two systems. 

\subsubsection*{Comparison argument}

The main difficulty to compare \eqref{fbsdeNew:approx}
and 
\eqref{fbsdeNew} is the fact that, as $N$ tends to $\infty$, the system \eqref{fbsdeNew:approx} becomes ill-posed. So, we cannot expect for 
robust stability properties, uniformly in the parameter $N$, to estimate the difference between the solutions of the two equations.  

The strategy we use below is based upon a comparison principle. As shown by 
Lemma \ref{comparePDEs}, a form of comparison should be indeed in force independently of the value of 
$N$. 
\vskip 5pt

In order to put things in order, we recall  that $r_\delta = \int_\delta^T w_s^{-2}ds$ and we consider a sequence of positive real numbers $(\gamma_N)_{N \geq 1} \subseteq (0, r_\delta / 2)$ such that  $\gamma_N \rightarrow 0$
as $N \rightarrow +\infty $. We then define the Lipschitz continuous non-increasing function 
\begin{equation}
\label{eq:tilde:g}
\tilde{g}^N(x) : = 
\begin{cases}
g(x), \quad   &\text{if} \quad  x \leq r_\delta - 2\gamma_N,
\vspace{2pt} \\
g(r_\delta - 2\gamma_N) \quad   &\text{if} \quad r_\delta- 2 \gamma_N \leq x \leq r_\delta-\gamma_N,
\vspace{2pt} \\
g(x-\gamma_N) \quad   &\text{if} \quad  r_\delta - \gamma_N \leq x \leq r_\delta + \gamma_N,
\vspace{2pt} \\
g(x) = g(r_{\delta}) \quad   &\text{if} \quad  x \geq  r_\delta+\gamma_N.
\end{cases}
\end{equation}
We then have the following lemma:

\begin{lem}
The functions
$g$ and  
$\tilde{g}^N$ satisfy
$$ \tilde{g}^N \geq g \quad \text{and} \quad \int_{-\infty}^{+\infty} (\tilde{g}^N - g)(x) dx = 2 \frac{\gamma^2_N}{r_{\delta}}.$$
\end{lem}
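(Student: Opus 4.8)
The plan is to treat the statement as a direct verification, since $\tilde g^N$ is an explicit piecewise-affine perturbation of the piecewise-affine function $g$ of \eqref{eq:g}. First I would record the local shape of $g$ near $r_\delta$: because $\gamma_N \in (0, r_\delta/2)$, all four break-points satisfy $0 < r_\delta - 2\gamma_N < r_\delta - \gamma_N < r_\delta < r_\delta + \gamma_N$, and $g$ is affine with slope $-1/r_\delta$ on $(0, r_\delta]$ and constant equal to $-1$ on $[r_\delta, \infty)$; in particular $g(r_\delta - 2\gamma_N) = -1 + 2\gamma_N/r_\delta$ and $g(r_\delta) = g(r_\delta + \gamma_N) = -1$. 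Plugging these values into \eqref{eq:tilde:g} shows that the four affine pieces defining $\tilde g^N$ match at the junction points $r_\delta - 2\gamma_N$, $r_\delta - \gamma_N$ and $r_\delta + \gamma_N$, so $\tilde g^N$ is continuous and piecewise affine, hence Lipschitz continuous, and, each piece being non-increasing, $\tilde g^N$ is non-increasing.

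For the inequality $\tilde g^N \ge g$ I would argue region by region: on $(-\infty, r_\delta - 2\gamma_N]$ and on $[r_\delta + \gamma_N, \infty)$ the two functions coincide; on $[r_\delta - 2\gamma_N, r_\delta - \gamma_N]$ one has $\tilde g^N(x) = g(r_\delta - 2\gamma_N) \ge g(x)$ since $g$ is non-increasing and $r_\delta - 2\gamma_N \le x$; on $[r_\delta - \gamma_N, r_\delta + \gamma_N]$ one has $\tilde g^N(x) = g(x - \gamma_N) \ge g(x)$ for the same reason. Hence $\tilde g^N - g \ge 0$ everywhere and $\tilde g^N - g$ is supported in $[r_\delta - 2\gamma_N, r_\delta + \gamma_N]$.

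It then remains to compute $\int_{\RR}(\tilde g^N - g)$, which I would do by integrating the affine function $\tilde g^N - g$ over the three sub-intervals $[r_\delta - 2\gamma_N, r_\delta - \gamma_N]$, $[r_\delta - \gamma_N, r_\delta]$ and $[r_\delta, r_\delta + \gamma_N]$: using the explicit forms of $g$ recalled above, $\tilde g^N - g$ rises affinely from $0$ to $\gamma_N/r_\delta$ on the first interval, is constant equal to $\gamma_N/r_\delta$ on the second, and decreases affinely from $\gamma_N/r_\delta$ to $0$ on the third, giving respectively $\frac{\gamma_N^2}{2 r_\delta}$, $\frac{\gamma_N^2}{r_\delta}$ and $\frac{\gamma_N^2}{2 r_\delta}$, whose sum is $2\gamma_N^2/r_\delta$. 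Equivalently one may invoke the layer-cake identity: for $y$ in the only relevant range $(-1, -1 + 2\gamma_N/r_\delta)$, the level sets $\{x : g(x) \le y\}$ and $\{x : \tilde g^N(x) \ge y\}$ are complementary half-lines whose intersection has length exactly $\gamma_N$ (because $g$ has slope $-1/r_\delta$ there and $\tilde g^N$ equals $g(\cdot - \gamma_N)$ on the relevant branch), and multiplying by the length $2\gamma_N/r_\delta$ of the $y$-range gives the same value. There is no genuine obstacle here; the only point requiring a little care is keeping track of which affine branch of $g$ is active on each sub-interval, which is precisely what the hypothesis $\gamma_N \in (0, r_\delta/2)$ guarantees.
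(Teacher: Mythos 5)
Your proof is correct and follows essentially the same route as the paper's: the inequality comes from the monotonicity of $g$ applied branch by branch, and the integral is computed by splitting $[r_\delta-2\gamma_N,\,r_\delta+\gamma_N]$ into the same three sub-intervals, yielding $\frac{\gamma_N^2}{2r_\delta}+\frac{\gamma_N^2}{r_\delta}+\frac{\gamma_N^2}{2r_\delta}=2\gamma_N^2/r_\delta$. The continuity check at the junction points and the layer-cake alternative are harmless extras not present in the paper's argument.
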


\begin{proof}
The inequality $\tilde{g}^N \geq g$ is a consequence of the fact that $g$ is non-increasing. As for the second part of 
the statement, we have
\begin{equation*}
\begin{split}
 \int_{-\infty}^{+\infty} (\tilde{g}^N - g)(x) dx
 &= \int_{r_{\delta} - 2\gamma_{N}}^{r_{\delta}+ \gamma_{N}}
 (\tilde{g}^N - g )(x) dx
 \\
 &= \int_{r_{\delta} - 2\gamma_{N}}^{r_{\delta} - \gamma_{N}} 
  \frac{x - r_{\delta}+2\gamma_{N}}{r_{\delta}}  dx
 +
  \int_{r_{\delta}-\gamma_{N}}^{r_{\delta}}
  \frac{\gamma_{N}}{r_{\delta}}  dx
+
 \int_{r_{\delta}}^{r_{\delta}+ \gamma_{N}}
  \frac{r_{\delta}-x + \gamma_{N}}{r_{\delta}}  dx
  \\
&  = 2 \frac{\gamma^2_N}{r_{\delta}}.
\end{split}
\end{equation*}
\end{proof}

Following 
\eqref{PDE}, we can associate with 
$g$ and $\tilde{g}^N$
the functions $ \theta^N$ and $\tilde{\theta}^N$ in $\mathcal{C}^{1,2}([0,T); \mathbb{R}) \cap {\mathcal C}([0,T] \times {\mathbb R};{\mathbb R})$, classical solutions to 
\begin{align}
\label{PDE3}
\begin{cases}
\partial_t \tilde{\theta}^{N}_t - w^{-2}_t \tilde{\theta}^{N} \partial_x \tilde{\theta}^{N} + \frac{1}{2} (\frac{\sigma^2}{N}) w^{-2}_t \partial^{2}_{xx}\tilde{\theta}^{N}  = 0, \hspace{3mm} &\forall t \in [0,T), \hspace{2mm} \forall x \in \mathbb{R}, 
\\
\partial_t \theta^{N}_t - w^{-2}_t \theta^{N} \partial_x \theta^{N} + \frac{1}{2} (\frac{\sigma^2}{N}) w^{-2}_t \partial^{2}_{xx} \theta^{N}  = 0, \hspace{3mm} &\forall t \in [0,T), \hspace{2mm} \forall x \in \mathbb{R},
\\
\tilde{\theta}^{N}(T, x) = \tilde{g}^N(x), \hspace{2mm} \theta^{N}(T, x)  = g(x), \hspace{2mm} &\forall x \in \mathbb{R}. 
\end{cases}
\end{align}
Then, thanks to Lemma \ref{comparePDEs}, we have
$\tilde \theta^N \geq \theta^N$ and
\begin{equation}
\label{eq:2612:1}
0 \leq  \int_{-\infty}^{+\infty} (\tilde{\theta}^{N} - \theta^{N})(t,x) dx \leq  \int_{-\infty}^{+\infty} (\tilde{g}^N - g)(t,x) dx = 
\frac{2 \gamma^2_N}{r_{\delta}}, \quad \textrm{\rm for all} \quad t \in [0,T].
\end{equation}

\subsubsection*{Comparison between $v^N$ and $\tilde \theta^N$}

Recall the process $(k_t = \int_0^t w_s^{-2} ds)_{t \in [0,T]}$. Here is our main comparison result:

\begin{lem}
\label{le:0}
Choose $(\gamma_{N})_{N \geq 1}$ 
in
\eqref{eq:tilde:g}
such that
 $${\gamma_{N}} \sqrt{N} \rightarrow +\infty \quad \text{as} \quad N \rightarrow +\infty.$$ 
 Then, with probability 1,
$$ -v^N_t \geq -\tilde{\theta}^N(t, \mu^N_t) +
\triangle^N_{t},  \quad \text{for all} \quad t \in [0,T], $$
where 
\begin{equation*}
\lim_{N \rightarrow +\infty} {\mathbb E} \Bigl[ \sup_{t \in [0,T]}
\bigl\vert \triangle^N_{t}
\bigr\vert^2 \Bigr] =0.
\end{equation*} 
\end{lem}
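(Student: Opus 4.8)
Throughout, write $D_t:=\tilde{\theta}^N(t,\mu^N_t)-v^N_t$ for $t\in[0,T]$, where $\mu^N$ is the forward process of \eqref{fbsdeNew} (recall $w_T=1$, so $D_T=\tilde g^N(\mu^N_T)-\tfrac1N\sum_{i=1}^N g(\tilde\mu^{i,N}_T)$). The lemma is equivalent to $\mathbb E\big[\sup_{t\le T}\big((D_t)_-\big)^2\big]\to0$ as $N\to\infty$, after which one takes $\triangle^N_t:=D_t\wedge 0$. The plan is to derive a linear backward equation for $D$ and to exploit its monotone structure. Applying It\^o's formula to $\tilde\theta^N(t,\mu^N_t)$ (recall $\tilde\theta^N\in\mathcal C^{1,2}([0,T)\times\mathbb R)\cap\mathcal C([0,T]\times\mathbb R)$ and $d\langle\mu^N\rangle_t=(\sigma^2/N)w_t^{-2}dt$), using \eqref{PDE3} to cancel the $\partial_t$ and $\partial^2_{xx}$ terms, and using that $v^N$ is a martingale, one obtains
\begin{equation*}
dD_t=\lambda_t D_t\,dt+d\mathcal M_t,\qquad \lambda_t:=w_t^{-2}\,\partial_x\tilde\theta^N(t,\mu^N_t),
\end{equation*}
with $\mathcal M$ a (for fixed $N$, genuine) martingale. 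The key point is $\lambda_t\le 0$: since $\tilde g^N$ is non-increasing, so is $\tilde\theta^N$ in space, by Lemma \ref{lem:non:decreasing} applied to \eqref{PDE3}. Consequently $t\mapsto D_t\exp(-\int_0^t\lambda_s\,ds)$ is a martingale, hence
\begin{equation*}
D_t=\mathbb E\Big[\,D_T\,\exp\big(-\textstyle\int_t^T\lambda_s\,ds\big)\,\Big|\,\mathcal F_t\Big],\qquad 1\le \exp\big(-\textstyle\int_t^T\lambda_s\,ds\big)\le e^{CN},
\end{equation*}
where $C$ does \emph{not} depend on $N$: by (the proof of) Lemma \ref{lem:gradient}, applied with $\sigma_0=\sigma/\sqrt N$ to the terminal datum $\tilde g^N$, which is $1/r_\delta$-Lipschitz uniformly in $N$, one has $|\partial_x\tilde\theta^N|\le C'N/\sigma^2$ on $[0,T)\times\mathbb R$.

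\textbf{Sign of the terminal value on a good event.} Set $\rho_N:=\max_{1\le i\le N}|\tilde\mu^{i,N}_T-\mu^N_T|=\tfrac1{N-1}\max_i|\mu^N_T-X^i_T|$ and $\Omega_N:=\{\rho_N\le\gamma_N\}$; note $\tfrac1N\sum_i\tilde\mu^{i,N}_T=\mu^N_T$. On $\Omega_N$ all the points $\tilde\mu^{i,N}_T$ lie in $[\mu^N_T-\gamma_N,\mu^N_T+\gamma_N]$, and a short case analysis using the piecewise-affine shape of $g$ and the construction \eqref{eq:tilde:g} of $\tilde g^N$ gives $D_T\ge0$: if $|\mu^N_T|<r_\delta-\rho_N$ the whole cloud sits where $g$ is affine, so $\tfrac1N\sum_i g(\tilde\mu^{i,N}_T)=g(\mu^N_T)\le\tilde g^N(\mu^N_T)$; if $\mu^N_T\le-(r_\delta-\rho_N)$ the cloud sits where $g$ is concave, so $\tfrac1N\sum_i g(\tilde\mu^{i,N}_T)\le g(\mu^N_T)\le\tilde g^N(\mu^N_T)$; and if $\mu^N_T\ge r_\delta-\rho_N$ then $\tfrac1N\sum_i g(\tilde\mu^{i,N}_T)\le g(\min_i\tilde\mu^{i,N}_T)\le g(\mu^N_T-\gamma_N)=\tilde g^N(\mu^N_T)$, the last identity being exactly the $\gamma_N$-shift in \eqref{eq:tilde:g}. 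Since $|D_T|\le2$ always, the representation above yields, for every $t\in[0,T]$,
\begin{equation*}
(D_t)_-\le 2e^{CN}\,\mathbb E\big[\mathbf 1_{\Omega_N^c}\,\big|\,\mathcal F_t\big],
\end{equation*}
and then, by Doob's $L^2$ maximal inequality, $\mathbb E\big[\sup_{t\le T}((D_t)_-)^2\big]\le 16\,e^{2CN}\,\mathbb P(\Omega_N^c)$.

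\textbf{Beating the blow-up.} It remains to show $e^{2CN}\mathbb P(\Omega_N^c)\to0$, and this is exactly where $\gamma_N\sqrt N\to+\infty$ enters. From \eqref{fbsdeNext} one writes $X^i_T-\mu^N_T=\sigma\,(G_i-\bar G)+R^N_i$, where $G_i=\int_0^T e^{\int_s^T(\kappa-\eta_r)dr}dW^i_s$ are i.i.d.\ centred Gaussians with an $N$-independent variance, $\bar G=\tfrac1N\sum_j G_j$, and $|R^N_i|$ is bounded by an $N$-independent constant because $\sup_i\|V^i\|_\infty\le e^{\int_0^T|\kappa-\eta_s|ds}$ (solve the linear backward ODE for $V^i$). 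A union bound then gives, for $N$ large, a Gaussian tail $\mathbb P(\Omega_N^c)=\mathbb P\big(\max_i|\mu^N_T-X^i_T|>(N-1)\gamma_N\big)\le 2N\exp(-cN^2\gamma_N^2)$ with $c>0$ independent of $N$. Since $N\gamma_N^2=(\gamma_N\sqrt N)^2\to+\infty$, we get $e^{2CN}\mathbb P(\Omega_N^c)\le 2N\exp\big(-N(cN\gamma_N^2-2C)\big)\to0$, which finishes the proof with $\triangle^N_t:=D_t\wedge0$.

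\noindent\textbf{Main obstacle.} The delicate point is that as $N\to\infty$ the PDE \eqref{PDE3} degenerates, $|\partial_x\tilde\theta^N|$ may be as large as $O(N)$, and the integrating factor in the backward equation for $D$ is as large as $e^{CN}$; no $N$-uniform stability is available. The way around it is to use only the \emph{sign} $\lambda_t\le0$ --- harmless wherever the terminal inequality $D_T\ge0$ holds --- and to pay for the bad event $\Omega_N^c$ with an estimate that is super-exponentially small in $N$; the smoothing scale $\gamma_N$ must satisfy $\gamma_N\sqrt N\to\infty$ precisely so that this smallness beats the $e^{2CN}$ weight. A secondary subtlety is obtaining $D_T\ge0$ on $\Omega_N$, which genuinely uses the piecewise-linear shape of $g$ (so that averaging over the cloud is exact in the linear region and favourable near $-r_\delta$), not merely Lipschitz continuity.
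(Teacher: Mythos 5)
Your proof is correct and follows essentially the same route as the paper's: It\^o's formula turns $\tilde\theta^N(t,\tilde\mu^N_t)-v^N_t$ into a linear backward representation whose integrating factor is controlled by the gradient bound $\vert\partial_x\tilde\theta^N\vert\leq CN$ together with the sign $\partial_x\tilde\theta^N\leq 0$, the terminal inequality is obtained on a good event via the concavity/monotonicity of $g$ and the $\gamma_N$-shift in $\tilde g^N$, and the bad event is paid for by a Gaussian tail that beats $e^{CN}$. The only (cosmetic) difference is that the paper defines the good event through $\max_i\vert\int_0^Tw_s^{-1}dW^i_s\vert\leq C\sqrt N$ and tunes the constant $C$ to kill the exponential weight, whereas you define it directly through $\max_i\vert\tilde\mu^{i,N}_T-\tilde\mu^N_T\vert\leq\gamma_N$ and let the divergence of $N\gamma_N^2=(\gamma_N\sqrt N)^2$ do the job.
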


\begin{proof}
\textit{First step.}
For all $i = 1,2,...,N$, we can quantify the distance between $\tilde \mu^N_T$ and $\tilde \mu^{i,N}_T$ as follows: 
\begin{equation*}
|\tilde \mu^N_T - \tilde \mu^{i,N}_T| = \frac{1}{N} | \tilde X^i_T - \tilde \mu^{i,N}_T |,
\end{equation*}
where $(\tilde X_{t}^i := w_{t}^{-1} X_{t}^i,\tilde V_{t}^i := w_{t} V_{t}^i)_{t \in [0,T]}$ solves
\begin{equation*}
d \tilde{X}_{t}^i = - w_{t}^{-2} \tilde{V}_{t}^i dt + \sigma w_{t}^{-1} dW_{t}^i. 
\end{equation*}
Therefore,
\begin{equation*}
d \bigl( \tilde{X}_{t}^i - \tilde{\mu}^{i,N}_{t} \bigr) 
= - w_{t}^{-2} \Bigl( \tilde{V}_{t}^i - \frac1{N-1} \sum_{j \not = i}\tilde{V}_{t}^j \Bigr) dt + \sigma 
\Bigl( 
w_{t}^{-1} dW_{t}^i -  \frac{1}{N-1} \sum_{j \not= i} 
w_{t}^{-1} dW_{t}^j 
\Bigr). 
\end{equation*}
Obviously, $(\tilde V_{t}^i)_{t \in [0,T]}$ is a martingale with a terminal boundary condition that belongs 
to $[-1,1]$. Therefore, 
\begin{align*}
|\tilde \mu^N_T - \tilde \mu^{i,N}_T| & \leq \frac{1}{N} \Bigg[ 2 k_T + \sigma \bigg( \bigg| \int_0^T w_s^{-1} dW^i_s \bigg| + \frac{1}{N-1} \sum_{j\neq i}^{N} \bigg| \int_0^T w_s^{-1} dW^j_s \bigg| \bigg) \Bigg] \notag \\
& \leq \frac{2}{N} \Bigg[ k_T + \sigma \max_{i \in \{ 1,...,N \}} \bigg| \int_0^T w_s^{-1} dW^i_s \bigg|  \Bigg]. \notag \\
\end{align*}

\textit{Second step.}
By a straightforward application of It\^o's formula, we obtain 
\begin{equation*}
d\bigl( v^N_{t} - 
 \tilde{\theta}^N(t, \tilde \mu^N_t)
 \bigr) =  w_{t}^{-2} \partial_{x} \tilde \theta^N(t,\tilde \mu_{t}^N) 
 \bigl( 
 v^N_{t}
 - 
\tilde{\theta}^N(t, \tilde \mu^N_t)
 \bigr) 
 dt+
dM_{t},
\end{equation*}
where $(M_{t})_{t \in [0,T]}$ is a square integrable martingale. 
Therefore,
$$ v^N_t - \tilde{\theta}^N(t, \tilde \mu^N_t) = \mathbb{E} \Bigg[ \exp\bigg( -\int_t^T w_s^{-2} \partial_x \tilde \theta^N(s,\tilde \mu^N_s) ds \bigg) \bigg[v^N_T - \tilde{g}^N(\tilde \mu^N_T) \bigg] \Bigg| \hat{\mathcal{F}_t} \Bigg] \quad \text{for all} \quad t \in [0,T]. $$
Now, following
Lemma \ref{lem:gradient}, we know that there exists a positive constant $C$, independent of $N$, such that $$ |\partial_x \tilde{\theta}^N(s,\tilde \mu^N_s)| \leq C N \quad \text{for all} \quad s \in [0,T).$$
Therefore, by letting $A_N := \lbrace \max_{i \in \{ 1,...,N \}} | \int_0^T w_s^{-1} dW^i_s  | \leq C \sqrt{N} \rbrace$, we get
\begin{equation}
\label{vbound}
\begin{split}
 -v^N_t &\geq -\tilde{\theta}^N(t, \tilde \mu^N_t) + \triangle_t^N 
 \\
&\hspace{10pt} + \mathbb{E} \Bigg[
  \exp\bigg( -\int_t^T w_s^{-2} \partial_x \tilde \theta^N(s,\tilde \mu^N_s) ds \bigg) 
 \big[ \tilde{g}^N(\tilde \mu^N_T) - v^N_T \big] \mathbf{1}_{A_N} \Bigg| \hat{\mathcal{F}_t} \Bigg],
 \end{split}
\end{equation}
for all $t \in [0,T]$, 
with 
\begin{align}
|\triangle_t^N | & \leq \mathbb{E} \Bigg[ \exp(CN) \big\vert v^N_T - \tilde{g}^N(\tilde \mu^N_T) \big\vert \mathbf{1}_{A_N^{\complement}} \Bigg| \hat{\mathcal{F}_t} \Bigg] \notag \\
& \leq 2 \exp(CN) \mathbb{P} \biggl( \max_{i \in \{ 1,...,N \}} \biggl| \int_0^T w_s^{-1} dW^i_s  \biggr| > C \sqrt{N} 
\bigg|  \hat{\mathcal{F}_t} \biggr).
 \notag 
\end{align}
By standard Gaussian estimates, we notice that, for a constant $c$ depending on $k_{T}$, but independent of $C$,
\begin{align*} 
\max_{i \in \{ 1,...,N \}} \mathbb{P} \biggl(  \biggl| \int_0^T w_s^{-1} dW^i_s  \biggr| > C \sqrt{N} 
 \biggr) & \leq \frac1{c} \exp \bigl( - \frac{C^2 N}{c} \bigr),
\end{align*}
and then
\begin{align*} 
\mathbb{P} \biggl( \max_{i \in \{ 1,...,N \}}   \biggl| \int_0^T w_s^{-1} dW^i_s  \biggr| > C \sqrt{N} 
 \biggr) & \leq \frac{N}{c} \exp \bigl( - \frac{C^2 N}{c} \bigr).
\end{align*}
Hence,  
for any $p \geq 1$   
\begin{align*} 
\exp( C p N) \, \mathbb{P} \biggl( \max_{i \in \{ 1,...,N \}} \biggl| \int_0^T w_s^{-1} dW^i_s  \biggr| > C \sqrt{N} 
 \biggr) &\leq \frac{N}{c} \exp\bigg( CN \bigg( p - \frac{C}{c} \bigg) \bigg).
\end{align*}
Choosing $C$ large enough, we deduce from 
Doob's inequality that 
\begin{equation*}
\lim_{N \rightarrow +\infty}
{\mathbb E}
\Bigl[ \sup_{t \in [0,T]} \vert \triangle^N_{t} \vert^2
\Bigr] =0.
\end{equation*}

\textit{Third step.}
From inequality (\ref{vbound}) above, it is sufficient to show that $\tilde{g}^N(\tilde \mu^N_T) -v^N_T  \geq 0$
on the event $A_{N}$.

We thus return to the conclusion of the first step and we notice that, on the event $A_N$, for all $i=1,2,...,N$,
$$  |\tilde \mu^N_T - \tilde \mu^{i,N}_T|  \leq \frac{2}{N} \Bigg[ k_T + \sigma \max_{i \in \{ 1,...,N \}} \bigg| \int_0^T w_s^{-1} dW^i_s \bigg|  \Bigg] \leq \frac{c C}{\sqrt{N}},$$
for a new value of the constant $c$. 

Recall now from 
\eqref{eq:tilde:g} that $\gamma_N \in (0, r_\delta/2)$. 
\vspace{4pt}

\textit{Case a.}
Suppose that $ \tilde \mu^N_T < r_\delta - \frac{cC}{\sqrt{N}}$. Then, $\tilde \mu^{i,N}_T  < r_\delta $ (for all $i \in \{1,\cdots,N\}$) and by concavity of $g$ for values less than or equal to $r_\delta$, we obtain
$$ v^N_T = \frac{1}{N} \sum_{i=1}^{N} g(\tilde \mu^{i,N}_T) \leq g(\tilde \mu^N_T) \leq \tilde{g}^N(\tilde \mu^N_T).$$
\vspace{4pt}

\textit{Case b.}
Suppose that $ \tilde \mu^N_T > r_\delta + \frac{cC}{\sqrt{N}}$. Then, $\tilde \mu^{i,N}_T  > r_\delta$ (for all $i \in \{1,\cdots,N\}$),
and we obtain
$$ v^N_T = \frac{1}{N} \sum_{i=1}^{N} g(\tilde \mu^{i,N}_T) = -1 = g(\tilde \mu^N_T) \leq \tilde{g}^N(\tilde \mu^N_T).$$
\vspace{4pt}

\textit{Case c.}
Suppose that $ r_\delta - \frac{cC}{\sqrt{N}} \leq \tilde \mu^N_T \leq r_\delta + \frac{cC}{\sqrt{N}}$.
Then, $\tilde \mu^N_T - \frac{2cC}{\sqrt{N}} \leq \tilde \mu^{i,N}_T  \leq \tilde \mu^N_T + \frac{2cC}{\sqrt{N}}$
(for all $i \in \{1,\cdots,N\}$), since $g$ is non-increasing, we obtain, 
if $\gamma_{N} \geq 2cC/\sqrt{N}$,
\begin{equation*}
\begin{split}
0 = 
\tilde{g}^N(\tilde \mu^N_T) 
- g \bigl(\tilde\mu^N_{T} - \gamma_{N}\bigr)
&\leq 
\tilde{g}^N(\tilde \mu^N_T) 
- g\bigg( \tilde \mu^N_T - \frac{2cC}{\sqrt{N}} \bigg) 
\\
&\leq \tilde{g}^N(\tilde \mu^N_T) - \frac{1}{N} \sum_{i=1}^{N} g(\tilde \mu^{i,N}_T) = \tilde{g}^N(\tilde \mu^N_T) - v^N_T. 
\end{split}
\end{equation*}
The proof is complete.
\end{proof}


\subsubsection*{Comparison between $\tilde \theta^N$ and $\theta^N$}

Similar to Lemma \ref{lem:non:decreasing}, the first point is to notice that $\tilde{\theta}^N$ is non-increasing and that $\theta^N$ is also non-increasing.

\begin{lem}
\label{le:1}
For any fixed $N \geq 1$, 
the functions $\theta^N$ and $\tilde\theta^N$ are non-increasing in the space argument. 
\end{lem}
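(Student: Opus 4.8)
The plan is to reproduce, for each fixed $N$, the argument already used for Lemma \ref{lem:non:decreasing}, now applied to the pair of viscous Burgers equations \eqref{PDE3}, whose viscosity coefficient is $\sigma^2/N$ in place of $\sigma_0^2$. First I would recall that, $N$ being frozen, both $\theta^N$ and $\tilde\theta^N$ are classical solutions of uniformly parabolic quasilinear equations, hence (by the same references \cite{Ladyzhenskaya,Delarue02,DelarueGuatteri,MaProtterYong} invoked earlier) are Lipschitz continuous in space, uniformly in time --- with a Lipschitz constant that may depend on $N$, which is harmless here. Consequently each of them is the decoupling field of the corresponding FBSDE and enjoys the probabilistic representation
$$
\theta^N(t,x) = \mathbb{E}\bigl[ g\bigl(\mu_T^{t,x,N}\bigr)\bigr],
\qquad
\tilde\theta^N(t,x) = \mathbb{E}\bigl[ \tilde g^N\bigl(\tilde\mu_T^{t,x,N}\bigr)\bigr],
$$
where
$$
d\mu_s^{t,x,N} = - w_s^{-2}\,\theta^N\bigl(s,\mu_s^{t,x,N}\bigr)\,ds + \frac{\sigma}{\sqrt{N}}\, w_s^{-1}\, dB_s,
\qquad \mu_t^{t,x,N}=x,
$$
$B$ being a one-dimensional Brownian motion, and similarly for $\tilde\mu^{t,x,N}$ with $\tilde\theta^N$ in the drift; these SDEs are well-posed since their drifts are Lipschitz in the space variable.

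Next I would invoke the comparison principle for one-dimensional SDEs with Lipschitz coefficients: driving $\mu^{t,x,N}$ and $\mu^{t,y,N}$ by the same Brownian motion, $x\le y$ forces $\mu_T^{t,x,N}\le\mu_T^{t,y,N}$ almost surely. Combined with the fact that $g$ is non-increasing, this yields $\theta^N(t,x)\ge\theta^N(t,y)$ whenever $x\le y$, i.e.\ $\theta^N$ is non-increasing in space. The argument for $\tilde\theta^N$ is word-for-word the same, using that $\tilde g^N$ is non-increasing --- which is exactly the content of the first assertion of the lemma stated right after \eqref{eq:tilde:g}, $\tilde g^N$ being built by flattening and shifting pieces of the non-increasing function $g$.

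I do not expect a genuine obstacle here; the only subtlety worth flagging is that, unlike this monotonicity statement, the Lipschitz bound on $\partial_x\theta^N$ blows up as $N\to\infty$ (Lemma \ref{lem:gradient}), so the present lemma is intrinsically an $N$-by-$N$ statement --- which is nonetheless precisely what is needed to run, together with the $L^1$ bound \eqref{eq:2612:1}, the subsequent comparison between $\tilde\theta^N$ and $\theta^N$. As an alternative that avoids the probabilistic representation altogether, one could differentiate \eqref{PDE3} in $x$: for fixed $N$, the function $p:=\partial_x\theta^N$ solves a linear parabolic equation with bounded coefficients and with a terminal datum whose spatial derivative is non-positive (in the a.e.\ sense), so the maximum principle gives $p\le 0$ throughout $[0,T)\times\mathbb{R}$; the same computation applies to $\tilde\theta^N$. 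Either route closes the proof.
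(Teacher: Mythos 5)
Your proposal is correct and follows essentially the same route the paper intends: the paper gives no separate proof of Lemma \ref{le:1} beyond the remark that it is ``similar to Lemma \ref{lem:non:decreasing}'', i.e.\ precisely the probabilistic representation of the decoupling field plus the comparison theorem for one-dimensional SDEs together with the monotonicity of $g$ and $\tilde g^N$, which is what you wrote. The alternative maximum-principle argument you sketch is a valid bonus but not needed.
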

We make use of the non-increasing property to get the following crucial estimate:
\begin{lem}
\label{le:2}
We can choose $(\gamma_{N})_{N \geq 1}$
in \eqref{eq:tilde:g} such that
 $$\gamma_{N} \rightarrow 0, \quad {\gamma_{N}} \sqrt{N} \rightarrow +\infty,
 \quad \text{as} \quad N \rightarrow +\infty, $$ 
and then find a sequence $(\ell_{N})_{N \geq 1}$
such that 
$$\ell_{N} \rightarrow +\infty, \quad \ell_{N} \vert \ln(N) \vert^{-1/8}=0
 \quad \text{as} \quad N \rightarrow +\infty,$$
 and so that, for any
non-negative non-decreasing curve $\psi \in {\mathcal C}([0,T];\RR)$, 
which is strictly above the curve $t \mapsto   (r_{\delta}-r_{t})_{+}$  
on a left-open interval containing $[\delta,T]$,
it holds that
\begin{equation*}
\lim_{N \rightarrow + \infty}
\sup_{(t,x) \in [0,T], \vert x \vert \geq \ell_{N}/N + \psi_{t}}
\vert (\tilde \theta^{N} - \theta)(t,x) \vert =0.
\end{equation*}
\end{lem}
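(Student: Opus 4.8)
The plan is to derive the estimate by combining Proposition~\ref{propPSI} (applied with the vanishing viscosity $\sigma_{0}=\sigma N^{-1/2}$) with the comparison principle of Lemma~\ref{comparePDEs}, and to treat a small neighbourhood of the shock of $\theta$ by a direct Cole--Hopf computation. First I would fix the parameters. Note that $\theta^{N}$ is exactly $\theta^{\sigma_{0}}$ for $\sigma_{0}=\sigma N^{-1/2}$, since \eqref{PDE3} is \eqref{PDE} with that $\sigma_{0}$, and likewise $\tilde\theta^{N}$ solves \eqref{PDE} with viscosity $\sigma N^{-1/2}$ and terminal datum $\tilde g^{N}$. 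I would take $\gamma_{N}:=N^{-1/4}$ (so $\gamma_{N}\to 0$ and $\gamma_{N}\sqrt N\to+\infty$, which is also the condition needed for \eqref{eq:2612:1} and Lemma~\ref{le:0}), pick any admissible $L$ in Proposition~\ref{propPSI} that moreover satisfies $L(s)\,\vert\ln s\vert^{-1/8}\to 0$ as $s\to 0$ (e.g.\ $L(s)=\vert\ln s\vert^{1/16}$), and set $\ell_{N}:=\sigma^{2}L(\sigma N^{-1/2})$; then $\ell_{N}\to+\infty$, $\ell_{N}\vert\ln N\vert^{-1/8}\to 0$ (using $\vert\ln(\sigma N^{-1/2})\vert\sim\tfrac12\vert\ln N\vert$) and $\ell_{N}/N=\sigma_{0}^{2}L(\sigma_{0})$. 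With these choices Proposition~\ref{propPSI} gives $\sup_{\vert x\vert\ge\ell_{N}/N+\psi_{t}}\vert(\theta^{N}-\theta)(t,x)\vert\to 0$ for every admissible $\psi$, so, writing $\tilde\theta^{N}-\theta=(\tilde\theta^{N}-\theta^{N})+(\theta^{N}-\theta)$, the task reduces to controlling $\tilde\theta^{N}-\theta^{N}$ on $\{\vert x\vert\ge\ell_{N}/N+\psi_{t}\}$.

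Next I would exploit comparison. Inspecting the four cases of \eqref{eq:tilde:g} and using that $g$ is non-increasing one gets $g\le\tilde g^{N}\le g(\cdot-\gamma_{N})$. Since the coefficient $w_{t}^{-2}$ of \eqref{PDE} depends on $t$ only, \eqref{PDE} is invariant under translations in $x$, so the solution with terminal datum $g(\cdot-\gamma_{N})$ is $(t,x)\mapsto\theta^{N}(t,x-\gamma_{N})$, and Lemma~\ref{comparePDEs} yields $\theta^{N}(t,x)\le\tilde\theta^{N}(t,x)\le\theta^{N}(t,x-\gamma_{N})$ on $[0,T]\times\RR$. I would then split $\theta^{N}(t,x-\gamma_{N})-\theta(t,x)=\big[\theta^{N}(t,x-\gamma_{N})-\theta(t,x-\gamma_{N})\big]+\big[\theta(t,x-\gamma_{N})-\theta(t,x)\big]$: by Proposition~\ref{propPSI} the first bracket tends to $0$ uniformly where $\vert x-\gamma_{N}\vert\ge\ell_{N}/N+(\psi_{t}-\gamma_{N})_{+}$ (for $N$ large the curve $(\psi-\gamma_{N})_{+}$ is again admissible, strict positivity near $[\delta,T]$ being preserved by compactness), while by \eqref{teta} the second bracket vanishes whenever $\theta(t,\cdot)$ is constant on $[x-\gamma_{N},x]$, which holds for $t\ge\delta$ (there $\vert x\vert>r_{\delta}-r_{t}$ keeps $x$ strictly outside the rarefaction fan, by admissibility) and for $t<\delta$ as soon as $0\notin[x-\gamma_{N},x]$. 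Hence $0\le\tilde\theta^{N}-\theta^{N}\le\theta^{N}(\cdot,\cdot-\gamma_{N})-\theta^{N}\to 0$ uniformly on $\{\vert x\vert\ge\ell_{N}/N+\psi_{t}\}$, except possibly on the shock region $\mathcal S_{N}:=\{t\le\delta'',\ \ell_{N}/N\le x\le 2\gamma_{N}\}$, where $\delta''<\delta$ is a fixed time with $\psi_{\delta''}>0$ (such $\delta''$ exists since $\psi_{\delta}>0$ and $\psi$ is non-decreasing); a short bookkeeping shows that for $N$ large $\mathcal S_{N}$ does contain the leftover set, because $\ell_{N}/N\le\gamma_{N}$ and $\{\psi_{t}\le\gamma_{N}\}\subseteq[0,\delta'']$ eventually.

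Finally I would handle $\mathcal S_{N}$ directly. On $\mathcal S_{N}$ one has $\theta(t,x)=-1$, and Proposition~\ref{propPSI} applied to an auxiliary admissible curve vanishing on $[0,\delta'']$ already gives $\theta^{N}\to -1$ there, so it suffices to show $\sup_{\mathcal S_{N}}\vert\tilde\theta^{N}-\theta^{N}\vert\to 0$. Here I would return to the Cole--Hopf formula \eqref{teta0} and its analogue for $\tilde g^{N}$: both $\theta^{N}(t,x)$ and $\tilde\theta^{N}(t,x)$ are ratios $\int_{\RR}\tfrac{x-y}{r_{t}}e^{-\sigma_{0}^{-2}H(y)}\,dy\big/\int_{\RR}e^{-\sigma_{0}^{-2}H(y)}\,dy$, with $H(y)=\int_{0}^{y}g(v)\,dv+\tfrac{(x-y)^{2}}{2r_{t}}$ (resp.\ with $\int_{0}^{y}\tilde g^{N}$) and $\sigma_{0}^{-2}=N/\sigma^{2}$. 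A short computation shows that for $(t,x)\in\mathcal S_{N}$ and $N$ large $H$ has exactly two local minima, $y^{\star}=x+r_{t}$ and $y^{\star\star}=x-r_{t}$, with $y^{\star}>r_{\delta}+\gamma_{N}$ (because $r_{t}>r_{\delta}$ for $t<\delta$ whereas $\gamma_{N}\to 0$) and $H(y^{\star\star})-H(y^{\star})=2x\ge 2\ell_{N}/N$. Thus the global minimizer $y^{\star}$ lies strictly to the right of the support $[r_{\delta}-2\gamma_{N},r_{\delta}+\gamma_{N}]$ of $\tilde g^{N}-g$, on which $\int_{0}^{y}(\tilde g^{N}-g)\equiv 2\gamma_{N}^{2}/r_{\delta}$ is constant and cancels in the ratio, while the neighbourhood of $y^{\star\star}$ contributes only $O(e^{-2\ell_{N}/\sigma^{2}})$ and the rest of the line an exponentially smaller amount; a routine Laplace estimate then gives $\sup_{\mathcal S_{N}}\vert\tilde\theta^{N}-\theta^{N}\vert=O(e^{-c\ell_{N}})\to 0$. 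The case $x<0$ is symmetric ($\tilde g^{N}=g$ to the left of $r_{\delta}-2\gamma_{N}$, so $x\mapsto x-\gamma_{N}$ never crosses $0$ and no shock region arises), which closes the argument. I expect this last step to be the main obstacle: the $L^{1}$-bound \eqref{eq:2612:1} is useless near the shock, since the space-Lipschitz constant of $\tilde\theta^{N}$ blows up like $N$ (Lemma~\ref{lem:gradient}), so turning an $L^{1}$-bound of order $\gamma_{N}^{2}$ into an $L^{\infty}$-bound would only produce something of order $\gamma_{N}\sqrt N\to+\infty$; one really has to use the explicit Cole--Hopf representation together with the fact that $\tilde g^{N}-g$ sits near $x=r_{\delta}$, a fixed positive distance from the shock of $\theta$ at $x=0$.
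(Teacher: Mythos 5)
There is a genuine gap, and it sits exactly where you predicted the main obstacle would be. Your choice $\gamma_{N}=N^{-1/4}$ is incompatible with the conclusion of the lemma. The perturbation $\tilde g^{N}-g$ carries the $L^{1}$-mass $2\gamma_{N}^{2}/r_{\delta}$, which is preserved by the flow (see \eqref{eq:2612:1}); since $\tilde\theta^{N}-\theta^{N}\geq 0$ and the jump of $\theta$ across the shock is $2$, this mass displaces the shock of $\tilde\theta^{N}$ at times $t<\delta$ from $x=0$ to $x\approx\gamma_{N}^{2}/r_{\delta}=N^{-1/2}/r_{\delta}$, which is much larger than $\ell_{N}/N$. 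Hence for small $t$ and $x\in[\ell_{N}/N,\tfrac12 N^{-1/2}/r_{\delta}]$ one has $\tilde\theta^{N}(t,x)\to+1$ while $\theta(t,x)=-1$, and the supremum in the statement does not tend to $0$ (take an admissible $\psi$ vanishing on $[0,\delta/2]$). You can see the failure inside your own Laplace computation on $\mathcal S_{N}$: the offset $\tilde H-H\equiv c_{N}:=2\gamma_{N}^{2}/r_{\delta}$ to the right of the support of $\tilde g^{N}-g$ does \emph{not} ``cancel in the ratio'', because it is absent on the half-line containing $y^{\star\star}$; the relative weight of the $y^{\star\star}$-region is therefore $\exp(\lambda(c_{N}-2x))$ with $\lambda=N/\sigma^{2}$, not $\exp(-2\lambda x)$, and $\lambda c_{N}=2\sqrt{N}/(\sigma^{2}r_{\delta})$ dwarfs $2\lambda x=2\ell_{N}/\sigma^{2}$ near $x=\ell_{N}/N$, so the wrong well dominates. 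The cure is to impose $N\gamma_{N}^{2}=o(\ell_{N})$ in addition to $\gamma_{N}\sqrt{N}\to+\infty$; the paper takes $\gamma_{N}=\ell_{N}^{1/4}N^{-1/2}$, for which $\lambda c_{N}=O(\ell_{N}^{1/2})=o(2\lambda x)$ uniformly over $x\geq\ell_{N}/N$, and your Laplace step would then close.

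With that correction your route (sandwich by translation plus a local Cole--Hopf analysis at the shock) is workable but heavier than the paper's. The paper never touches Cole--Hopf here: it integrates \eqref{eq:2612:1} over the window $[\ell_{N}^{1/2}/N+\psi_{t},\,\ell_{N}/N+\psi_{t}]$, of width $(\ell_{N}-\ell_{N}^{1/2})/N$, and uses the monotonicity of $\tilde\theta^{N}$ and $\theta^{N}$ (Lemma \ref{le:1}) to convert the resulting bound on the average into the one-sided pointwise bound $-\tilde{\theta}^N(t,x)\geq-\tilde{\theta}^N(t,\ell_{N}N^{-1}+\psi_{t})\geq-\theta^N(t,\ell_{N}^{1/2}N^{-1}+\psi_{t})-2\ell_N^{1/2}/(r_{\delta}(\ell_{N}-\ell_{N}^{1/2}))$, and then concludes by Proposition \ref{propPSI}. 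This is precisely the $L^{1}$-to-$L^{\infty}$ conversion you dismissed as useless, except that it is carried out through monotonicity over a window of width of order $\ell_{N}/N$ rather than through the Lipschitz constant of Lemma \ref{lem:gradient} --- and it is this step that forces $\gamma_{N}^{2}\ll\ell_{N}/N$, i.e.\ precisely rules out your choice of $\gamma_{N}$.
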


\begin{proof}
Take $\psi$ as in the statement. then, it is worth noticing that, for the prescribed values of 
$(t,x)$ in the supremum, $\theta(t,x)=-\textrm{\rm sign}(x)$.

Without any loss of generality, we can reduce the supremum to positive $x$'s. Hence, 
it suffices to prove that 
\begin{equation*}
\lim_{N \rightarrow  + \infty}
\sup_{(t,x) \in [0,T],  x  \geq \ell_{N}/N + \psi_{t}}
\vert 1 +\tilde \theta^{N}(t,x) \vert =0.
\end{equation*}
For a sequence $(\ell_{N})_{N \geq 1}$
such that 
$$\ell_{N} \rightarrow +\infty, \quad \ell_{N} \vert \ln(N) \vert^{-1/8}=0
 \quad \text{as} \quad N \rightarrow +\infty,$$
choose $(\gamma_{N}=\ell_{N}^{1/4} N^{-1/2})_{N \geq 1}$ 
in
\eqref{eq:tilde:g}.
By (\ref{eq:2612:1}), we know that, 
for any $t \in [0,T]$,
\begin{equation*}
- \int_{\ell_{N}^{1/2} N^{-1}+\psi_{t}}^{\ell_{N} N^{-1}+\psi_{t}} \tilde{\theta}^N(t,x) dx \geq - \int_{\ell_{N}^{1/2}N^{-1}+\psi_{t}}^{\ell_{N} N^{-1}+\psi_{t}} \theta^N(t,x) dx - \frac{2\ell_{N}^{1/2}}{r_{\delta} N}.
\end{equation*}
Hence, by Lemma \ref{le:1},
for any $t \in [0,T]$ and any $x \geq 
\ell_{N} N^{-1}+\psi_{t}$, 
\begin{equation}
\label{eq:thetaN:b-a}
- \tilde{\theta}^N(t,x) \geq 
- \tilde{\theta}^N\bigl(t,\ell_{N} N^{-1}+\psi_{t}\bigr) \geq - \theta^N\bigl(t,\ell_{N}^{1/2} N^{-1}+\psi_{t}\bigr) - \frac{2\ell_N^{1/2}}{r_{\delta} (\ell_{N} - \ell_{N}^{1/2})}. 
\end{equation}
We now recall Proposition \ref{propPSI}, from which we deduce
\begin{equation*}
\lim_{N \rightarrow +\infty} \sup_{t \in [0,T]} \bigl\vert 1 
+ \theta^N\bigl(t,\ell_{N}^{1/2} N^{-1}+\psi_{t}\bigr) \bigr\vert =0.
\end{equation*}
This completes the proof.
\end{proof}

\subsection{Proof of Theorem 
\ref{mainN} }

\begin{proof}
It suffices to invoke 
Theorem 
\ref{thm:general} (indexing the sequence of measures by $N$ instead of $\sigma_{0}$). 
(A1) is a consequence of 
Lemma \ref{le:2}. 
(A2) follows from 
the system 
\eqref{fbsdeNew:approx} and Lemma
\ref{le:0}.
(A3) 
 is a consequence of uniqueness (in law) 
to 
\eqref{fbsdeNext}, 
noticing that
$((-X^i_{t},-V_{t}^i,z_{t}^{i,k})_{t \in [0,T]})_{i,k=1,\cdots,N}$
is a solution of the system
\eqref{fbsdeNext}
driven by $(-W^1_{t},\cdots,-W^N_{t})_{t \in [0,T]}$.
\end{proof}

\section{Appendix: Proof of
Proposition
\ref{propPSI}}
\label{se:appendix}

Consider $\sigma_0 \in (0,1)$ and $(t,x) \in [0,\delta) \times\mathbb{R}$ with $\vert x \vert \leq r_{t} - r_{\delta}$. 
It is easily checked (using a change of variable) that $\theta^{\sigma_{0}}$ is odd in $x$ and, thus, 
that $\Psi$ is also odd in $x$. Therefore, we can just focus on $\Psi(t,x,\sigma_{0})$ for $x >0$. 

To simplify notation, we write $\lambda := \sigma_0^{-2} \in (1 , +\infty)$, so that obtaining asymptotic expressions as $\sigma_0 \rightarrow 0$ is equivalent to obtaining asymptotic expressions as $\lambda \rightarrow +\infty$. 
We also use the definitions:
\begin{equation*}
\begin{split}
&g(y) :=  -\frac{y}{r_\delta}  {\mathbf 1}_{ |y| \leq r_\delta } - \textrm{\rm sign}(y) {\mathbf 1}_{ |y| > r_\delta }, 
\quad h(y):=  -\int_0^y g(v)dv - \frac{(x-y)^2}{2 r_t}, 
\end{split}
\end{equation*}
and
\begin{equation*}
\textrm{\rm erf}(y) := \frac{2}{\sqrt{\pi}}\int_0^y \exp\bigl(- v^2\bigr)dv, 
\end{equation*}
for $(t,y) \in [0,T) \times \RR$. 

Lastly, throughout the proof, we use the generic notation $Q(y)$ for a polynomial function of $y$ of degree less than or equal to $1$. Possibly, $Q$ may depend on $(t,x)$.
\vspace{5pt}

\textit{Preliminary computation:} For every $(t,x)$ as above, we define $\bar{y} := \frac{-xr_\delta}{r_t - r_\delta}$, $y^*_1 := x-r_t$,  $y^*_2 := x + r_t.$ It holds that $y^*_1$ is a global maximum of $h$ on $(-\infty,-\bar{y})$ and $y^*_2$ is a global maximum of $h$ on $(\bar{y},+\infty).$ Indeed, 
\begin{equation*}
\label{eq:h':case:2}
\begin{split}
h'(y) &= -1 + \frac{x-y}{r_t} > -1 + \frac{x-y^*_1}{r_t} = 0,\hspace{2mm} \textrm{\rm for all} \hspace{2mm} y \in (-\infty,y^*_1),  \\
h'(y)& = -1 + \frac{x-y}{r_t}  < -1 + \frac{x-y^*_1}{r_t} = 0 , \hspace{2mm}\textrm{\rm for all} \hspace{2mm} y \in ( y^*_1,-r_\delta), 
\\
h'(y) &= y \bigg( \frac{r_t -r_\delta}{r_t r_\delta} \bigg) + \frac{x}{r_t}  < \bar{y} \bigg( \frac{r_t -r_\delta}{r_t r_\delta} \bigg) + \frac{x}{r_t} = 0 , \hspace{2mm} \textrm{\rm for all} \hspace{2mm} y \in (-r_\delta, \bar{y}),  
\\
h'(y) &= y \bigg( \frac{r_t -r_\delta}{r_t r_\delta} \bigg) + \frac{x}{r_t}  > \bar{y} \bigg( \frac{r_t -r_\delta}{r_t r_\delta} \bigg) + \frac{r_\delta-r_t}{r_t} = 0 , \hspace{2mm} \textrm{\rm for all} \hspace{2mm} y \in ( \bar{y},r_\delta),  
\\
h'(y)& = 1 + \frac{x-y}{r_t}  > 1 + \frac{x-y^*_2}{r_t} = 0 , \hspace{2mm}\textrm{\rm for all} \hspace{2mm} y \in ( r_\delta,y^*_2), 
\\
h'(y) &= 1 + \frac{x-y}{r_t} < 1 + \frac{x-y^*_2}{r_t} = 0,\hspace{2mm} \textrm{\rm for all} \hspace{2mm} y \in (y^*_2,+\infty).  \end{split}
\end{equation*}
Now, for a polynomial function of order less than or equal to $1$, we compute
\begin{align*}
&\int_{-\infty}^{\infty} Q(y)\exp(\lambda h(y)) dy \notag 
\\
&= \int_{-\infty}^{-r_{\delta}} Q(y)
\exp(\lambda h(y)) dy 
+ \int_{-r_{\delta}}^{r_{\delta}} Q(y) 
\exp(\lambda h(y)) dy 
+ \int_{r_{\delta}}^{+\infty} Q(y)\exp(\lambda h(y)) dy \notag 
\\
&=  \int_{-\infty}^{-r_{\delta}-y^*_1} Q(u+y^*_1)
\exp(\lambda h(u+y^*_1)) du 
+\int_{-r_{\delta}}^{r_{\delta}} Q(y) 
\exp(\lambda h(y)) dy 
\\
&\hspace{15pt} + \int_{r_{\delta}-y^*_2}^{+\infty} Q(u+y^*_2)\exp(\lambda h(u+y^*_2)) du.
\end{align*}
We have
\begin{equation*}
\begin{split}
&\forall  u \in ( -\infty,-r_{\delta}-y^*_1), \hspace{2mm} h(u+y^*_1) - h(y^*_1) =
- \int_{u+y_{1}^*}^{y_{1}^*}
\frac{y_{1}^*-z}{r_{t}}
dz
=
 -  \frac{u^2}{2r_{t}} < 0, 
\\
&\forall  u \in ( r_{\delta}-y^*_2,+\infty), \hspace{2mm} h(u+y^*_2) - h(y^*_2) =
\int_{y_{2}^*}^{u+y_{2}^*}
\frac{y_{2}^*-z}{r_{t}}
dz = 
 - \frac{u^2}{2r_{t}} < 0.
\end{split}
\end{equation*}
Now, letting
\begin{equation*}
B_{1}: = - \frac{r_{\delta} + y_{1}^*}{\sqrt{2r_{t}}} 
= - \frac{x + r_{\delta}  -r_{t}}{\sqrt{2r_{t}}}
= \frac{r_{t} - r_{\delta}  -x}{\sqrt{2r_{t}}}
\geq 0,
\end{equation*}
we have
\begin{equation*}
\begin{split}
&\int_{-\infty}^{-r_{\delta}-y_{1}^*} Q(u+y^*_1)
\exp(\lambda h(u+y^*_1)) du   
\\
&= \exp \bigl(\lambda h(y^*_{1})\bigr) \int_{-\infty}^{-r_{\delta}-y_{1}^*} \bigl( Q(y^*_1)
+ Q'(y_{1}^*) u \bigr) 
\exp( - \frac{\lambda u^2}{2 r_{t}}) du  
\\
&= \sqrt{2r_{t}} \exp \bigl(\lambda h(y^*_{1})\bigr) \int_{-\infty}^{B_{1}} \bigl( Q(y^*_1)
+  \sqrt{2 r_{t}} Q'(y_{1}^*) s \bigr) 
\exp( - \lambda s^2) ds
\\
&= \sqrt{2r_{t}} \exp \bigl(\lambda h(y^*_{1})\bigr)
\biggl( Q(y_{1}^*) \sqrt{\frac{\pi}{4}} \frac{\text{erf}(\sqrt{\lambda} B_{1})+1}{\sqrt{\lambda}}
-
Q'(y_{1}^*)\frac{\sqrt{2r_{t}}}{2\lambda}
 \exp( - \lambda B_{1}^2)  \biggr)
 \\
 &= \exp \bigl(\lambda h(y^*_{1})\bigr)
 Q(y_{1}^*) \sqrt{\frac{\pi r_{t}}{2}} \frac{\text{erf}(\sqrt{\lambda} B_{1})+1}{\sqrt{\lambda}}
-
\exp \bigl(\lambda h(y^*_{1})\bigr)
Q'(y_{1}^*)
\frac{r_{t}}{\lambda}
 \exp( - \lambda B_{1}^2). 
\end{split}
\end{equation*}
Similarly, letting 
letting
\begin{equation*}
B_{2} :=  \frac{r_{\delta} - y_{2}^*}{\sqrt{2r_{t}}}
=
\frac{r_{\delta} - r_{t} - x}{\sqrt{2r_{t}}}
=
-
\frac{r_{t} - r_{\delta}  + x}{\sqrt{2r_{t}}}
 \leq 0,
\end{equation*}
we have
\begin{equation*}
\begin{split}
&\int_{r_{\delta}-y_{2}^*}^{+\infty} Q(u+y^*_2)
\exp(\lambda h(u+y^*_2)) du   
\\
&= \exp \bigl(\lambda h(y^*_{2})\bigr)
 Q(y_{2}^*) \sqrt{\frac{\pi r_{t}}{2}} \frac{1-\text{erf}(\sqrt{\lambda} B_{2})}{\sqrt{\lambda}}
+
\exp \bigl(\lambda h(y^*_{2})\bigr)
Q'(y_{2}^*)
\frac{r_{t}}{\lambda}
 \exp( - \lambda B_{2}^2). 
\end{split}
\end{equation*}
Therefore, 
\begin{equation}
\label{eq:case2:expansion}
\begin{split}
\int_{-\infty}^{\infty}  Q(y)\exp(\lambda h(y)) dy &= \exp(\lambda h(y^*_1)) Q(y^*_1)\sqrt{\frac{\pi  r_t}{2}} \frac{ \big(\text{erf}(\sqrt{ \lambda } B_1)+1\big)}{\sqrt{\lambda}} 
\\ 
&\hspace{15pt} + \exp(\lambda h(y^*_2)) Q(y^*_2) \sqrt{\frac{ \pi r_t }{2}}  \frac{ \big(1-\text{erf}(\sqrt{ \lambda} B_2)\big)}{\sqrt{\lambda}}
+ {\mathcal R}(t,x,\lambda),
\end{split}
\end{equation}
with
\begin{align*}
{\mathcal R}(t,x,\lambda) &= - \exp(\lambda h(y^*_1)) Q'(y^*_1) r_t 
 \frac{\exp(-\lambda B_1^2)}{\lambda} 
 \notag \\ 
& \hspace{15pt} + \exp(\lambda h(y^*_2)) Q'(y^*_2) r_{t} \frac{\exp(-\lambda B_2^2)}{\lambda}
+ 
\int_{-r_{\delta}}^{r_{\delta}} Q(y) 
\exp(\lambda h(y)) dy.
\end{align*}  

\textit{Using Cole-Hopf formula:}
Define now $q(y) = \frac{x-y}{r_t} + \textrm{\rm sign}(x)$, and recall that
$$ \Psi(t,x,\sigma_0) = \theta^{\sigma_0}(t,x) - \theta(t,x) = \frac{\int_{-\infty}^{+\infty} q(y) \exp(\lambda h(y)) dy}{\int_{-\infty}^{+\infty} \exp(\lambda h(y)) dy} .$$ 
\vspace{5pt}

Recall that $0 \leq x < r_{t} - r_{\delta}$. Then, $q(y)= \frac{x-y}{r_t} + 1$, and then $q(y_{1}^*)=2$ and 
$q(y_{2}^*)=0$. Therefore,
by \eqref{eq:case2:expansion} with $Q(y)=q(y)$ and with $Q(y)=1$, we obtain
$$ \vert \Psi(t,x,\sigma_0)\vert  \leq  \frac{2 \sqrt{\pi r_t} \exp(\lambda h(y^*_1)) \bigg( \frac{1+\textrm{\rm erf}(\sqrt{\lambda}B_1)}{\sqrt{\lambda}}\bigg) +\sqrt{2}  \vert {\mathcal R}(t,x,\lambda) \vert}{\sqrt{\pi r_t} \exp(\lambda h(y^*_1)) \bigg( \frac{1+\textrm{\rm erf}(\sqrt{\lambda}B_1)}{\sqrt{\lambda}}\bigg) + \sqrt{\pi r_t} \exp(\lambda h(y^*_2)) \bigg( \frac{1-\textrm{\rm erf}(\sqrt{\lambda}B_2)}{\sqrt{\lambda}}\bigg)},$$
the remainder ${\mathcal R}$  being computed with $Q=q$. Therefore,

\begin{align*}
|\Psi(t,x,\sigma_0) |& \leq \frac{2\sqrt{\pi r_t} \exp(\lambda h(y^*_1)) \bigg( \frac{1+\textrm{\rm erf}(\sqrt{\lambda}B_1)}{\sqrt{\lambda}}\bigg)}{ \sqrt{\pi r_t} \exp(\lambda h(y^*_2)) \bigg( \frac{1-\textrm{\rm erf}(\sqrt{\lambda}B_2)}{\sqrt{\lambda}}\bigg)}
+\sqrt{2} {\mathcal I}_{1}(t,x,\lambda) +\sqrt{2} {\mathcal I}_{2}(t,x,\lambda) \notag
\\
&= 2 \exp\big( \lambda [h(y^*_1) - h(y^*_2)] \big) \bigg( \frac{1 + \textrm{\rm erf}(\sqrt{\lambda} B_1)}{1 - \textrm{\rm erf}(\sqrt{\lambda} B_2)}\bigg)
+ \sqrt{2}{\mathcal I}_{1}(t,x,\lambda) + \sqrt{2}{\mathcal I}_{2}(t,x,\lambda),
\end{align*}
where
\begin{equation}
\label{eq:I1:I2}
\begin{split}
{\mathcal I}_{1}(t,x,\lambda) &=  \frac{\exp(\lambda h(y^*_1))  
 \frac{\exp(-\lambda B_1^2)}{\lambda} 
  + \exp(\lambda h(y^*_2))  \frac{\exp(-\lambda B_2^2)}{\lambda}}{
   \sqrt{\pi r_t} \exp(\lambda h(y^*_2)) \bigg( \frac{1-\textrm{\rm erf}(\sqrt{\lambda}B_2)}{\sqrt{\lambda}}\bigg)},
   \\
   {\mathcal I}_{2}(t,x,\lambda) &=  \frac{ \int_{-r_{\delta}}^{r_{\delta}} \vert q(y) \vert \exp( \lambda h(y) ) dy}{ \sqrt{\pi r_t} \exp(\lambda h(y^*_2)) \bigg( \frac{1-\textrm{\rm erf}(\sqrt{\lambda}B_2)}{\sqrt{\lambda}}\bigg)}.
\end{split}
\end{equation}
Notice now the following key facts:
\begin{equation}
\label{eq:B22}
\begin{split}
&h(y^*_2) - h(y^*_1) = 
\int_{y_{1}^*}^{-r_{\delta}}
(-1) dz +
\int_{-r_{\delta}}^{r_{\delta}}
\frac{z}{r_{\delta}} dz
+ \int_{r_{\delta}}^{y_{2}^*}
dz - \frac{(x-y_{2}^*)^2 - (x-y_{1}^*)^2}{2 r_{t}}
\\
&\hspace{64pt} = \bigl( r_{\delta} + y_{1}^* \bigr) + 0 + \bigl( y_{2}^* - r_{\delta}
\bigr) - \frac{r_{t}^2 - r_{t}^2}{2r_{t}}
=  2x,
\\
&B_{2} <0, \quad B_{2}^2 = \frac{\bigl( x+r_{t}-r_{\delta}\bigr)^2}{2 r_{t}}  \geq \max 
\Bigl( \frac{x^2}{2r_{t}}, \frac{\bigl( r_{t}-r_{\delta}\bigr)^2}{2r_{t}}\Bigr). 
\end{split}
\end{equation}
In particular, 
\begin{equation*}
2 \exp\big( \lambda [h(y^*_1) - h(y^*_2)] \big) \bigg( \frac{1 + \textrm{\rm erf}(\sqrt{\lambda} B_1)}{1 - \textrm{\rm erf}(\sqrt{\lambda} B_2)}\bigg)
\leq 4 \exp \bigl( - 2 \lambda x \bigr),
\end{equation*}
and 
\begin{equation*}
\vert {\mathcal I}_{1}(t,x,\lambda) \vert \leq \frac2{\sqrt{\lambda \pi  r_{t}}}.  
\end{equation*}

\textit{Handling ${\mathcal I}_{2}$:}
To handle ${\mathcal I}_{2}(t,x,\lambda)$, we notice that $\vert q(y) \vert \leq 1$ for $y \in (-r_{\delta},r_{\delta})$. Also, 
for $y \in (\bar y,r_{\delta})$
\begin{equation*}
\begin{split}
h(r_{\delta}) - h(y) 
&= \int_y^{r_{\delta}} h'(z) dz
= \int_y^{r_{\delta}} \bigl( z - \bar y \bigr)  \frac{r_t -r_\delta}{r_t r_\delta} 
dz
\geq \frac{r_t -r_\delta}{2 r_{t} r_{\delta}} (r_{\delta}- y)^2.
\end{split}
\end{equation*}
Hence,
\begin{equation*}
\begin{split}
\int_{\bar y}^{r_{\delta}} \vert q(y) \vert \exp \bigl( \lambda h(y) \bigr) dy 
&\leq  \exp \bigl( \lambda h(r_{\delta}) \bigr) \int_{\bar y}^{r_{\delta}} \exp \bigl( -
\lambda \frac{(r_{t}-r_{\delta})(r_{\delta}-y)^2}{2r_{t} r_{\delta}}\bigr) dy
\\
&\leq \exp \bigl( \lambda h(r_{\delta})\bigr) \sqrt{\frac{2 \pi r_{t}
r_{\delta}}{\lambda (r_{t} - r_{\delta})}}.
\end{split}
\end{equation*}
By the same argument, 
for $y \in (-r_{\delta},\bar y)$, 
\begin{equation*}
h(y) - h(-r_{\delta}) = \int_{-r_{\delta}}^y 
h'(z) dz =
\int_{-r_{\delta}}^y \bigl( z - \bar y \bigr)  \frac{r_t -r_\delta}{r_t r_\delta} 
dz \leq 
-
\frac{r_t -r_\delta}{2 r_{t} r_{\delta}} (r_{\delta}+ y)^2,
\end{equation*}
and using in addition the fact that $h$ is decreasing on $(y_{1}^*,\bar y)$, 
\begin{equation*}
\begin{split}
\int_{-r_{\delta}}^{\bar y} \vert q(y) \vert \exp \bigl( \lambda h(y) \bigr) dy 
\leq 
\exp \bigl( \lambda h(-r_{\delta})\bigr) 
 \sqrt{\frac{2 \pi r_{t}
r_{\delta}}{\lambda (r_{t} - r_{\delta})}}
\leq \exp \bigl( \lambda h(y_{1}^*)\bigr) 
 \sqrt{\frac{2 \pi r_{t}
r_{\delta}}{\lambda (r_{t} - r_{\delta})}}.
\end{split}
\end{equation*}
Now, 
\begin{equation*}
h(y^*_1) = h(y^*_2)  -2x, \quad h(r_{\delta}) = h(y_{2}^*)
+ \int_{y_{2}^*}^{r_{\delta}} \frac{ y_{2}^*-z}{r_{t}}
dz
=h(y_{2}^*)
 - \frac{1}{2r_{t}}{(r_{\delta} - y_{2}^*)^2}
= h(y_{2}^*) - B_{2}^2.
\end{equation*}
Hence,
\begin{equation*}
\int_{-r_{\delta}}^{r_{\delta}} \vert q(y) \vert \exp \bigl( \lambda h(y) \bigr) dy 
\leq \sqrt{\frac{2 \pi r_{t}r_{\delta}}{\lambda (r_{t} - r_{\delta})}} 
\exp \bigl( \lambda h(y_{2}^*) \bigr) \Bigl( \exp\bigl(-2 \lambda x \bigr) + \exp \bigl( -
\lambda B_{2}^2 \bigr) 
\Bigr).
\end{equation*}
Therefore, by 
\eqref{eq:I1:I2},
\begin{equation*}
\begin{split}
\vert {\mathcal I}_{2}(t,x,\lambda) \vert &\leq 
 \sqrt{\frac{2  
r_{\delta}}{(r_{t} - r_{\delta})}}
\Bigl( \exp\bigl(-2 \lambda x \bigr) + \exp \bigl( - \lambda B_{2}^2 \bigr) 
\Bigr).
\end{split}
\end{equation*}

\textit{Conclusion:}
Collecting the various terms, we obtain, for $x>0$, 
\begin{equation*}
\vert \Psi(t,x,\sigma_{0}) 
\vert \leq 
\Bigl( 4
+ 2 \sqrt{\frac{ 
r_{\delta}}{(r_{t} - r_{\delta})}}
\Bigr)
 \exp (-2 \lambda x )
 + 
  \frac{2 \sqrt{2}}{\sqrt{\lambda \pi  r_{t}}}
  +
2   \sqrt{\frac{r_{\delta}}{(r_{t} - r_{\delta})}}
\exp 
\Bigl( - \lambda
\frac{\bigl( r_{t}-r_{\delta}\bigr)^2}{2r_{t}}
\Bigr).  
\end{equation*}
By symmetry, we get the same result for $x<0$.

\section*{Acknowledgment}

Fran\c{c}ois Delarue and Rinel Foguen Tchuendom are partially supported by  
ANR MFG  (ANR-16-CE40-0015-01). Fran\c{c}ois Delarue is also partially supported
by Institut Universitaire de France.  

\bibliographystyle{plain}
\bibliography{main-rinel}
\vspace{5pt}

{\small {\sc Fran\c{c}ois DELARUE

Rinel FOGUEN TCHUENDOM

Laboratoire J.-A. Dieudonn\'e,

Universit\'e de Nice Sophia-Antipolis and UMR CNRS 7351, 

Parc Valrose, 06108 Nice Cedex 02, France}

\texttt{delarue@unice.fr}, 

\texttt{Rinel.Foguen$\_$tchuendom@unice.fr}}

\end{document}